\setlist{nosep}
\newenvironment{NDLR}{\em}{}
\newcommand{\mc}{\mathcal}
\newcommand{\ls}{\leqslant}
\newcommand{\gs}{\geqslant}
\newcommand{\mini}{\mathrm{min}}
\newcommand{\maxi}{\mathrm{max}}
\newcommand{\N}{\mathbb{N}}
\newcommand{\R}{\mathbb{R}}
\newcommand{\Sph}{\mathbb{S}}
\newcommand{\lto}{\longrightarrow}
\newcommand{\lmto}{\longmapsto}
\newcommand{\nec}{\Longrightarrow}
\newcommand{\cns}{\Longleftrightarrow}
\newcommand{\im}{\mathrm{im\,}}
\newcommand{\Span}{\mathrm{span}}
\newcommand{\tr}{\mathrm{tr}}
\newcommand{\rk}{\mathrm{rk}}
\newcommand{\sym}{\mathrm{sym}}
\renewcommand{\sp}{\mathrm{sp}}
\newcommand{\Diag}{\mathrm{Diag}}
\newcommand{\Mat}{\mathrm{Mat}}
\newcommand{\Sym}{\mathrm{Sym}}
\newcommand{\Skew}{\mathrm{Skew}}
\newcommand{\GL}{\mathrm{GL}}
\newcommand{\Orth}{\mathrm{O}}
\newcommand{\Cov}{\mathrm{Cov}}
\newcommand{\St}{\mathrm{St}}
\newcommand{\pow}{\mathrm{pow}}
\newcommand{\Stab}{\mathrm{Stab}}
\newcommand{\Orb}{\mathrm{Orb}}
\newcommand{\Exp}{\mathrm{Exp}}
\newcommand{\Log}{\mathrm{Log}}
\newcommand{\fun}[4]{
\left\{
\begin{array}{ccc}
#1 & \lto & #2\\ \relax
#3 & \lmto & #4\\
\end{array}
\right.
}
\newcommand{\chixlemmaeucbwtoptitle}{Euclidean and Bures-Wasserstein topologies coincide}
\newcommand{\chixlemmaeucbwtop}{The Euclidean distance $d^{\mathrm{E}}$ and the Bures-Wasserstein distance $d^{\mathrm{BW}}$ define the same topology on $\Cov(n)$.}
\newcommand{\chixthmbwgeodsymntitle}{Bures-Wasserstein geodesics on $\Sym^+(n)$}
\newcommand{\chixthmhorliftmettitle}{Horizontal lift, tangent space, metric}
\newcommand{\chixthmhorliftmet}{Let $\Sigma\in\Sym^+(n,k)$, let $X\in\R^{n\times k}_*$ such that $\Sigma=XX^\top\in\Sym^+(n,k)$ and let $V\in T_\Sigma\Sym^+(n,k)$. Let $\Sigma=UDU^\top$ be a singular value decomposition with $D\in\Diag^+(k)$ and $U\in\St(n,k)$. We denote $S=S_{\Sigma,V}=U\mc{S}_D(U^\top VU)U^\top$, where $\mc{S}_A(B)$ denotes the unique solution $Z$ of the Sylvester equation $AZ+ZA=B$. Note that $S_{\Sigma,V}$ and $(I_n-UU^\top)$ are independent from the chosen decomposition.
\begin{enumerate}
    \item (Tangent space) $T_\Sigma\Sym^+(n,k)=\{V\in\Sym(n)|X_\perp^\top VX_\perp=0\}$,
    \item (Horizontal lift) $V^\#_X=X(X^\top X)^{-1}\mc{S}_{X^\top X}(X^\top VX)+X_\perp X_\perp^\top VX(X^\top X)^{-1}$, where $X_\perp X_\perp^\top=I_n-X(X^\top X)^{-1}X^\top$,
    \item (Bures-Wasserstein metric) $g_\Sigma^{BW(n,k)}(V,V)=\tr(S_{\Sigma,V}\Sigma S_{\Sigma,V}+V\Sigma^- V(I_n-UU^\top))$.
\end{enumerate}}
\newcommand{\chixthmbwgeodsymnktitle}{Bures-Wasserstein geodesics on $\Sym^+(n,k)$}
\newcommand{\chixthmmingeodtitle}{Bures-Wasserstein minimizing geodesics in $\Cov(n)$}
\newcommand{\chixthmmingeod}{Let $\Sigma,\Lambda\in\Cov(n)$ with $\rk(\Sigma)=k$ and $\rk(\Lambda)=l$. Let $X,Y\in\R^{n\times n}$ such that $XX^\top=\Sigma$ and $YY^\top=\Lambda$. The two following statements are equivalent:
\begin{enumerate}[label=(\roman*)]
    \item the curve $\gamma:[0,1]\lto\Cov(n)$ is a minimizing geodesic segment from $\Sigma$ to $\Lambda$,
    \item there exists $R\in\Orth(n)$ such that $H_{X,Y,R}:=X^\top YR^\top\in\Cov(n)$ and for all $t\in[0,1]$, $\gamma(t)=\gamma_{\Sigma\to\Lambda}^R(t)=(1-t)^2\Sigma+t^2\Lambda+2t(1-t)\sym(XRY^\top)$.
\end{enumerate}
Moreover, $H_{X,Y,R}=(X^\top\Lambda X)^{1/2}$ and the minimizing geodesic $\gamma_{\Sigma\to\Lambda}^R$ is of constant rank $p\gs\max(k,l)$ on $(0,1)$.}
\newcommand{\chixlemmaelemalgtitle}{Elementary algebra}
\newcommand{\chixlemmaelemalg}{Let $\Sigma,\Lambda\in\Cov(n)$ with $\rk(\Sigma)=k$ and $\rk(\Lambda)=l$. Let $r=\rk(\Sigma\Lambda)$.
\begin{enumerate}
    \itemsep0em
    \item For all $X,Y\in\Mat(n)$ such that $XX^\top=\Sigma$ and $YY^\top=\Lambda$, $r=\rk(X^\top Y)$.
    \item We have $l-r\ls n-k$.
\end{enumerate}}
\newcommand{\chixthmnbbwgeodtitle}{Number of Bures-Wasserstein minimizing geodesic segments in $\Cov(n)$}
\crefname{hypothesis}{Hypothesis}{Hypotheses}
\title{Bures-Wasserstein minimizing geodesics between covariance matrices of different ranks \thanks{Preprint, April 1st, 2022.}}
\author{Yann Thanwerdas\thanks{Université Côte d'Azur and Inria, Epione Project Team (\email{yann.thanwerdas@inria.fr}).}
\and Xavier Pennec\thanks{Université Côte d'Azur and Inria, Epione Project Team (\email{xavier.pennec@inria.fr}).}}
\begin{document}

\maketitle

\begin{abstract}
The set of covariance matrices equipped with the Bures-Wasserstein distance is the orbit space of the smooth, proper and isometric action of the orthogonal group on the Euclidean space of square matrices. This construction induces a natural orbit stratification on covariance matrices, which is exactly the stratification by the rank. Thus, the strata are the manifolds of symmetric positive semi-definite (PSD) matrices of fixed rank endowed with the Bures-Wasserstein Riemannian metric. In this work, we study the geodesics of the Bures-Wasserstein distance. Firstly, we complete the literature on geodesics in each stratum by clarifying the set of preimages of the exponential map and by specifying the injection domain. We also give explicit formulae of the horizontal lift, the exponential map and the Riemannian logarithms that were kept implicit in previous works. Secondly, we give the expression of all the minimizing geodesic segments joining two covariance matrices of any rank. More precisely, we show that the set of all minimizing geodesics between two covariance matrices $\Sigma$ and $\Lambda$ is parametrized by the closed unit ball of $\R^{(k-r)\times(l-r)}$ for the spectral norm, where $k,l,r$ are the respective ranks of $\Sigma,\Lambda,\Sigma\Lambda$. In particular, the minimizing geodesic is unique if and only if $r=\min(k,l)$. Otherwise, there are infinitely many. 
\end{abstract}

\begin{keywords}
Covariance matrices, PSD matrices, Bures-Wasserstein, orbit space, geodesics, injection domain
\end{keywords}

\begin{AMS}
15B48, 15A63, 53B20, 53C22, 58D17, 53-08, 53A04, 54E50, 58A35.
\end{AMS}

\section{Introduction}\label{sec:bures-wasserstein_introduction}

Many data can be represented as covariance matrices. They are often assumed to be Symmetric Positive Definite (SPD) because it is much more convenient from the geometric point of view. Indeed, the set of SPD matrices is an open convex cone in the vector space of symmetric matrices so it has a canonical differential structure. The induced Euclidean metric is not satisfying to compute with SPD matrices because geodesics leave the space in finite time and interpolations are often non-realistic. To solve this problem, a lot of Riemannian metrics were proposed on SPD matrices, mainly $\Orth(n)$-invariant metrics \cite{Thanwerdas22-LAA} (affine-invariant \cite{Siegel43,Skovgaard84,Pennec06,Lenglet06-JMIV,Fletcher07,Moakher05}, log-Euclidean \cite{Arsigny06,Fillard07,HaQuang14}, Bures-Wasserstein \cite{Dowson82,Olkin82,Takatsu10,Takatsu11,Malago18,Bhatia19}, Bogoliubov-Kubo-Mori \cite{Petz93,Michor00}, etc.), Cholesky-like metrics \cite{Li17,Lin19} or product metrics with one metric on positive diagonal matrices and one metric on full-rank correlation matrices \cite{Thanwerdas21-GSI,Thanwerdas22-SIMAX}.

However, this viewpoint often forgets about singular covariance matrices, that is covariance matrices with non-full rank. Altogether, they form a closed convex cone which is not anymore a manifold. First, it can be equipped with distances to provide a metric space structure. The Euclidean distance is not satisfying either here because geodesics leave the closed cone in finite time again. The main alternative is the Bures-Wasserstein distance that was introduced in many different ways. For example in \cite{Dowson82,Olkin82}, it is the Wasserstein/Kantorovitch distance between multivariate centered Gaussian distributions, possibly degenerate. It is also the quotient distance of the Euclidean metric on square matrices by the right action of the orthogonal group. This is why it was also called the Procrustes distance \cite{Dryden09,HaQuang22}. This viewpoint allows to split the closed cone into strata that are Riemannian manifolds whose induced geodesic distance is precisely the Bures-Wasserstein distance. In particular, the space of covariance matrices equipped with this distance is a complete geodesic metric space.

The geometry of stratified spaces is a topic of interest in the community of statistics in non-linear spaces. Examples of popular stratified spaces are the Kendall shape spaces \cite{Kendall84}, the BHV space of trees \cite{Billera01}, the QED space of trees \cite{Feragen11}, the Graph space \cite{Calissano20}, the Wald space of forests \cite{Garba21}, the correlation matrices or the symmetric/diagonal matrices stratified by eigenvalue multiplicity. Moreover, the space of covariance matrices with the Bures-Wasserstein distance is a metric space of non-negative curvature \cite{Takatsu11}. Spaces of this type have been much less described than metric spaces of non-positive curvature \cite{Bridson99}. These are two motivations to study the Bures-Wasserstein geometry of covariance matrices. In this work, we focus on geodesics.

We would like to highlight three important elements that depart from usual studies on the geometry of covariance matrices. Firstly, contrarily to the affine-invariant, the log-Euclidean or the log-Cholesky metrics, the Bures-Wasserstein metric on SPD matrices (or on PSD matrices of fixed rank) is not geodesically complete. Therefore in this work, we need the notion of \textit{definition domain} of the exponential map because it is not $\R$ in general. Secondly, in the cited examples, the geodesics are globally minimizing on $\R$. It is not the case for the Bures-Wasserstein metric so we need the notions of \textit{cut time} and \textit{injection domain} to specify when the geodesic stops to be minimizing. Thirdly, in this work we talk about \textit{geodesics in a metric space}, not only in a Riemannian manifold as previously. We recall the definition of these notions in Section \ref{sec:bures-wasserstein_concepts}.

In the principal (or regular) stratum of SPD matrices, the Bures-Wasserstein Riemannian metric was extensively studied. Since $X\in\GL(n)\lmto XX^\top\in\Sym^+(n)$ is a Riemannian submersion, many geometric operations can be computed thanks to O'Neill's equations \cite{ONeill66}. Therefore, the curvature was derived in \cite{Takatsu10,Takatsu11,Thanwerdas22-LAA}, the quotient geometry was described in \cite{Malago18,Bhatia19,Oostrum20}, the exponential map was computed in \cite{Malago18}, a Riemannian logarithm was given in \cite{Bhatia19}, the injectivity radius was computed in \cite{Massart20} and a simplified equation of the geodesic parallel transport was proposed in \cite{Thanwerdas22-LAA}. In contrast to geodesically complete Riemannian metrics, it is important to specify the definition domain of the exponential map. It was characterized in \cite{Malago18} as the connected component of 0 in a subset of $\R$, which could be specified more explicitly. Moreover, the uniqueness of the Riemannian logarithm is not established and the injection domain seems to be unknown. Therefore in Section \ref{sec:bures-wasserstein_sym^+(n)}, we clarify the definition domain of the exponential map, we prove the uniqueness of the logarithm thanks to a result from \cite{Massart20} and we prove that the geodesics are minimizing on their domain of definition, which also provides the injection domain.

In each other stratum of PSD matrices of fixed rank $k<n$, the Bures-Wasserstein Riemannian metric was studied via the analogous Riemannian submersion defined by $X\in\R^{n\times k}_*\lmto XX^\top\in\Sym^+(n,k)$, where $\R^{n\times k}_*$ is the open set of matrices of full rank $k$ in $\R^{n\times k}$ and $\Sym^+(n,k)$ is the set of PSD matrices of size $n$ and rank $k$. The curvature was computed in \cite{Massart19}, the exponential map, its domain of definition, the logarithm map and the injectivity radius were derived in \cite{Massart20}. The horizontal lift was kept implicit so these results are formulated in the total space $\R^{n\times k}_*$. We think that it is easier to understand the geometry with formulae depending directly on the tangent vector and not on its horizontal lift. Therefore in Section \ref{sec:bures-wasserstein_sym^+(n,k)}, we compute the horizontal lift and we give the expressions of the Riemannian metric, the exponential map and its definition domain in function of vectors tangent to the manifold $\Sym^+(n,k)$. Moreover, we notice that the solutions of the logarithm equation characterized in \cite{Massart20} may not be in the definition domain. Thus we explain the additional condition for being a preimage of the exponential map. In addition, we give an explicit bijective parametrization of the Riemannian logarithms, which allows us to count them. When it is unique, we give an explicit formula of the corresponding minimizing geodesic in function of the end points. This finally allows us to compute the injection domain which is kept implicit in \cite{Massart20}.

Beyond the clarification and completion of the literature on geodesics in each stratum, our main objective is to characterize the minimizing geodesics \textit{between} strata. Our main results are the following. (1) Any minimizing geodesic segment between two covariance matrices $\Sigma$ and $\Lambda$ is of constant rank on the interior of the segment. It is called the rank of the minimizing geodesic and it is greater than the ranks of $\Sigma$ and $\Lambda$. (2) We give the explicit formula of all the minimizing geodesic segments in Theorem \ref{thm:minimizing_geodesics}. (3) They are parametrized by the 
vectors of the closed unit ball of $\R^{(k-r)\times(l-r)}$ for the spectral norm, where $k,l,r$ are the respective ranks of $\Sigma,\Lambda,\Sigma\Lambda$. In other words, they are parametrized by matrices $R_0\in\R^{(k-r)\times(l-r)}$ with singular values in $[0,1]$. (4) The minimizing geodesic segment is unique if and only if $r=\min(k,l)$ (this includes $\max(k,l)=n$). Otherwise, there are infinitely many. (5) The number of minimizing geodesics of minimal rank (i.e. of rank equal to $\max(k,l)$) is 1 if $r=\min(k,l)$, otherwise it is 2 if $k=l$, otherwise it is infinite. (6) Assuming $k\gs l$, if $R_0$ belongs to the Stiefel manifold $\St(k-r,l-r)$, that is $R_0^\top R_0=I_{l-r}$, then the corresponding geodesic is of minimal rank. (7) The choice of parameter $R_0=0$ leads to the geodesic $\gamma_{\Sigma\to\Lambda}^0(t)=(1-t)^2\Sigma+t^2\Lambda+2t(1-t)\sym(\Sigma^{1/2}((\Sigma^{1/2}\Lambda\Sigma^{1/2})^{1/2})^-\Sigma^{1/2}\Lambda)$ whose expression does not depend on the ranks of $\Sigma$ and $\Lambda$. It is called the Bures-Wasserstein canonical geodesic.

In the remainder of this section, we introduce some matrix notations. In Section \ref{sec:bures-wasserstein_concepts}, we introduce the important concepts of geodesics and quotient space in metric spaces and manifolds. We give a particular attention to the notions that characterize if a geodesic (self-parallel curve) is minimizing: cut time, injection domain, difference between preimages of the exponential map and Riemannian logarithms. In Section \ref{sec:bures-wasserstein_geometry}, we recall the algebraic structure, metric topology and differential geometry of the convex cone of covariance matrices seen as the quotient of square matrices by the orthogonal group. In Sections \ref{sec:bures-wasserstein_sym^+(n)} and \ref{sec:bures-wasserstein_sym^+(n,k)}, we complete the literature on the geodesics of the Bures-Wasserstein metric on SPD matrices and on singular matrices of fixed rank respectively. In Section \ref{sec:bures-wasserstein_cov(n)}, we give our main results on minimizing geodesics in the whole Bures-Wasserstein metric space of covariance matrices. We conclude in Section \ref{sec:bures-wasserstein_conclusion}. The main proofs are deferred to the Appendix.\\

\paragraph{Matrix notations}

Let $n,k\in\N$. In this work, we use the following manifolds of matrices:
\begin{enumerate}[label=$\cdot$]
    \itemsep0em
    \item the vector space of $n\times k$ matrices $\R^{n\times k}$,
    \item the open subset $\R^{n\times k}_*\subset\R^{n\times k}$ of full-rank matrices,
    \item in particular, the vector space of square matrices $\Mat(n)=\R^{n\times n}$ and the general linear group $\GL(n)=\R^{n\times n}_*$,
    \item the orthogonal group $\Orth(n)$,
    \item the Stiefel manifold $\St(n,k)=\Orth(n)/\Orth(n-k)$,
    \item the manifold of symmetric positive definite matrices $\Sym^+(n)$,
    \item the manifold of symmetric positive semi-definite matrices of fixed rank $k$, $\Sym^+(n,k)$,
    \item the vector space of diagonal matrices $\Diag(n)$,
    \item the groups of invertible diagonal matrices $\Diag^*(n)=\Diag(n)\cap\GL(n)$ and positive diagonal matrices $\Diag^+(n)=\Diag(n)\cap\Sym^+(n)$.\\
\end{enumerate}

In Section \ref{sec:bures-wasserstein_cov(n)}, we need to distinguish cases where matrices may have one or two null dimensions, i.e. belonging to $\R^{n\times 0}$, $\R^{0\times k}$ or $\R^{0\times 0}$. Thus we recall that these spaces are isomorphic to the vector space $\{0\}$. Indeed, there is a unique linear map from $\R^n$ to $\R^0$ or from $\R^0$ to $\R^k$, which is the identically null map. The canonical basis of $\R^0$ is empty and the corresponding matrix in the canonical bases is called the empty matrix. It is practical to treat these spaces as non-trivial spaces to avoid writing particular cases. In particular, $\St(n,0)=\R^{n\times 0}$ and $\Orth(0)=\GL(0)=\Mat(0)=\Diag(0)$ are sets of cardinal 1.\\

We use the following notations.
\begin{enumerate}[label=$\cdot$]
    \itemsep0em
    \item $I_n$ denotes the identity matrix of size $n$.
    \item $\mathbf{0}_n$ denotes the null matrix of size $n\times n$. $\mathbf{0}_{n,k}$ denotes the null matrix of size $n\times k$. We may simply denote them $0$ when sizes are obvious in the context.
    \item (Sylvester equation) $\mc{S}_A(B)$ is the unique solution $Z$ of the Sylvester equation $AZ+ZA=B$ for $A\in\Sym^+(k)$ and $B\in\Sym(k)$.
    \item (Löwner order) For all $\Sigma\in\Sym(n)$, we say that $\Sigma$ is positive definite (resp. positive semi-definite) and we denote $\Sigma>0$ (resp. $\Sigma\gs 0$) when $\Sigma$ has positive (resp. non-negative) eigenvalues. Given $\Lambda\in\Sym(n)$, we denote $\Sigma>\Lambda$ (resp. $\Sigma\gs\Lambda$) when $\Sigma-\Lambda>0$ (resp. $\Sigma-\Lambda\gs 0$).\\
\end{enumerate}

We recall basic facts on symmetric matrices.
\begin{enumerate}
    \itemsep0em 
    \item (Eigenvalue decomposition) For all $\Sigma\in\Sym(n)$, there exist $U\in\Orth(n)$ and $D\in\Diag(n)$ such that $\Sigma=UDU^\top$. By removing null eigenvalues, given $r=\rk(\Sigma)$, there also exist $U\in\St(n,r)$ and $D\in\Diag^*(r)$ such that $\Sigma=UDU^\top$.
    
    \item For all $X\in\R^{n\times k}$, $XX^\top$ is symmetric positive semi-definite and $\rk(X)=\rk(XX^\top)=\rk(X^\top X)$. In particular, if $X\in\R^{n\times k}_*$, then $X^\top X\in\GL(k)$.
    
    \item (Singular value decomposition) For all $M\in\R^{n\times k}$, denoting $r=\rk(M)\ls\min(n,k)$, there exist $U\in\Orth(n)$, $V\in\Orth(k)$ and $D=\begin{pmatrix}D_r & \mathbf{0}_{r,k-r}\\\mathbf{0}_{n-r,r} & \mathbf{0}_{n-r,k-r}\end{pmatrix}$ with $D_r\in\Diag^+(r)$ such that $M=UDV^\top$. The diagonal entries of $D$, that is the $D_{ii}'s$ for $i\in\{1,...,\min(n,k)\}$ are called the singular values of $M$.
    
    \item (Moore-Penrose inverse) For all $M\in\R^{n\times k}$, the unique matrix $M^-\in\R^{k\times n}$ satisfying $MM^-M=M$, $M^-MM^-=M^-$, $MM^-\in\Sym(n)$ and $M^-M\in\Sym(k)$ is called the pseudoinverse or Moore-Penrose inverse. In this work, we only use it for symmetric matrices $\Sigma\in\Sym(k)$. Given $r=\rk(\Sigma)$ and an eigenvalue decomposition $\Sigma=UDU^\top=P\Diag(D,0)P^\top$ with $U\in\St(k,r)$, $P=[U~U_\perp]\in\Orth(k)$, $D\in\Diag^*(r)$, it is easy to check that $\Sigma^-=UD^{-1}U^\top=P\Diag(D^{-1},0)P^\top$.
    Moreover, for all $V\in\St(n,k)$, $(V\Sigma V^\top)^-=V\Sigma^-V^\top$.
    
    \item (Symmetric square root) For all $\Sigma\in\Sym(n)$, if $\Sigma\gs0$, then there exists a unique matrix $A\in\Sym(n)$, $A\gs 0$ such that $A^2=\Sigma$. It is called the (symmetric) square root of $\Sigma$ and it is denoted $\sqrt{\Sigma}$ or $\Sigma^{1/2}$.
    
    \item (Non-symmetric square roots) For all $X,Y\in\R^{n\times k}$, $XX^\top=YY^\top$ if and only if there exists $U\in\Orth(k)$ such that $XU=Y$. Indeed, if $Y=XU$, then $YY^\top=XX^\top$. Conversely, if $X=(x_1,...,x_n)^\top$ and $Y=(y_1,...,y_n)^\top$ are such that $YY^\top=XX^\top$, then we can define the map $\varphi:\Span(x_1,...,x_n)\lto\Span(y_1,...,y_n)$ by $\varphi(x_i)=y_i$, extended by linearity. Indeed, it is well defined because if $\sum_{i=1}^n\lambda_ix_i=0$ or equivalently $X^\top\lambda=0$, then $\|Y^\top\lambda\|^2=\lambda^\top YY^\top\lambda=\lambda^\top XX^\top\lambda=0$ and $Y^\top\lambda=\sum_{i=1}^n{\lambda_iyi}=0$. Moreover, $\varphi$ is an isometric map, thus it is injective. Since $\rk(X)=\rk(XX^\top)=\rk(YY^\top)=\rk(Y)$, it is bijective so it is an isometry between two subvector spaces of $\R^k$. According to Witt's theorem, $\varphi$ extends to an isometry of $\R^k$. Thus the matrix $U=[\varphi]^\top$ in the canonical basis of $\R^k$ satisfies $XU=Y$.\\
\end{enumerate}

We manipulate several norms on matrices. For $M\in\R^{n\times k}$ and $x\in\R^k$, we denote:
\begin{enumerate}
    \itemsep0em
    \item (Euclidean norm) $\|M\|_2=\tr(M^\top M)^{1/2}$ and $\|x\|_2=(x^\top x)^{1/2}$.
    \item (Infinite norm) $\|M\|_\infty=\max_{1\ls i,j\ls n}|M_{ij}|$ and $\|x\|_\infty=\max_{1\ls i\ls n}|x_i|$.
    \item (Spectral norm or Schatten's infinite norm) $\|M\|_{\mathrm{S}}=\sup_{\substack{x\in\R^k\\\|x\|_2\ls 1}}\|Mx\|_2=\|\sigma(M)\|_\infty$, where $\sigma(M)$ is the vector of singular values of $M$.
\end{enumerate}
Without index, $\|\cdot\|$ generically denotes the norm on the tangent spaces associated to the Riemannian metric at hand.

\section{Preliminary concepts}\label{sec:bures-wasserstein_concepts}

In this section, we recall the basic definitions of geodesics and quotient spaces. We depart from works on Riemannian metrics on SPD matrices such as the affine-invariant, the log-Euclidean or the log-Cholesky metrics which are geodesically complete and where the injection domain of the exponential map is the entire manifold. We also need to introduce the notions of length and geodesic in a metric space \cite{Bridson99,Paulin14} and the notion of Riemannian orbit space \cite{Alekseevsky03}.

In Section \ref{subsec:bures-wasserstein_geodesics}, we introduce the concepts needed to study geodesics and minimizing geodesics. We recall that geodesics are defined in a metric space, in a manifold endowed with an affine connection and in a Riemannian manifold, where the two previous notions coincide. Then we introduce our notations for preimages of the exponential map, logarithms and related notions. We define the cut time and the injection domain, extending the definition usually given in complete metric spaces, and we explain where to be cautious. In Section \ref{subsec:bures-wasserstein_quotient_spaces}, we recall the notions of quotient metric space, quotient Riemannian manifold and Riemannian orbit space. We introduce the vocabulary, notations and results that we use in the next sections.

\subsection{Geodesics}\label{subsec:bures-wasserstein_geodesics}

\begin{definition}[Curve]
Let $\mc{M}$ be a topological space. A curve on $\mc{M}$ is a continuous map $c:I\lto\mc{M}$, where $I$ is an interval of $\R$. When $I$ is a segment of $\R$, we may call $c$ a segment.
\end{definition}

\subsubsection{Geodesics in a metric space}\label{subsubsec:bures-wasserstein_geodesics_metric_space}

\begin{definition}[Length, length distance, length space]\cite{Paulin14}
Let $(\mc{M},d)$ be a metric space.
\begin{enumerate}
    \itemsep0em
    \item (Length) Let $c:[a,b]\subseteq I\lto\mc{M}$ be a curve. The length of $c$ is defined by $L(c)=\sup\sum_{k=0}^pd(c(t_k),c(t_{k+1}))\in[0,+\infty]$ over all subdivisions $a=t_0\ls t_1\ls...\ls t_p\ls t_{p+1}=b$. We say that $c:I\lto\mc{M}$ is rectifiable when for all $a\ls b$ in $I$, $L(c_{|[a,b]})$ is finite.
    \item (Length distance) The length distance between $x\in\mc{M}$ and $y\in\mc{M}$ is defined by $d_L(x,y)=\inf L(c)\gs d(x,y)$ over all rectifiable curves $c:[0,1]\lto\mc{M}$ from $x=c(0)$ to $y=c(1)$. If $\mc{M}$ is connected by rectifiable curves, then $d_L$ is a distance.
    \item (Length space) We say that $(\mc{M},d)$ is a length space when $d=d_L$.
\end{enumerate}
\end{definition}

\begin{lemma}[Length is additive and continuous]\cite[Proposition 1.20]{Bridson99}\label{lemma:length}
Let $c:[a,b]\lto\mc{M}$ be a rectifiable curve.
\begin{enumerate}
    \itemsep0em
    \item For all $t\in[a,b]$, $L(c)=L(c_{|[a,t]})+L(c_{|[t,b]})$.
    \item The map $f:t\in[a,b]\lmto L(c_{|[a,t]})$ is non-decreasing and continuous.
\end{enumerate}
\end{lemma}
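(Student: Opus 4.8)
The plan is to derive both statements from the two elementary facts that govern the supremum defining $L$: first, that refining a subdivision never decreases the associated sum—a direct consequence of the triangle inequality, since replacing $d(c(s),c(u))$ by $d(c(s),c(v))+d(c(v),c(u))$ for $s\ls v\ls u$ can only increase it—and second, that a subdivision of $[a,t]$ glued to a subdivision of $[t,b]$ at the common point $t$ is exactly a subdivision of $[a,b]$, with associated sum equal to the sum of the two partial sums.

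For the additivity of item 1, I would prove the two inequalities separately. To get $L(c)\gs L(c_{|[a,t]})+L(c_{|[t,b]})$, I take arbitrary subdivisions of $[a,t]$ and of $[t,b]$, concatenate them at $t$ into a subdivision of $[a,b]$, note that its sum equals the sum of the two partial sums and is bounded by $L(c)$, and then take the supremum over the two subdivisions independently. For the reverse inequality, I start from an arbitrary subdivision of $[a,b]$, insert the point $t$ if it is not already present (which does not decrease the sum), split the refined subdivision at $t$ into a subdivision of $[a,t]$ and one of $[t,b]$, so that its sum is at most $L(c_{|[a,t]})+L(c_{|[t,b]})$, and take the supremum over subdivisions of $[a,b]$.

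Item 2 splits into monotonicity and continuity. Monotonicity is immediate from item 1: for $s\ls t$ in $[a,b]$, applying item 1 to $[a,t]$ split at $s$ gives $f(t)=L(c_{|[a,t]})=L(c_{|[a,s]})+L(c_{|[s,t]})=f(s)+L(c_{|[s,t]})\gs f(s)$, since lengths are non-negative; this also records the identity $f(t)-f(s)=L(c_{|[s,t]})$ that drives the continuity argument.

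The hard part will be continuity, and it is the only place where the continuity of $c$ (not merely its rectifiability) together with the finiteness of $L$ must both be used. Since $f$ is non-decreasing and bounded, its one-sided limits exist, so it suffices to rule out jumps, i.e. to show $\lim_{s\to t^+}L(c_{|[t,s]})=0$ at each $t\in[a,b)$ and the mirror statement from the left at each $t\in(a,b]$. For right-continuity I would fix $\epsilon>0$ and, using $L(c_{|[t,b]})<+\infty$, choose a near-optimal subdivision $t=u_0<u_1<\dots<u_p=b$ of $[t,b]$ with $\sum_{i=0}^{p-1}d(c(u_i),c(u_{i+1}))>L(c_{|[t,b]})-\epsilon/2$. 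By continuity of $c$ at $t$, there is $\delta\in(0,u_1-t)$ such that $d(c(s),c(t))<\epsilon/2$ whenever $|s-t|<\delta$. For such $s>t$, the subdivision $s,u_1,\dots,u_p$ of $[s,b]$ together with $d(c(s),c(u_1))\gs d(c(t),c(u_1))-d(c(t),c(s))>d(c(t),c(u_1))-\epsilon/2$ gives $L(c_{|[s,b]})>\sum_{i=0}^{p-1}d(c(u_i),c(u_{i+1}))-\epsilon/2>L(c_{|[t,b]})-\epsilon$; then item 1 yields $L(c_{|[t,s]})=L(c_{|[t,b]})-L(c_{|[s,b]})<\epsilon$, which is precisely $f(s)-f(t)<\epsilon$. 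The left-continuity follows from the symmetric argument on $[a,t]$, and combining the one-sided statements (with the endpoints $t=a$ and $t=b$ requiring only the relevant one-sided limit) establishes continuity of $f$ on $[a,b]$.
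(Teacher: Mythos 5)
Your proof is correct. Note that the paper does not prove this lemma at all---it is quoted directly from Bridson--Haefliger (Proposition 1.20 of \cite{Bridson99})---and your argument is essentially the standard textbook proof found there: additivity via refinement and concatenation of subdivisions, monotonicity from additivity, and continuity by using a near-optimal subdivision of $[t,b]$ together with the continuity of $c$ at $t$ to bound $L(c_{|[t,s]})=f(s)-f(t)$, where the finiteness of $L$ (rectifiability) is what makes the near-optimal subdivision and the jump-ruling argument available. The only cosmetic caveat is that the paper's subdivisions allow non-strict inequalities $t_k\ls t_{k+1}$, so when you write $t=u_0<u_1<\dots<u_p=b$ you should note that repeated points contribute nothing to the sum and may be discarded without loss of generality.
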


A classical example is the sphere $\Sph^2\subset\R^3$ endowed with the Euclidean distance $d$. The distances between the north pole $N=(0,0,1)$ and the south pole $S=(0,0,-1)$ are $d(N,S)=2$ and $d_L(N,S)=\pi$.

\begin{definition}[Geodesics in a metric space]\cite{Paulin14}
Let $(\mc{M},d)$ be a metric space.
\begin{enumerate}
    \itemsep0em
    \item (Constant speed) We say that $c:I\lto\mc{M}$ is a curve parameterized at constant speed when there exists $v\gs 0$ such that for all $t\ls t'$ in $I$, $L(c_{|[t,t']})=v(t'-t)$.
    \item (Unit speed) We say that $c:I\lto\mc{M}$ is a curve parameterized at unit speed (or by arc length) when for all $t\ls t'$ in $I$, $L(c_{|[t,t']})=t'-t$.
    \item (Globally minimizing) We say that $c:I\lto\mc{M}$ is (globally) minimizing when for all $a\ls b$ in $I$, $L(c_{|[a,b]})=d(c(a),c(b))$.
    \item (Locally minimizing) We say that $c:I\lto\mc{M}$ is locally minimizing when for all $t\in I$, there exists a neighborhood $I_0\subseteq I$ of $t$ such that $c_{|I_0}$ is minimizing.
    \item (Geodesic) A geodesic is a locally minimizing curve of constant speed. A minimizing geodesic is a globally minimizing curve of constant speed.
    \item (Geodesic space) A geodesic (metric) space is a length space such that there exists a minimizing geodesic between any two points.
\end{enumerate}
\end{definition}

\subsubsection{Geodesics of an affine connection}\label{subsubsec:bures-wasserstein_geodesics_affine_connection}

In the following notions, the definition interval and the definition domain are of interest when the manifold is not geodesically complete and the set of preimages is of interest when the exponential map is not injective.

\begin{definition}[Geodesics of an affine connection]\cite{Paulin14}
Let $(\mc{M},\nabla)$ be a smooth manifold equipped with an affine connection $\nabla:\Gamma(T\mc{M})\times\Gamma(T\mc{M})\lto\Gamma(T\mc{M})$.
\begin{enumerate}
    \item (Geodesic) A geodesic (or self-parallel curve) is a solution $\gamma:I\lto\mc{M}$ of the second-order equation $\forall t\in I,\nabla_{\dot\gamma(t)}\dot\gamma=0$. The maximal solution satisfying the initial condition $\dot\gamma(0)=v$ for $v\in T_x\mc{M}$ is denoted $\gamma_{(x,v)}:I_{x,v}\lto\mc{M}$.
    
    \item (Definition interval) We call $I_{x,v}\subseteq\R$ the definition interval of $\gamma_{(x,v)}$. It is the maximal interval of $\R$ on which the geodesic $\gamma_{(x,v)}$ is defined.
    
    In a geodesically complete manifold, $I_{x,v}=\R$.
    
    \item (Exponential map) The exponential map is defined by $\Exp:v\lmto\gamma_{(x,v)}(1)$ on the open set $\bigcup_{x\in\mc{M}}\{v\in T_x\mc{M}|\,1\in I_{x,v}\}\subseteq T\mc{M}$. The exponential map at $x$, defined by $\Exp_x(v)=\Exp(v)$ for $v\in T_x\mc{M}$ such that $1\in I_{x,v}$, is a diffeomorphism from a neighborhood of $0$ in $T_x\mc{M}$ to a neighborhood of $x$ in $\mc{M}$.
    
    \item (Definition domain) The definition domain of the exponential map if $\mc{D}_x=\{v\in T_x\mc{M}|\,1\in I_{x,v}\}$.
    
    \item (Preimage) A preimage of $y\in\mc{M}$ by $\Exp_x$ is a vector $v\in T_x\mc{M}$ such that $1\in I_{x,v}$ and $\Exp_x(v)=y$. The set of preimages of $y$ from $x$ is denoted $\mc{P}re_x(y)=\Exp_x^{-1}(\{y\})=\{v\in T_x\mc{M}|\,1\in I_{x,v}\mathrm{~and~}\Exp_x(v)=y\}$.
    
    Note that there might be none, one, several or infinitely many elements in $\mc{P}re_x(y)$.
    
    \item (Geodesic from $x$ to $y$) A geodesic from $x$ to $y$ is a geodesic $\gamma_{(x,v)}$ such that $v\in\mc{P}re_x(y)$. They are bijectively indexed by $\mc{P}re_x(y)$ so we denote them $\gamma_{x\to y}^v=\gamma_{(x,v)}$ for $v\in\mc{P}re_x(y)$.
    
    When there exists a unique preimage of $y$ from $x$, we simply denote the geodesic $\gamma_{x\to y}$.
\end{enumerate}
\end{definition}

\subsubsection{Geodesics of a Riemannian metric}\label{subsubsec:bures-wasserstein_geodesics_riemannian}

Any Riemannian manifold is equipped with a natural affine connection called the Levi-Civita connection. Moreover, we recall in the following definition that a connected Riemannian manifold is a metric space and even a length space.

\begin{definition}[Length, Riemannian distance]\cite{Paulin14}
Let $(\mc{M},g)$ be a Riemannian manifold. For $v\in T\mc{M}$, we denote its norm $\|v\|=\sqrt{g(v,v)}$.
\begin{enumerate}
    \item (Length) The length of a $\mc{C}^1$ curve $c:[a,b]\lto\mc{M}$ is defined by $L(c)=\int_0^1{\|\dot{c}(t)\|dt}$. This definition extends to piecewise $\mc{C}^1$ curves. The length is independent from the parametrization of the curve.
    \item (Distance) The Riemannian distance between $x,y\in\mc{M}$ is defined by $d(x,y)=\inf L(c)$ over all piecewise $\mc{C}^1$ curves $c:[0,1]\lto\mc{M}$ from $x=c(0)$ to $y=c(1)$. If $\mc{M}$ is connected, then the Riemannian distance is a distance (defining the topology of $\mc{M}$), the Riemannian length and the metric length coincide on piecewise $\mc{C}^1$ curves, and $(\mc{M},d)$ is a length space.
\end{enumerate}
\end{definition}

Fortunately, the two notions of geodesics coincide: a piecewise $\mc{C}^1$ curve is a geodesic for the Levi-Civita connection if and only if it is a geodesic for the Riemannian distance. Then, it is natural to ask what are the \textit{globally} minimizing geodesics. A few concepts can be introduced to formalize this question: injectivity radius, cut time, cut locus, injection domain. Moreover, among the preimages $v\in\mc{P}re_x(y)$ of the exponential map of the Levi-Civita connection, those which satisfy $\|v\|=d(x,y)$ are called Riemannian logarithms. When there exists a unique Riemannian logarithm, the logarithm map can be defined. 

\begin{definition}[Geodesics of a Riemannian metric]
Let $(\mc{M},g)$ be a connected Riemannian manifold. Let $d$ be the Riemannian distance and $\nabla$ be the Levi-Civita connection. We denote $\mc{B}_x(\varepsilon)=\{v\in T_x\mc{M}|\|v\|<\varepsilon\}\subset T_x\mc{M}$ the centered open ball of $T_x\mc{M}$ of radius $\varepsilon>0$.
\begin{enumerate}
    \item (Geodesics) \cite[Proposition 3.14]{Paulin14} A curve $\gamma:I\lto\mc{M}$ is a geodesic of $(\mc{M},d^g)$ (locally length-minimizing curve of constant speed) if and only if it is a geodesic of $(\mc{M},\nabla^g)$ (self-parallel curve). Such a curve is called a geodesic of $(\mc{M},g)$.
    
    \item (Injectivity radius) \cite{Paulin14} The injectivity radius at $x\in\mc{M}$ is defined by $\mathrm{inj}(x)=\sup\varepsilon$ over all $\varepsilon>0$ such that $\exp_x$ is a diffeomorphism from $\mc{B}_x(\varepsilon)\subset T_x\mc{M}$ to its image. The injectivity ball is $\mc{B}_x(\mathrm{inj}(x))\subset T_x\mc{M}$.
    
    For $v\in T_x\mc{M}$ of norm 1 and $t_0\in[0,\mathrm{inj}(x))$, the map $\gamma_{(x,v)}:t\in[0,t_0]\lmto\exp_x(tv)\in\mc{M}$ is the unique geodesic between $x$ and $\gamma_{(x,v)}(t_0)$. It is globally minimizing.
    
    The injectivity radius of $\mc{M}$ is defined by $\mathrm{inj}(\mc{M})=\inf_{x\in\mc{M}}\mathrm{inj}(x)$.
    
    \item (Cut time) The cut time at $x\in\mc{M}$ in the direction $v\in T_x\mc{M}$, $\|v\|=1$, is defined by $t_{cut}(x,v)=\sup\{t\in I_{x,v}|d(x,\Exp_x(tv))=t\}\in(0,+\infty]$. Note that the geodesic $\gamma_{(x,v)}$ need not be minimizing on $(-t_{cut}(x,-v),t_{cut}(x,v))$.
    
    \item (Tangential cut locus) The tangential cut locus at $x$ is the set $TCL(x)=\{t_{cut}(x,v)v|\,v\in T_x\mc{M},\|v\|=1,t_{cut}(x,v)<+\infty\}$.
    
    \item (Injection domain) The injection domain of the exponential map at $x$ is the set $\mathrm{Inj}(x)=\{tv|t\in[0,t_{cut}(x,v)),v\in T_x\mc{M},\|v\|=1\}\subseteq\mc{D}_x$. The injection ball is included in the injection domain.
    
    \item (Logarithms) We call (Riemannian) logarithm of $y\in\mc{M}$ from $x\in\mc{M}$ a preimage $v\in\mc{P}re_x(y)\subseteq T_x\mc{M}$ of $y$ from $x$ by the exponential map such that $\|v\|=d(x,y)$. We denote $\mc{L}og_x(y)\subseteq\mc{P}re_x(y)$ the set of logarithms of $y$ from $x$. In particular, the geodesic $\gamma_{x\to y}^v$ joining $x$ to $y$ with initial speed $v\in\mc{L}og_x$ is minimizing on $[0,1]$.
    
    \item (Logarithm map) Denoting $\mc{U}_x\subseteq\mc{M}$ the subset of points $y$ such that there exists a unique logarithm of $y$ from $x$, this defines a map $\Log_x:\mc{U}_x\lto T_x\mc{M}$. In particular, $\mc{B}_x(\mathrm{inj}(x))\subseteq\mc{U}_x$ and $\Log_x:\Exp_x(\mc{B}_x(\varepsilon))\lto \mc{B}_x(\varepsilon)$ is a diffeomorphism for all $\varepsilon<\mathrm{inj}(x)$.
\end{enumerate}
\end{definition}

\begin{remark}
The cut time is usually defined in complete manifolds only \cite{DoCarmo92}. Although the definition still holds, some basic results may fail in non-complete manifolds. For example, if $I_{x,v}\ne\R$ and $t_{cut}(x,v)<+\infty$, then $t_{cut}(x,v)$ need not belong to $I_{x,v}$ and $\Exp_x(t_{cut}(x,v)v)$ need not be defined. Then the cut locus, which is the image of the tangential cut locus by the exponential map in complete manifolds, should be defined differently. One possible definition could simply forget the vectors $v\in T_x\mc{M}$ such that $t_{cut}(x,v)\notin I_{x,v}$ as well as the definition in complete manifolds forgets about the vectors $v\in T_x\mc{M}$ such that $t_{cut}(x,v)=+\infty$. A maybe more satisfying definition could rely on the metric completion of the space: the cut point in direction $v$ could be the limit of $\Exp_x(tv)$ when $t$ tends to $t_{cut}(x,v)$ if this limit exists.
\end{remark}

Studying the geodesics of a space means (at least) determining precisely an expression of the geodesic $\gamma_{(x,v)}$, the maximal domain $I_{x,v}$, the injectivity radiuses $\mathrm{inj}(x)$ and $\mathrm{inj}(\mc{M})$, the cut time $t_{cut}(x,v)$, the preimages of the exponential map $\mc{P}re_x(y)$, an expression of the geodesics $\gamma_{x\to y}^v$, the Riemannian logarithms $\mc{L}og_x(y)$, the logarithm map $\Log_x$ and its definition domain $\mc{U}_x$.

The goal of Sections \ref{sec:bures-wasserstein_sym^+(n)} and \ref{sec:bures-wasserstein_sym^+(n,k)} is to clarify and complete the knowledge on geodesics of the Bures-Wasserstein metric on the manifolds $\Sym^+(n)$ and $\Sym^+(n,k)$.

\subsection{Quotient spaces}\label{subsec:bures-wasserstein_quotient_spaces}
\subsubsection{Quotient distance}\label{subsubsec:bures-wasserstein_quotient_distance}

\begin{definition}[Quotient distance]
Let $(\mc{M},d)$ be a metric space. Let $G$ be a group acting isometrically on $(\mc{M},d)$. We denote $\mc{M}^0=\mc{M}/G$ and we define the map $d^0:(Gx,Gy)\in\mc{M}^0\times\mc{M}^0\lmto d(Gx,Gy)=\inf_{g\in G}d(gx,y)\in[0,+\infty)$. If the orbits are closed, then $d_0$ is a distance on $\mc{M}^0$ called the quotient distance.
\end{definition}

\begin{remark}
There exists a more general notion of quotient pseudo-distance when the action is not isometric. In this work, the actions are isometric so we don't detail it.
\end{remark}

\begin{remark}
The hypotheses of the definition are satisfied if the following conditions hold together:
\begin{enumerate}[label=$\cdot$]
    \itemsep0em
    \item $G$ is a locally compact topological group (e.g. a Lie group),
    \item $(\mc{M},d)$ is a metric space (e.g. a Riemannian manifold),
    \item the action of $G$ on $(\mc{M},d)$ is continuous and isometric,
    \item the action is proper, i.e. (with the previous assumptions \cite[III.4.4 Proposition 7]{Bourbaki71}) for all $x,y\in\mc{M}$, there exist respective neighborhoods $\mc{V}_x,\mc{V}_y$ such that the set $K=\{g\in G|g\mc{V}_x\cap \mc{V}_y\ne\emptyset\}$ is relatively compact.
\end{enumerate}
Indeed, let $(g_nx)_{n\in\N}$ be a sequence in $Gx$ tending to $y\in\mc{M}$. Then there exists $n_0\in\N$ such that for all $n\gs n_0$, $g_nx\in\mc{V}_y$ so $g_n\mc{V}_x\cap\mc{V}_y\ne\emptyset$, i.e. $g_n\in K$. Since $\mc{M}$ is a metric space and $K$ is relatively compact, there exists a subsequence $(g_{\varphi(n)})$ converging to $g\in G$. Since the action is continuous, $g_{\varphi(n)}x$ converges to $gx$. Therefore, $y=gx\in Gx$ and $Gx$ is sequentially closed, hence closed.
\end{remark}

\begin{definition}[Registered points]
We say that $x,y\in\mc{M}$ are registered points when $d(x,y)=d^0(Gx,Gy)$.
\end{definition}

\begin{lemma}[Length in a quotient metric space]\label{lemma:length_quotient}
We denote $L$ the length on both $\mc{M}$ and $\mc{M}^0=\mc{M}/G$, 
and $\pi:\mc{M}\lto\mc{M}^0$ the canonical projection. For all curve $c:[0,1]\lto\mc{M}$, we have $L(\pi\circ c)\ls L(c)$. In particular, if $x=c(0)$ and $y=c(1)$ are registered and if $c$ is minimizing, then $\pi\circ c$ is minimizing and $L(\pi\circ c)=L(c)=d(x,y)=d^0(\pi(x),\pi(y))$.
\end{lemma}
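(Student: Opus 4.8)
The plan is to prove the two assertions of Lemma \ref{lemma:length_quotient} in turn, relying only on the definition of length as a supremum of sums of distances and on the fact that the projection $\pi$ is $1$-Lipschitz. First I would establish the inequality $L(\pi\circ c)\ls L(c)$. For any subdivision $0=t_0\ls t_1\ls\dots\ls t_{p+1}=1$ of $[0,1]$, the key observation is that $\pi$ does not increase distances: for all $x,y\in\mc{M}$ we have $d^0(\pi(x),\pi(y))=\inf_{g\in G}d(gx,y)\ls d(x,y)$, taking $g=e$ the identity. Applying this pointwise to each consecutive pair $c(t_k),c(t_{k+1})$ gives
\begin{equation*}
\sum_{k=0}^p d^0(\pi(c(t_k)),\pi(c(t_{k+1})))\ls\sum_{k=0}^p d(c(t_k),c(t_{k+1})).
\end{equation*}
Taking the supremum over all subdivisions on the left-hand side (noting that $\pi\circ c$ is a curve on $\mc{M}^0$ since $\pi$ is continuous) yields $L(\pi\circ c)\ls L(c)$.

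Next I would prove the second assertion under the hypotheses that $x=c(0)$ and $y=c(1)$ are registered and that $c$ is minimizing in $\mc{M}$. Being minimizing means $L(c)=d(x,y)$, and being registered means $d(x,y)=d^0(\pi(x),\pi(y))$. Combining these with the inequality just proved gives the chain
\begin{equation*}
d^0(\pi(x),\pi(y))\ls L(\pi\circ c)\ls L(c)=d(x,y)=d^0(\pi(x),\pi(y)),
\end{equation*}
where the leftmost inequality is simply the general lower bound $d^0_L\gs d^0$ of the length distance over the metric $d^0$, applied to the curve $\pi\circ c$ joining $\pi(x)$ to $\pi(y)$. Since both ends of the chain coincide, all inequalities are equalities, so $L(\pi\circ c)=L(c)=d(x,y)=d^0(\pi(x),\pi(y))$, and in particular $L(\pi\circ c)=d^0(\pi(x),\pi(y))$ shows that $\pi\circ c$ is minimizing.

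The proof is essentially a short two-line argument, so there is no serious obstacle; the only point requiring a little care is the very first inequality $d^0(\pi(x),\pi(y))\ls d(x,y)$, which is where the isometric action enters implicitly (through the definition of $d^0$ as an infimum over the orbit) and which guarantees that $\pi$ is $1$-Lipschitz. One should also verify that $\pi\circ c$ is genuinely a curve in $\mc{M}^0$, i.e. continuous, so that its length is defined; this follows from the continuity of $\pi$. I would keep the argument at this level of brevity, since the content is entirely the interplay between the supremum defining $L$, the infimum defining $d^0$, and the hypothesis of being registered.
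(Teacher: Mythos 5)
Your proof is correct and follows essentially the same route as the paper's: the pointwise bound $d^0(\pi(x),\pi(y))\ls d(x,y)$ summed over subdivisions gives $L(\pi\circ c)\ls L(c)$, and the chain of inequalities closed by the registered and minimizing hypotheses gives the equalities. The only (shared) elision is that concluding $\pi\circ c$ is minimizing on every subinterval from the endpoint equality uses additivity of length, which the paper also leaves implicit.
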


\begin{proof}
By definition $L(\pi\circ c)\ls\sup\sum_{k=0}^p\underset{\ls d(c(t_k),c(t_{k+1}))}{\underbrace{d^0(\pi(c(t_k)),\pi(c(t_{k+1})))}}\ls L(c)$. If $x=c(0)$ and $y=c(1)$ are registered, then $d(x,y)=d^0(\pi(x),\pi(y))\ls L(\pi\circ c)$. If $c$ is minimizing, then $d(x,y)=L(c)\gs L(\pi\circ c)$. Thus if both hold, then $L(\pi\circ c)=L(c)=d(x,y)=d^0(\pi(x),\pi(y))$ and $\pi\circ c$ is minimizing.
\end{proof}

\subsubsection{Quotient Riemannian metric}\label{subsubsec:bures-wasserstein_quotient_riemannian_metric}

\begin{definition}[Quotient metric]
Let $(\mc{M},g)$ be a Riemannian manifold. Let $G$ be a Lie group acting smoothly, properly, freely and isometrically on $(\mc{M},g)$. Then there exists a unique smooth structure on $\mc{M}^0=\mc{M}/G$ such that the quotient map $\pi:\mc{M}\lto\mc{M}^0$ is a submersion \cite{Lee12}. Thus, one can define for all $x\in\mc{M}$:
\begin{enumerate}
    \itemsep0em
    \item (Vertical space) $\mc{V}_x=T_x\mc{M}_x=\ker d_x\pi$ where $\mc{M}_x=\pi^{-1}(x)$ is a submanifold of $\mc{M}$,
    \item (Horizontal space) $\mc{H}_x=\mc{V}_x^\perp$ so that $T_x\mc{M}=\mc{V}_x\oplus\mc{H}_x$,
    \item (Horizontal lift) $\#_x:T_{\pi(x)}\mc{M}^0\lto\mc{H}_x$ the inverse isomorphism of $(d_x\pi)_{|\mc{H}_x}:\mc{H}_x\lto T_{\pi(x)}\mc{M}^0$,
    \item (Quotient metric) $g^0_{\pi(x)}:(v,w)\in T_{\pi(x)}\mc{M}^0\times T_{\pi(x)}\mc{M}^0\lmto g_x(v^\#_x,w^\#_x)\in\R$.
\end{enumerate}
This is a particular case of a Riemannian submersion \cite{ONeill66}. The Riemannian distance of the quotient metric $g^0$ is the quotient distance of $d$, defined by $d^0(\pi(x),\pi(y))=d(Gx,Gy)=\inf_{g\in G}d(gx,y)$.
\end{definition}

The fundamental theorem on geodesics of a quotient metric is the following.

\begin{theorem}[Geodesics of a quotient metric]\label{thm:geodesics_quotient}\cite{ONeill66}
The projection of a horizontal geodesic is a horizontal geodesic and their lengths coincide on any segment. More precisely, let $x\in\mc{M}$ and $v\in T_{\pi(x)}\mc{M}^0$. Then $I_{x,v^\#_x}\subseteq I_{\pi(x),v}$ and for all $t\in I_{x,v^\#_x}$, $\Exp_{\pi(x)}(tv)=\pi(\Exp_x(tv^\#_x))$.
\end{theorem}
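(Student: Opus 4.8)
The plan is to recover O'Neill's theorem in three stages: (i) a geodesic of $\mc{M}$ whose initial velocity is horizontal stays horizontal; (ii) the projection by $\pi$ of such a horizontal geodesic is a geodesic of $\mc{M}^0$ with the same length on every segment; (iii) a comparison of initial conditions together with the maximality of geodesics yields the inclusion $I_{x,v^\#_x}\subseteq I_{\pi(x),v}$ and the exponential formula.

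For stage (i), the key is the isometric $G$-action. Since $G$ is a Lie group acting isometrically, each $\xi$ in its Lie algebra $\mf{g}$ generates a fundamental vector field $\xi^*$, with $\xi^*_p=\frac{d}{ds}\big|_{s=0}\exp(s\xi)\cdot p$, which is a Killing field. Because the action is free and the fibers $\mc{M}_x=Gx$ are precisely the orbits, the vectors $\{\xi^*_p\}_{\xi\in\mf{g}}$ span the vertical space $\mc{V}_p=\ker d_p\pi$ at every $p$. Now for any Killing field $K$ and any geodesic $\gamma$, the function $t\mapsto g(\dot\gamma(t),K(\gamma(t)))$ is constant: its derivative is $g(\nabla_{\dot\gamma}\dot\gamma,K)+g(\dot\gamma,\nabla_{\dot\gamma}K)$, where the first term vanishes because $\gamma$ is self-parallel and the second vanishes by the Killing equation $g(\nabla_Y K,Z)+g(\nabla_Z K,Y)=0$ taken at $Y=Z=\dot\gamma$. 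Applying this to $\gamma=\gamma_{(x,v^\#_x)}$ and to $K=\xi^*$ for every $\xi$: at $t=0$ we have $g(v^\#_x,\xi^*_x)=0$ since $v^\#_x$ is horizontal, hence $g(\dot\gamma(t),\xi^*_{\gamma(t)})=0$ for all $t\in I_{x,v^\#_x}$ and all $\xi$, so $\dot\gamma(t)$ stays orthogonal to $\mc{V}_{\gamma(t)}$, i.e. horizontal.

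For stage (ii), set $c=\pi\circ\gamma$. As $\pi$ is a Riemannian submersion, $d_p\pi$ restricts to a linear isometry $\mc{H}_p\to T_{\pi(p)}\mc{M}^0$; since $\dot\gamma(t)$ is horizontal this gives $\|\dot c(t)\|=\|\dot\gamma(t)\|$, whence $L(c_{|[a,b]})=L(\gamma_{|[a,b]})$ on every segment. To see that $c$ is a geodesic, I would invoke O'Neill's covariant-derivative relation: for horizontal lifts the horizontal component of $\nabla$ projects under $d\pi$ onto the Levi-Civita connection $\nabla^0$ of $g^0$, so along $\gamma$ one gets $\nabla^0_{\dot c}\dot c=d\pi(\hor(\nabla_{\dot\gamma}\dot\gamma))=d\pi(\hor(0))=0$, i.e. $c$ is self-parallel for $g^0$. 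Alternatively, one can argue metrically: a sufficiently short horizontal geodesic joins registered endpoints and is minimizing in $\mc{M}$, so by Lemma \ref{lemma:length_quotient} its projection is a minimizing constant-speed curve, hence a geodesic.

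Stage (iii) is then bookkeeping. We have $c(0)=\pi(x)$ and $\dot c(0)=d_x\pi(v^\#_x)=v$ by definition of the horizontal lift, so $c$ is a geodesic of $\mc{M}^0$ with initial data $(\pi(x),v)$ defined on all of $I_{x,v^\#_x}$. By uniqueness and maximality of $\gamma_{(\pi(x),v)}$ this forces $I_{x,v^\#_x}\subseteq I_{\pi(x),v}$ and $\gamma_{(\pi(x),v)}(t)=\pi(\gamma_{(x,v^\#_x)}(t))$ for $t\in I_{x,v^\#_x}$; evaluating and using the homogeneity $\gamma_{(x,tv^\#_x)}(1)=\gamma_{(x,v^\#_x)}(t)$ gives $\Exp_{\pi(x)}(tv)=\pi(\Exp_x(tv^\#_x))$. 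The main obstacle is stage (i): the nontrivial geometric input is that horizontality is preserved along geodesics, which is exactly where the isometry of the action (through the Killing conservation law) is essential; the remaining steps are the definition of the submersion metric and ODE uniqueness.
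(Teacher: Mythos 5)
The paper does not prove this statement itself---it is quoted directly from \cite{ONeill66}---so there is no internal proof to compare against; your reconstruction is correct and is the standard argument: the fundamental vector fields of the isometric action are Killing fields spanning the vertical distribution, conservation of $g(\dot\gamma,K)$ along a geodesic gives preservation of horizontality, the fact that $d_p\pi$ is a linear isometry on $\mc{H}_p$ together with O'Neill's connection identity gives that the projection is a geodesic of the same speed, and ODE uniqueness/maximality yields $I_{x,v^\#_x}\subseteq I_{\pi(x),v}$ and the exponential formula. The only step worth flagging is the use of the covariant-derivative relation along the curve, which strictly requires locally extending $\dot c$ to a vector field on $\mc{M}^0$ where it is nonzero (the case $v=0$ being trivial); this is routine, and your metric alternative via Lemma \ref{lemma:length_quotient} also works once one knows that short horizontal geodesics join registered points.
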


\subsubsection{Riemannian orbit spaces}\label{subsubsec:bures-wasserstein_orbit_spaces}

When the action of $G$ on $\mc{M}$ is not free, the orbit space $\mc{M}/G$ is not a Riemannian manifold in general. This was studied in \cite{Alekseevsky03}. We briefly recall the main facts that we use in this work.

\begin{theorem}[Riemannian geometry of orbit spaces]\cite{Alekseevsky03}\label{thm:riem_orbit_space}
Let $(\mc{M},g)$ be a connected complete Riemannian manifold. Let $G$ be a Lie group acting smoothly, properly and isometrically on $(\mc{M},g)$. We denote $\pi:\mc{M}\lto\mc{M}/G$ the canonical surjection. For a Lie subgroup $H$ of $G$, we denote $(H)=\{gHg^{-1}|g\in G\}$ the conjugacy class of $H$ and $\mc{M}_{(H)}$ the set of points $x\in\mc{M}$ such that the stabilizer of $x$, $\Stab(x)=\{g\in G|gx=x\}$, belongs to $(H)$.
\begin{enumerate}
    \itemsep0em
    
    \item $\mc{M}_{(H)}$ is a smooth submanifold of $\mc{M}$.
    
    \item We denote $(\mc{M}/G)_{(H)}=\pi(\mc{M}_{(H)})=\mc{M}_{(H)}/G$ the (isotropy) stratum of type $(H)$. Then $\pi_{(H)}:=\pi_{|\mc{M}_{(H)}}:\mc{M}_{(H)}\lto(\mc{M}/G)_{(H)}$ is a smooth fiber bundle with fiber type $G/H$.
    
    \item The isotropy strata form a partition of $\mc{M}/G$.
    
    \item $(\mc{M}/G,d^0)$ is a complete metric space and a length space.
\end{enumerate}
\end{theorem}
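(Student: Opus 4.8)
The plan is to reduce all four statements to the local normal form provided by the slice theorem for proper actions (Palais), and then to read off the submanifold, bundle, partition and metric assertions from that model. Since the action is proper, every stabilizer $\Stab(x)$ is compact, and since $g$ is $G$-invariant and complete the normal exponential map furnishes a slice: there is a $\Stab(x)$-invariant ball $V_x$ in the normal space $\nu_x$ to the orbit $Gx$ such that the tube $G\cdot\Exp_x(V_x)$ is $G$-equivariantly diffeomorphic to the associated bundle $G\times_{\Stab(x)}V_x$, with $\Stab(x)$ acting on $V_x$ linearly through the isotropy (slice) representation. I would first set up this local model carefully, recording the crucial equivariance fact that the stabilizer of $\Exp_x(v)$ equals the stabilizer $H_v$ of $v$ inside $\Stab(x)$.

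For item 1, I work inside a single tube and may assume $\Stab(x)=H$ after conjugating. By the equivariance above, a point $\Exp_x(v)$ lies in $\mc{M}_{(H)}$ exactly when $H_v$ is conjugate in $G$ to $H$. Here I invoke the elementary but essential group-theoretic lemma: for a compact group $H$, a subgroup that is conjugate to $H$ and contained in $H$ must equal $H$. Since $H_v\subseteq H$, this forces $H_v=H$, so $\mc{M}_{(H)}\cap\Exp_x(V_x)$ is precisely the fixed-point set $V_x^{H}$, a linear subspace and hence a submanifold; sweeping by $G$ shows $\mc{M}_{(H)}$ is a submanifold, establishing item 1. For item 2, restricting to $\mc{M}_{(H)}$ the $H$-action on $V_x^{H}$ is trivial by definition of the fixed-point set, so the tube collapses to a product $(G/H)\times V_x^{H}$ and $\pi_{(H)}$ becomes projection onto $V_x^{H}$ with fiber $G/H$; these local trivializations are compatible and assemble into a smooth fiber bundle with fiber type $G/H$. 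Item 3 is immediate: if $y=gx$ then $\Stab(y)=g\,\Stab(x)\,g^{-1}$, so the conjugacy class of the stabilizer is constant on orbits and therefore well defined on $\mc{M}/G$, whence the strata partition the quotient.

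For item 4, I argue purely metrically. That $d^0$ is a genuine distance and that orbits are closed follows from properness, as recalled in the remark after the quotient-distance definition. The length-space property comes from the fact that $\pi$ is a submetry: it is $1$-Lipschitz and sends metric balls onto metric balls, so any curve downstairs is approximated in length by projections of curves upstairs while projecting realizes $d^0$ as an infimum of lengths; combined with Lemma \ref{lemma:length_quotient} this yields that $d^0$ equals its own length distance. For completeness, given a Cauchy sequence $(\pi(x_n))$ I would choose representatives $x_n$ that realize, up to arbitrarily small error, the infimum defining $d^0(\pi(x_n),\pi(x_{n+1}))$ by successively replacing $x_{n+1}$ with a suitable group translate; the resulting sequence is Cauchy in $\mc{M}$, hence convergent by completeness, and its limit projects to a limit of the original sequence, using that the orbits are closed.

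The main obstacle is genuinely items 1 and 2: everything hinges on having the slice/tube model and on the compact-subgroup lemma that upgrades ``$H_v$ conjugate to $H$'' into ``$H_v=H$''. Once the local model $G\times_H V_x$ is in hand and the fixed-point set $V_x^{H}$ is identified as the local picture of the stratum, the submanifold and bundle statements become formal, and items 3 and 4 reduce to standard facts about orbits and about quotients of complete length spaces by proper isometric actions.
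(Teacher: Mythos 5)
This theorem is not proved in the paper at all: it is quoted verbatim from \cite{Alekseevsky03} as background material, so there is no in-paper argument to compare yours against. Your proposal is, however, a correct reconstruction of the standard proof one finds in that reference (and in Bredon or Duistermaat--Kolk): the tube theorem $G\cdot\Exp_x(V_x)\simeq G\times_{\Stab(x)}V_x$ for proper isometric actions, the identification $\Stab(\Exp_x(v))=H_v\subseteq\Stab(x)$, and the compact-Lie-group lemma that a closed subgroup of $H$ conjugate in $G$ to $H$ must equal $H$ (same dimension forces equal identity components, same component count finishes it) together identify the stratum locally with $G\times_H V_x^H\simeq(G/H)\times V_x^H$, giving items 1--3 exactly as you describe. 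For item 4 your argument is sound; two small points are worth making explicit. First, for the length-space claim you only need Lemma \ref{lemma:length_quotient} plus the fact that $d^0$ is an infimum over representatives, so the ``balls onto balls'' submetry language is more than you need (and its onto-ness itself relies on near-attainment of the infimum). Second, in the completeness argument you should pass to a subsequence with $\sum_n d^0(\pi(x_n),\pi(x_{n+1}))<\infty$ before choosing representatives $y_{n+1}\in Gx_{n+1}$ with $d(y_n,y_{n+1})\ls d^0(\pi(x_n),\pi(x_{n+1}))+2^{-n}$, so that $(y_n)$ is genuinely Cauchy; convergence of a subsequence of a Cauchy sequence then gives convergence of the whole sequence. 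With those touches, your write-up matches the cited source's approach and is complete.
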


The definition of the vertical space still holds while the horizontal space is replaced by the normal space \cite[VI.29.2]{Michor08}.

\begin{definition}[Vertical space, normal space]
We take the notations of Theorem \ref{thm:riem_orbit_space}. Let $x\in\mc{M}_{(H)}$. Thus $\mc{M}_x=\pi^{-1}(\pi(x))$ is a submanifold of $\mc{M}_{(H)}$.
\begin{enumerate}
    \itemsep0em
    \item (Vertical space) $\mc{V}_x=T_x\mc{M}_x\subseteq T_x\mc{M}_{(H)}\subseteq T_x\mc{M}$.
    \item (Normal space) $\mc{N}_x=\mc{V}_x^\perp\subseteq T_x\mc{M}$ so that $T_x\mc{M}=\mc{V}_x\oplus\mc{N}_x$.
\end{enumerate}
\end{definition}

Note that $\mc{V}_x$ and $\mc{N}_x$ need not have a constant dimension. We recall a result that we use later.

\begin{lemma}[Geodesics in a Riemannian orbit space]\cite[Lemma 3.5]{Alekseevsky03}\label{lemma:minimizing_geodesics_strata}
We take the notation of Theorem \ref{thm:riem_orbit_space}. Given Lie subgroups $H_1,H_2$ of $G$, we denote $(H_1)\ls(H_2)$ if $H_1$ is conjugated so a subgroup of $(H_2)$. Let $\gamma:[0,1]\lto\mc{M}/G$ be a minimizing curve. For $t\in[0,1]$, let $(\mc{M}/G)_{(H_t)}$ denote the stratum of $\gamma(t)$. Then, for all $t\in(0,1)$, $(H_t)\ls(H_0)$ and $(H_t)\ls(H_1)$.
\end{lemma}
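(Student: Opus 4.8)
The plan is to transfer the whole question into the total space $\mc{M}$ by lifting $\gamma$ (which I take, up to reparametrization, to be a minimizing geodesic of constant speed) to a \emph{horizontal} minimizing geodesic. Write $p=\gamma(0)$ and $q=\gamma(1)$, viewed as orbits. Since $\mc{M}$ is complete and the action is proper, the orbits are closed, so there exist registered representatives $x_0\in p$ and $x_1\in q$ with $d(x_0,x_1)=d^0(p,q)$. Let $\tilde\gamma:[0,1]\lto\mc{M}$ be a minimizing geodesic from $x_0$ to $x_1$; it is horizontal, since the inner product of a geodesic velocity with any Killing field induced by $G$ is constant along the geodesic and vanishes at the ends. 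Using the orbit-space lifting theory underlying Theorems~\ref{thm:geodesics_quotient} and~\ref{thm:riem_orbit_space}, one shows the given $\gamma$ itself is the projection of such a horizontal lift, so that for each $t$ the stratum of $\gamma(t)$ is $(\Stab(\tilde\gamma(t)))$. The lemma then reduces to the claim that $\Stab(\tilde\gamma(t_0))\subseteq\Stab(\tilde\gamma(0))\cap\Stab(\tilde\gamma(1))$ for every $t_0\in(0,1)$.

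Fix $t_0\in(0,1)$ and $h\in\Stab(\tilde\gamma(t_0))$. The key step is a \emph{concatenation-and-corner} argument. Consider the broken curve
\[
c(t)=\begin{cases} h\cdot\tilde\gamma(t) & \text{if } t\in[0,t_0],\\ \tilde\gamma(t) & \text{if } t\in[t_0,1].\end{cases}
\]
It is continuous at $t_0$ because $h\cdot\tilde\gamma(t_0)=\tilde\gamma(t_0)$. Since $h$ acts by isometries, $L(h\cdot\tilde\gamma|_{[0,t_0]})=L(\tilde\gamma|_{[0,t_0]})$, hence $L(c)=L(\tilde\gamma)=d(x_0,x_1)=d^0(p,q)$. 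But $c$ joins $h\cdot x_0\in p$ to $x_1\in q$, so $d^0(p,q)\ls d(h\cdot x_0,x_1)\ls L(c)=d^0(p,q)$, forcing all terms to be equal and $c$ to be a minimizing curve in $\mc{M}$.

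Now I use that a length-minimizing curve in a Riemannian manifold is a smooth geodesic: a corner could always be shortened by the first-variation formula. Therefore $c$ has no corner at $t_0$, i.e. its one-sided velocities agree: $dh_{\tilde\gamma(t_0)}(\dot{\tilde\gamma}(t_0))=\dot{\tilde\gamma}(t_0)$. Thus $h\cdot\tilde\gamma$ and $\tilde\gamma$ are geodesics sharing the same position and velocity at $t_0$, so $h\cdot\tilde\gamma=\tilde\gamma$ on all of $[0,1]$. In particular $h$ fixes $\tilde\gamma(0)=x_0$ and $\tilde\gamma(1)=x_1$, i.e. $h\in\Stab(x_0)\cap\Stab(x_1)$. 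Since $h$ was arbitrary, $\Stab(\tilde\gamma(t_0))\subseteq\Stab(x_0)\cap\Stab(x_1)$, and by definition of the order this yields both $(H_{t_0})\ls(H_0)$ and $(H_{t_0})\ls(H_1)$.

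The delicate points I would treat carefully are the lifting step — the existence of a horizontal lift of the \emph{given} minimizing $\gamma$, not merely of some minimizing geodesic sharing its endpoints, together with the persistence of horizontality along $\tilde\gamma$ (handled via the conserved quantity $\langle\dot{\tilde\gamma},\xi\rangle$ for Killing fields $\xi$) — and the passage from ``piecewise geodesic and minimizing'' to ``smooth geodesic''. Everything else is the short concatenation argument, whose single output $dh(\dot{\tilde\gamma}(t_0))=\dot{\tilde\gamma}(t_0)$ simultaneously delivers both inclusions. I expect the lifting to be the main obstacle to a fully rigorous write-up, because at a singular point $x_0$ the horizontal lift of a tangent direction is not furnished by a bona fide submersion and must instead be extracted from the slice/normal-space description of the orbit space.
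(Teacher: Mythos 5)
The paper itself offers no proof of this lemma: it is quoted, proof omitted, from \cite[Lemma 3.5]{Alekseevsky03}, so there is no in-paper argument to compare yours against. Your proof is in substance the standard one for that result, and its core --- the concatenation-and-corner argument --- is correct and complete: for $h\in\Stab(\tilde\gamma(t_0))$ the broken curve $c$ has length $d^0(p,q)$ and joins $h\cdot x_0\in\pi^{-1}(p)$ to $x_1\in\pi^{-1}(q)$, hence is minimizing in $\mc{M}$, hence an unbroken geodesic, so $dh(\dot{\tilde\gamma}(t_0))=\dot{\tilde\gamma}(t_0)$, $h\cdot\tilde\gamma=\tilde\gamma$, and both inclusions follow at once. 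The horizontality via the conserved quantity $\langle\dot{\tilde\gamma},\xi\rangle$ for induced Killing fields is also sound, since a geodesic realizing the distance from the closed orbit $Gx_0$ to $x_1$ meets that orbit orthogonally and the Killing fields span the vertical space at every point.

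The genuine gap is the one you flag yourself: a minimizing geodesic $\tilde\gamma$ between registered representatives of the endpoints only yields \emph{some} minimizing geodesic $\pi\circ\tilde\gamma$ from $p$ to $q$, not the \emph{given} $\gamma$. This is not a technicality here, since the whole point of the paper is that there are typically infinitely many minimizing geodesics between two orbits (Table \ref{tab:minimizing_geodesics}); as written, your argument proves the stratum inclusions only for those minimizers that happen to arise as such projections, whereas Lemma \ref{lemma:rank} needs the statement for an arbitrary one. The standard repair is a subdivision argument: fix $y\in\pi^{-1}(\gamma(1/2))$, lift $\gamma_{|[0,1/2]}$ and $\gamma_{|[1/2,1]}$ to minimizing geodesics in $\mc{M}$ ending, respectively starting, at $y$ and realizing $d^0(p,\gamma(1/2))$ and $d^0(\gamma(1/2),q)$; their concatenation has length $d^0(p,q)$ between the two end orbits, hence is an unbroken horizontal minimizing geodesic whose projection agrees with $\gamma$ at $t=0,1/2,1$. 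Iterating over dyadic subdivisions, with all lifts pinned at $y$ at time $1/2$, and extracting a convergent subsequence of geodesics (Hopf--Rinow plus compactness of the unit sphere in $T_y\mc{M}$) gives a horizontal minimizing geodesic projecting onto $\gamma$ at all dyadic times, hence everywhere. One further caveat: as recalled, the statement assumes only a ``minimizing curve'', and your reparametrization step does not actually cover degenerate parametrizations (e.g.\ a minimizing curve that is constant on an initial subinterval), for which the conclusion at those interior times can genuinely fail --- already for $\Orth(2)$ acting on $\R^2$ --- so the constant-speed hypothesis should be made explicit in the statement rather than absorbed into the proof.
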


We are now well prepared to study the construction of the orbit space of covariance matrices (Section \ref{sec:bures-wasserstein_geometry}), the geodesics within each stratum (Sections \ref{sec:bures-wasserstein_sym^+(n)} and \ref{sec:bures-wasserstein_sym^+(n,k)}) and the minimizing geodesics in the whole space (Section \ref{sec:bures-wasserstein_cov(n)}).

\section{Bures-Wasserstein geometry of covariance matrices}\label{sec:bures-wasserstein_geometry}

We denote the set of symmetric positive semi-definite matrices or covariance matrices by $\Cov(n)=\{\Sigma\in\Sym(n)|\Sigma\gs 0\}=\{XX^\top|X\in\Mat(n)\}\subset\Mat(n)$. It is a complete metric subspace of the vector space of $n\times n$ square matrices equipped with the Euclidean distance $d^{\mathrm{E}}(\Sigma,\Lambda)=\|\Sigma-\Lambda\|_2=\tr((\Sigma-\Lambda)^2)^{1/2}$.

In this section, we recall that this set can also be described as the orbit space of the manifold $\Mat(n)$ of $n\times n$ matrices quotiented on the right by the orthogonal group $\Orth(n)$. We recall that the quotient topology coincides with the Euclidean topology and we recall the expression of the quotient distance known as the Bures-Wasserstein distance. We insist on the definition of the strata, which are investigated in the following sections.

\subsection{The quotient geometry of covariance matrices}\label{subsec:bures-wasserstein_quotient_geometry}

The \textbf{group action} of the orthogonal group $\Orth(n)$ on the vector space of square matrices $\Mat(n)$ is $(X,U)\in\Mat(n)\times\Orth(n)\lmto XU\in\Mat(n)$. It is smooth, proper and isometric for the Euclidean distance.

The \textbf{stabilizer} (or isotropy group) of a matrix $X\in\Mat(n)$ is $\Stab(X)=\{U\in\Orth(n)|XU=X\}$. If we denote $k=\rk(X)$, it is well known that $X$ is equivalent to the matrix $J_k=\begin{pmatrix}I_k & 0\\0 & 0\end{pmatrix}$, i.e. there exist $P,Q\in\GL(n)$ such that $X=PJ_kQ$. Then it is clear that $H_k:=\Stab(J_k)=\{\begin{pmatrix}I_k & 0\\0 & U\end{pmatrix}|U\in\Orth(n-k)\}$ and $\Stab(X)=Q^{-1}\,\Stab(J_k)\,Q$ with $\dim\Stab(X)=\dim\Orth(n-k)=\frac{(n-k)(n-k-1)}{2}$. Hence two matrices have conjugate stabilizers if and only if they have the same rank. Note that $(H_k)\ls(H_l)$ if and only if $k\gs l$.

The \textbf{orbit} of $X$ is $\Orb(X)=\{XU|U\in\Orth(n)\}\simeq\Orth(n)/\Stab(X)$. Its dimension is $\dim\Orb(X)=\dim\Orth(n)-\dim\Stab(X)=nk-\frac{k(k+1)}{2}$. Given a result recalled in the introduction, we have $\Orb(X)=\{Y\in\Mat(n)|YY^\top=XX^\top\}$.

The \textbf{orbit space} $\Mat(n)/\Orth(n)=\{\Orb(X)|X\in\Mat(n)\}$ is thus in bijection with the set of covariance matrices $\Cov(n)=\{XX^\top|X\in\Mat(n)\}$ by the map $\Orb(X)\lmto XX^\top$. 

The \textbf{orbit strata} of $\Mat(n)/\Orth(n)$ are the sets of points that have conjugate stabilizers \cite{Alekseevsky03}, i.e. that have the same rank here. This give a manifold structure to $\Mat(n)_{(H_k)}=\R^{n\times n}_k$. The strata of $\Mat(n)/\Orth(n)$ are $\R^{n\times n}_k/\Orth(n)$, or equivalently the strata of covariance matrices are the sets of symmetric positive semi-definite matrices of fixed rank $\Sym^+(n,k)=\Cov(n)\cap\R^{n\times n}_k$. The principal/regular stratum is the set of Symmetric Positive Definite (SPD) matrices $\Sym^+(n)=\Cov(n)\cap\GL(n)$. Finally, $\pi_k:=\pi_{(H_k)}:\R^{n\times n}_k\lto\Sym^+(n,k)$ is a smooth fiber bundle with fiber type $\St(n,k)=\Orth(n)/\Orth(n-k)$, in particular it is a submersion.

\subsection{The Bures-Wasserstein distance}\label{subsec:bures-wasserstein_distance}

Since the group action is continuous, proper and isometric, the Euclidean distance descends to a distance on $\Mat(n)/\Orth(n)$. Via the bijection $\Orb(X)\in\Mat(n)/\Orth(n)\lmto XX^\top\in\Cov(n)$, it is usually expressed as a distance on covariance matrices. It is known as the Bures-Wasserstein distance \cite{Dowson82,Olkin82,Bhatia19}.

\begin{definition}[Bures-Wasserstein distance]\label{def:BW_distance}\cite{Bhatia19}
The Bures-Wasserstein distance between $\Sigma$ and $\Lambda$ is defined by:
\begin{align}
    d^{\mathrm{BW}}(\Sigma,\Lambda)&=\inf_{\substack{X,Y\in\Mat(n)\\XX^\top=\Sigma,YY^\top=\Lambda}}d^{\mathrm{E}}(X,Y)=\inf_{R\in\Orth(n)}d^{\mathrm{E}}(\Sigma^{1/2},\Lambda^{1/2}R)\\
    &=\tr(\Sigma+\Lambda-2(\Sigma^{1/2}\Lambda\Sigma^{1/2})^{1/2})^{1/2}.
\end{align}
If $XX^\top=\Sigma$ and $YY^\top=\Lambda$, let $R\in\Orth(n)$ such that $X^\top Y=(X^\top\Lambda X)^{1/2}R$. Then $d^{\mathrm{E}}(X,YR^\top)^2=\|YR^\top-X\|_2^2=\tr(XX^\top+YY^\top-2X^\top YR^\top)=d^{\mathrm{BW}}(\Sigma,\Lambda)^2$.
\end{definition}

As the quotient of a length space, the space of covariance matrices endowed with the Bures-Wasserstein metric is a length space. It is even a complete geodesic metric space \cite[Proposition 3.1.(1)]{Alekseevsky03}.

The following result seems elementary although we did not find a clear reference in the literature.

\begin{lemma}[\chixlemmaeucbwtoptitle]\label{lemma:bures-wasserstein_topologies}
\chixlemmaeucbwtop\\
\begin{NDLR}
See the proof of Lemma \ref{lemma:bures-wasserstein_topologies} in Appendix \ref{subsec:proof:lemma:bures-wasserstein_topologies}.
\end{NDLR}
\end{lemma}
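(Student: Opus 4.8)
The plan is to show that each of the two distances locally controls the other, which for metrics is exactly what is needed to conclude that they generate the same topology (equivalently, that they admit the same convergent sequences). I would establish two one-sided estimates, one for each implication.

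For the implication that $d^{\mathrm{E}}$-convergence forces $d^{\mathrm{BW}}$-convergence, I would use the infimum expression of Definition \ref{def:BW_distance}. Taking the symmetric square roots $X=\Sigma^{1/2}$ and $Y=\Lambda^{1/2}$ as admissible competitors gives immediately $d^{\mathrm{BW}}(\Sigma,\Lambda)\ls\|\Sigma^{1/2}-\Lambda^{1/2}\|_2$. It then remains to invoke the continuity of the symmetric square root $\Sigma\in\Cov(n)\lmto\Sigma^{1/2}\in\Cov(n)$ with respect to the Euclidean topology, so that the right-hand side tends to $0$ as $d^{\mathrm{E}}(\Sigma,\Lambda)\to0$. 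This shows that the identity map is continuous from $(\Cov(n),d^{\mathrm{E}})$ to $(\Cov(n),d^{\mathrm{BW}})$.

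For the reverse implication, I would first note that the infimum defining $d^{\mathrm{BW}}$ is attained. Fixing $X$ with $XX^\top=\Sigma$, the set of square roots of $\Lambda$ is exactly $\{\Lambda^{1/2}R\,|\,R\in\Orth(n)\}$ and the map $R\in\Orth(n)\lmto\|X-\Lambda^{1/2}R\|_2$ is continuous on the compact group $\Orth(n)$, so there is $Y$ with $YY^\top=\Lambda$ realizing $\|X-Y\|_2=d^{\mathrm{BW}}(\Sigma,\Lambda)$. The key algebraic step is then the factorization $\Sigma-\Lambda=XX^\top-YY^\top=X(X-Y)^\top+(X-Y)Y^\top$, which together with the submultiplicativity of the Frobenius norm yields
\begin{equation*}
d^{\mathrm{E}}(\Sigma,\Lambda)=\|\Sigma-\Lambda\|_2\ls(\|X\|_2+\|Y\|_2)\,\|X-Y\|_2=(\tr(\Sigma)^{1/2}+\tr(\Lambda)^{1/2})\,d^{\mathrm{BW}}(\Sigma,\Lambda),
\end{equation*}
using $\|X\|_2^2=\tr(XX^\top)=\tr(\Sigma)$ and likewise $\|Y\|_2^2=\tr(\Lambda)$. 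Along a $d^{\mathrm{BW}}$-convergent sequence the prefactor stays bounded (since $\tr(\Sigma_m)^{1/2}=\|X_m\|_2\ls\|X\|_2+d^{\mathrm{BW}}(\Sigma_m,\Sigma)$), so this estimate gives the continuity of the identity from $(\Cov(n),d^{\mathrm{BW}})$ to $(\Cov(n),d^{\mathrm{E}})$, and the two topologies coincide.

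The only nontrivial inputs are the continuity of the matrix square root (for the first estimate) and the attainment of the optimal orthogonal alignment (for the second); both are standard, the latter following from compactness of $\Orth(n)$. I expect the reverse implication to be the substantive half, since it is there that one must pass from closeness of the square roots back to closeness of the covariance matrices themselves, which is precisely what the factorization identity accomplishes.
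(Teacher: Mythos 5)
Your proof is correct, and it differs from the paper's in one of the two directions. For the implication ``$d^{\mathrm{E}}$-convergence $\Rightarrow$ $d^{\mathrm{BW}}$-convergence'' you argue exactly as the paper does: the bound $d^{\mathrm{BW}}(\Sigma,\Lambda)\ls\|\Sigma^{1/2}-\Lambda^{1/2}\|_2$ combined with the continuity of the square root (the paper phrases this as $\pow_2$ being a homeomorphism of $(\Cov(n),d^{\mathrm{E}})$ and works with balls rather than sequences, but the content is identical). For the reverse implication, the paper takes an abstract route: since $\pi:X\lmto XX^\top$ is continuous into $(\Cov(n),d^{\mathrm{E}})$, the quotient topology --- which is the Bures-Wasserstein topology --- is finer than the Euclidean one, and that is the whole argument. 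You instead prove a quantitative, locally Lipschitz estimate $d^{\mathrm{E}}(\Sigma,\Lambda)\ls(\tr(\Sigma)^{1/2}+\tr(\Lambda)^{1/2})\,d^{\mathrm{BW}}(\Sigma,\Lambda)$ via the factorization $XX^\top-YY^\top=X(X-Y)^\top+(X-Y)Y^\top$ applied to an optimally aligned pair (whose existence you correctly justify by compactness of $\Orth(n)$, consistent with the parametrization $\{\Lambda^{1/2}R\mid R\in\Orth(n)\}$ of the square roots of $\Lambda$). Both are valid; the paper's version is shorter but leans on the identification of the BW topology with the quotient topology, whereas yours is self-contained and gives more --- an explicit local comparison of the two distances with a prefactor controlled along convergent sequences --- at the cost of the small extra step of checking that the infimum is attained.
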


\subsection{Topology, metric and smooth structure of the strata}\label{subsec:bures-wasserstein_topology}

The set of symmetric positive definite (SPD) matrices $\Sym^+(n)=\{\Sigma\in\Sym(n)|\,\sp(\Sigma)\subset(0,+\infty)\}$ is an open set of the vector space of symmetric matrices, hence it has a natural structure of smooth manifold. This topology clearly coincides with the topology induced by $(\Cov(n),d^{\mathrm{E}})$, thus it also coincides with the topology induced by $(\Cov(n),d^{\mathrm{BW}})$.

The set $\Sym^+(n,k)$ is in bijection with $\R^{n\times n}_k/\Orth(n)$. The Euclidean distance on $\R^{n\times n}_k$ clearly descends to the Bures-Wasserstein distance on $\Sym^+(n,k)$. Therefore, the quotient topology coincides with the Bures-Wasserstein topology induced by $(\Cov(n),d^{\mathrm{BW}})$, thus also with the Euclidean topology induced by $(\Cov(n),d^{\mathrm{E}})$ and $(\Sym(n),d^{\mathrm{E}})$.

However, the Riemannian geometry is difficult to study via the submersion $\pi_k:\R^{n\times n}_k\lto\Sym^+(n,k)$. Indeed, it is the projection of a bundle of fiber $\St(n,k)\simeq\Orth(n)/\Orth(n-k)$ which is not a Lie group. Fortunately, the set $\Sym^+(n,k)$ is also in bijection with the quotient manifold $\R^{n\times k}_*/\Orth(k)$ \cite[Proposition 2.1]{Massart20}, where $\R^{n\times k}_*$ is the open set of matrices of full rank in $\R^{n\times k}$. We recall this quotient geometry in Table \ref{tab:manifold_Sym^+(n,k)}. The quotient distance induced on $\Sym^+(n,k)$ is the Bures-Wasserstein distance \cite[Proposition 5.1]{Massart20}. In particular, the quotient topology coincides with the previous ones. This bijection naturally provides a smooth structure on $\Sym^+(n,k)$. Above all, the submersion $\Tilde{\pi}_k:\R^{n\times r}_*\lto\Sym^+(n,k)$ is the projection of a principal fiber bundle. Hence, it is much more convenient to study the Bures-Wasserstein Riemannian geometry of $\Sym^+(n,k)$ via the Riemannian submersion $\Tilde{\pi}_k$. This is exactly what is done in \cite{Massart20}.

To summarize, the strata $\Sym^+(n,k)$ are smooth connected manifolds and the regular stratum $\Sym^+(n)$ is a dense open set in $\Cov(n)$.

    \begin{table}[h]
    \centering
    \begin{tabular}{|c|c|}
        \hline
        Set & $\Sym^+(n,k)=\Cov(n)\cap\R^{n\times n}_k$ \\
        \hline
        Smooth manifold & $\R^{n\times k}_*/\Orth(k)$ \\
        \hline
        Group action & $\fun{\R^{n\times k}_*\times\Orth(k)}{\R^{n\times k}_*}{(X,U)}{XU}$ \\
        \hline
        Orbit & $\Orb(X)=\{Y\in\R^{n\times k}_*|YY^\top=XX^\top\}$ \\
        \hline
        Identification & $\fun{\R^{n\times k}_*/\Orth(k)}{\Sym^+(n,k)}{\Orb(X)}{XX^\top}$ \\
        \hline
        Submersion & $\pi_{\Sym^+(n,k)}:\fun{\R^{n\times k}_*}{\Sym^+(n,k)}{X}{XX^\top}$ \\
        \hline
    \end{tabular}
    \caption{Smooth manifold structure of $\Sym^+(n,k)$.}
    \label{tab:manifold_Sym^+(n,k)}
    \end{table}

The Riemannian geometry of the principal stratum was extensively studied \cite{Takatsu10,Takatsu11,Malago18,Bhatia19,Oostrum20,Thanwerdas22-LAA} and the Riemannian geometry of the other strata was recently well detailed \cite{Massart19,Massart20}. However, there remain missing formulae and open questions about the geodesics in each stratum: injection domain, preimages, and explicit formulae of the horizontal lift, the exponential map and logarithms in the base space $\Sym^+(n,k)$ of the principal fiber bundle $\R^{n\times k}_*\lto\Sym^+(n,k)$. We precisely answer these questions in Section \ref{sec:bures-wasserstein_sym^+(n)} (full-rank matrices) and Section \ref{sec:bures-wasserstein_sym^+(n,k)} (low-rank matrices). Furthermore, we contribute in Section \ref{sec:bures-wasserstein_cov(n)} the minimizing geodesics for the Bures-Wasserstein \textit{distance} between different strata and the condition of uniqueness of the geodesic between two points.

\section{Geodesics of the Bures-Wasserstein metric on $\Sym^+(n)$}\label{sec:bures-wasserstein_sym^+(n)}

In this section, we give complements and new results on the Bures-Wasserstein geodesics on SPD matrices $\Sym^+(n)$. The quotient structure is well known, as well as the exponential map \cite{Malago18} and the injectivity radius \cite{Massart20}. The definition interval of the geodesic was implicitly described in \cite{Malago18} as the connected component of 0 in a subset or $\R$ so we give it explicitly here. It was proved in \cite{Bhatia19} that there exists a preimage which is a logarithm. We prove the uniqueness of the preimage and the logarithm based on a result of \cite{Massart20} on $\Sym^+(n,k)$ applied for $k=n$. Moreover, we contribute the cut time, thus the injection domain. After Theorem \ref{thm:bw_geodesics_sym^+(n)} on Bures-Wasserstein geodesics, we show on an example that we already know some geodesics between degenerate matrices that cross the main stratum of SPD matrices.

\begin{definition}[Bures-Wasserstein metric on $\Sym^+(n)$]\cite{Malago18,Bhatia19,Oostrum20}
The Bures-Wasserstein metric on $\Sym^+(n)$ is the quotient Riemannian metric induced by the submersion $\pi:X\in\GL(n)\lmto XX^\top\in\Sym^+(n)$ and the Frobenius metric on $\GL(n)$. Let $X\in\GL(n)$ such that $XX^\top=\Sigma$ and let $V\in T_\Sigma\Sym^+(n)\equiv\Sym(n)$. The quotient operations are:
\begin{enumerate}
    \itemsep0em
    \item (Vertical space) $\mc{V}_X=\ker d_X\pi=X\,\Skew(n)$,
    \item (Horizontal space) $\mc{H}_X=\Sym(n)\,X$,
    \item (Horizontal lift) $V^\#_X=\mc{S}_\Sigma(V)X\in\mc{H}_X$,
    \item (Bures-Wasserstein metric) $g^{BW(n)}_\Sigma(V,V)=\tr(\mc{S}_\Sigma(V)\Sigma\mc{S}_\Sigma(V))$,
\end{enumerate}
where $\mc{S}_\Sigma(V)\in\Sym(n)$ is the unique solution of the Sylvester equation $\Sigma\mc{S}_\Sigma(V)+\mc{S}_\Sigma(V)\Sigma=V$.
\end{definition}

\begin{theorem}[\chixthmbwgeodsymntitle]\label{thm:bw_geodesics_sym^+(n)}
Let $\Sigma\in\Sym^+(n)$.
\begin{enumerate}
    \item (Exponential map) \cite{Malago18} For all $V\in T_\Sigma\Sym^+(n)\equiv\Sym(n)$, the geodesic from $\Sigma$ with initial speed $V$ writes $\gamma_{(\Sigma,V)}(t)=\Sigma+tV+t^2\mc{S}_\Sigma(V)\Sigma\mc{S}_\Sigma(V)\in\Sym^+(n)$.
    
    \item (Definition interval) Let $\lambda_\maxi=\max\sp(\mc{S}_\Sigma(V))$ and $\lambda_\mini=\min\sp(\mc{S}_\Sigma(V))$. The definition interval of the geodesic $\gamma_{(\Sigma,V)}$ is the interval $I_{\Sigma,V}$ defined by:
    \begin{enumerate}[label=$\cdot$]
        \item $I_{\Sigma,V}=(-\frac{1}{\lambda_\maxi},-\frac{1}{\lambda_\mini})$ if $\lambda_\mini<0<\lambda_\maxi$,
        \item $I_{\Sigma,V}=(-\infty,-\frac{1}{\lambda_\mini})$ if $\lambda_\mini<0$ and $\lambda_\maxi\ls 0$,
        \item $I_{\Sigma,V}=(-\frac{1}{\lambda_\maxi},+\infty)$ if $\lambda_\mini\gs 0$ and $\lambda_\maxi>0$,
        \item $I_{\Sigma,V}=\R$ if $\lambda_\mini=\lambda_\maxi=0$ (which only happens for $V=0$).
    \end{enumerate}
    
    \item (Cut time) The cut time is $t_{cut}(\Sigma,V)=-\frac{1}{\lambda_\mini}$ if $\lambda_\mini<0$ or $+\infty$ otherwise. The geodesic $\gamma_{(\Sigma,V)}:I_{\Sigma,V}\lto\mc{M}$ is even minimizing on $I_{\Sigma,V}$.
    
    \item (Logarithm map) For all $\Lambda\in\Sym^+(n)$, there exists a unique preimage $V\in\mc{P}re_\Sigma(\Lambda)$. It writes $V=2\,\sym(\Sigma^{1/2}(\Sigma^{1/2}\Lambda\Sigma^{1/2})^{1/2}\Sigma^{-1/2})-2\Sigma$, where we denote $\sym(M)=\frac{1}{2}(M+M^\top)$. The geodesic joining $\Sigma$ to $\Lambda$ writes:
    \begin{equation}\label{eq:geodesic_full-rank}
        \gamma_{\Sigma\to\Lambda}(t)=(1-t)^2\Sigma+t^2\Lambda+2t(1-t)\,\sym(\Sigma^{1/2}(\Sigma^{1/2}\Lambda\Sigma^{1/2})^{1/2}\Sigma^{-1/2}).
    \end{equation}
    
    Moreover, it is a logarithm: $V\in\mc{L}og_x(y)$. Thus the logarithm map is defined on $\mc{U}_\Sigma=\Sym^+(n)$ and it writes:
    \begin{equation}
        \Log_\Sigma:\fun{\Sym^+(n)}{T_\Sigma\Sym^+(n)}{\Lambda}{2\sym(\Sigma^{1/2}(\Sigma^{1/2}\Lambda\Sigma^{1/2})^{1/2}\Sigma^{-1/2})-2\Sigma}.
    \end{equation}
\end{enumerate}
\begin{NDLR}
See the proof of Theorem \ref{thm:bw_geodesics_sym^+(n)} in Appendix \ref{subsec:proof:thm:bw_geodesics_sym^+(n)}.
\end{NDLR}
\end{theorem}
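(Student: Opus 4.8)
My plan is to drive every part from a single factorization together with the submersion $\pi:X\in\GL(n)\lmto XX^\top\in\Sym^+(n)$. Writing $S=\mc{S}_\Sigma(V)$, the Sylvester equation reads $V=\Sigma S+S\Sigma$, so the geodesic of Part 1 factorizes as $\gamma_{(\Sigma,V)}(t)=(I_n+tS)\Sigma(I_n+tS)$. Since the horizontal lift of $V$ at $X$ (with $XX^\top=\Sigma$) is $V^\#_X=SX$ and Frobenius geodesics of $\GL(n)$ are straight lines, the horizontal geodesic through $X$ is $X(t)=(I_n+tS)X$, and indeed $X(t)X(t)^\top=\gamma_{(\Sigma,V)}(t)$ by Theorem \ref{thm:geodesics_quotient}. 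This factorization is the engine for all four items.

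For the definition interval (Part 2) I would note that, as $\Sigma>0$, the congruence $(I_n+tS)\Sigma(I_n+tS)$ lies in $\Sym^+(n)$ if and only if $I_n+tS$ is invertible, i.e. $1+t\mu\ne 0$ for every eigenvalue $\mu\in[\lambda_\mini,\lambda_\maxi]$ of the symmetric matrix $S$. For $t>0$ the first obstruction is at $t=-1/\lambda_\mini$ when $\lambda_\mini<0$, and for $t<0$ at $t=-1/\lambda_\maxi$ when $\lambda_\maxi>0$. Taking the connected component of $0$ and splitting on the signs of $\lambda_\mini,\lambda_\maxi$ reproduces exactly the four listed cases, the degenerate one $\lambda_\mini=\lambda_\maxi=0$ forcing $S=0$, hence $V=0$ and $I_{\Sigma,V}=\R$.

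The crux is the minimality over the whole interval (Part 3), and this is the step I expect to be the main obstacle. For $t_1\in I_{\Sigma,V}$ the eigenvalues $1+t_1\mu$ of $I_n+t_1S$ are all positive by the interval analysis, so $X^\top X(t_1)=X^\top(I_n+t_1S)X$ is symmetric positive definite; moreover $X^\top\gamma_{(\Sigma,V)}(t_1)X=\big(X^\top(I_n+t_1S)X\big)^2$, so $X^\top X(t_1)=\big(X^\top\gamma_{(\Sigma,V)}(t_1)X\big)^{1/2}$. By Definition \ref{def:BW_distance} this means the optimal registration rotation is $R=I_n$, i.e. $X(0)$ and $X(t_1)$ are registered: $d^{\mathrm{E}}(X,X(t_1))=d^{\mathrm{BW}}(\Sigma,\gamma_{(\Sigma,V)}(t_1))$. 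Since $X(t)$ stays in $\GL(n)$ on $I_{\Sigma,V}$, the line segment is Euclidean-minimizing there, so Lemma \ref{lemma:length_quotient} makes $\gamma_{(\Sigma,V)}=\pi\circ X$ minimizing on every $[0,t_1]\subset I_{\Sigma,V}$ (and symmetrically for $t_1<0$). Hence the cut time is the right endpoint of $I_{\Sigma,V}$, namely $-1/\lambda_\mini$ if $\lambda_\mini<0$ and $+\infty$ otherwise.

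For the logarithm (Part 4), a preimage of $\Lambda$ is a symmetric $S$ with $(I_n+S)\Sigma(I_n+S)=\Lambda$ and $1\in I_{\Sigma,V}$; the latter forces every eigenvalue of $S$ to exceed $-1$, i.e. $T:=I_n+S>0$. Conjugating $T\Sigma T=\Lambda$ by $\Sigma^{1/2}$ gives $(\Sigma^{1/2}T\Sigma^{1/2})^2=\Sigma^{1/2}\Lambda\Sigma^{1/2}$, and uniqueness of the positive square root forces $\Sigma^{1/2}T\Sigma^{1/2}=(\Sigma^{1/2}\Lambda\Sigma^{1/2})^{1/2}$, hence the single admissible $T=\Sigma^{-1/2}(\Sigma^{1/2}\Lambda\Sigma^{1/2})^{1/2}\Sigma^{-1/2}$ and the announced $V=\Sigma T+T\Sigma-2\Sigma=2\,\sym(\Sigma^{1/2}(\Sigma^{1/2}\Lambda\Sigma^{1/2})^{1/2}\Sigma^{-1/2})-2\Sigma$ (alternatively one invokes the uniqueness statement of \cite{Massart20} for $k=n$). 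Because $T>0$ yields $\lambda_\mini>-1$, we have $1\in I_{\Sigma,V}$, so Part 3 makes $\gamma_{(\Sigma,V)}$ minimizing on $[0,1]$; its constant speed $\|V\|$ therefore equals $d^{\mathrm{BW}}(\Sigma,\Lambda)$, so $V$ is the unique logarithm and $\mc{U}_\Sigma=\Sym^+(n)$, with the geodesic formula obtained by substituting $T$ into $\gamma_{(\Sigma,V)}(t)=(I_n+tS)\Sigma(I_n+tS)$.
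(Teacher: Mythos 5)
Your proposal is correct, and for the two substantive items (cut time and logarithm) it takes a genuinely different and more self-contained route than the paper. The paper proves statement 4 first by citing \cite{Bhatia19} for the existence of a logarithm and \cite[Proposition 4.4]{Massart20} for the uniqueness of the preimage, and then deduces statement 3 from that uniqueness: an intermediate arc of the geodesic is a preimage, hence \emph{the} preimage, hence the logarithm, hence minimizing. You invert the logic: you prove minimality on $I_{\Sigma,V}$ directly by showing that $X$ and $X(t_1)=(I_n+t_1S)X$ are registered (via $X^\top X(t_1)=(X^\top\gamma_{(\Sigma,V)}(t_1)X)^{1/2}>0$) and invoking Lemma \ref{lemma:length_quotient}, and you prove uniqueness of the preimage by reducing $(I_n+S)\Sigma(I_n+S)=\Lambda$ with $I_n+S>0$ to the uniqueness of the positive square root of $\Sigma^{1/2}\Lambda\Sigma^{1/2}$. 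What this buys is independence from \cite{Bhatia19} and \cite{Massart20}: everything follows from the factorization $\gamma_{(\Sigma,V)}(t)=(I_n+tS)\Sigma(I_n+tS)$ and the quotient machinery already set up in Section \ref{subsec:bures-wasserstein_quotient_spaces}. Indeed, your registration computation is essentially the mechanism that reappears in the paper's proof of Theorem \ref{thm:minimizing_geodesics}, so your argument anticipates the general case.

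Two small points to tighten. First, ``minimizing on $I_{\Sigma,V}$'' requires $L(\gamma_{|[t_0,t_1]})=d^{\mathrm{BW}}(\gamma(t_0),\gamma(t_1))$ for \emph{all} $t_0\ls t_1$ in $I_{\Sigma,V}$, and minimality on $[0,t_1]$ together with minimality on $[t_0,0]$ does not by itself cover segments straddling $0$. The fix is immediate with your own computation: for any $t_0,t_1\in I_{\Sigma,V}$ one has $X(t_0)^\top X(t_1)=X^\top(I_n+t_0S)(I_n+t_1S)X$, which is symmetric with spectrum $\{(1+t_0\mu)(1+t_1\mu)\}_{\mu\in\sp(S)}\subset(0,+\infty)$, so $X(t_0)$ and $X(t_1)$ are registered and the same argument applies to $[t_0,t_1]$; state this general version. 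Second, your identification of the definition interval tacitly assumes that the maximal geodesic cannot be extended past the connected component of $\{t\,|\,I_n+tS\in\GL(n)\}$; this is where the paper leans on \cite{Malago18}. It is worth one sentence: as $t$ tends to a finite endpoint, $\gamma_{(\Sigma,V)}(t)$ converges in $\Sym(n)$ to a rank-deficient matrix, so no continuous extension within $\Sym^+(n)$ exists.
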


\begin{remark}
The minimizing geodesic $\gamma_{(\Sigma,V)}:I_{\Sigma,V}\lto\Sym^+(n)$ clearly has a limit at the finite boundaries of $I_{\Sigma,V}$. When $I_{\Sigma,V}$ is bounded, we can define $\Sigma_0=\lim_{t\to-1/\lambda_\maxi}\gamma_{(\Sigma,V)}(t)$, $\Sigma_1=\lim_{t\to-1/\lambda_\mini}\gamma_{(\Sigma,V)}(t)$ and the extended curve $\gamma:\bar{I}_{\Sigma,V}=[-1/\lambda_\maxi,-1/\lambda_\mini]$ by $\gamma(t)=\gamma_{(\Sigma,V)}(t)$ for $t\in I_{\Sigma,V}$, $\gamma(-1/\lambda_\maxi)=\Sigma_0$ and $\gamma(-1/\lambda_\mini)=\Sigma_1$. The curve $\gamma$ is a minimizing geodesic on $I_{\Sigma,V}$. Thus, by Lemma \ref{lemma:length} (continuity of the length), the curve $\gamma$ is a minimizing geodesic on $[-1/\lambda_\maxi,-1/\lambda_\mini]$. So we already have examples of minimizing geodesics between two degenerate matrices which pass through the principal stratum $\Sym^+(n)$. For instance, we have $\Sigma=\begin{pmatrix}4 & 0\\0 & 0\end{pmatrix}$ and $\Lambda=\begin{pmatrix}0 & 0\\0 & 4\end{pmatrix}$ which are linked by the geodesic $\Exp_{I_2}(tV)=\begin{pmatrix}(1+t)^2 & 0\\0 & (1-t)^2\end{pmatrix}$ for $t\in(-1,1)$ with $V=\begin{pmatrix}2 & 0\\0 & -2\end{pmatrix}$.

Moreover, from the viewpoint of the Levi-Civita connection, the curve $t\lmto\Exp_\Sigma(tV)$ is a geodesic (self-parallel curve) on each subinterval of the set $J_{\Sigma,V}=\{t\in\R|\Exp_\Sigma(tV)\in\Sym^+(n)\}=\R\backslash{\{t\in\R|-\frac{1}{t}\in\sp(\mc{S}_\Sigma(V))\}}$, which is $\R$ without a maximum of $n$ points. Theorem \ref{thm:bw_geodesics_sym^+(n)} actually states that every geodesic is minimizing on its domain. Hence, the minimizing geodesic $\gamma_{(\Sigma,V)}:I_{\Sigma,V}\lto\Sym^+(n)$ naturally extends to a curve $\gamma_{(\Sigma,V)}:\R\lto\Cov(n)$ which is a minimizing geodesic on the segments delimited by two consecutive values in $\R\backslash{J_{\Sigma,V}}$.
\end{remark}

\section{Geodesics of the Bures-Wasserstein metric on $\Sym^+(n,k)$}\label{sec:bures-wasserstein_sym^+(n,k)}

In this section, we give complements and new results on the Bures-Wasserstein geodesics on the manifold of PSD matrices of fixed rank $k$, $\Sym^+(n,k)$. They were mainly studied in \cite{Massart20}. The formulae of the exponential map and its definition domain were kept implicit because they were formulated in function of horizontal vectors in the total space $\R^{n\times k}_*$ of matrices of full-rank $k$. We compute the horizontal lift, which allows us to express the Bures-Wasserstein metric, the exponential map and the definition interval directly in function of the tangent vector. Moreover, the characterization of preimages of the exponential map given in \cite{Massart20} seems to omit the condition that a preimage $v\in\mc{P}re_x(y)$ must satisfy $1\in I_{x,v}$. Otherwise, the geodesic $\gamma_{x,v}:I_{x,v}\lto\mc{M}$ could leave the space before reaching $y$. Therefore, we characterize the set of preimages by relying on their work and considering this additional condition. We also give an explicit formula of the minimizing geodesic joining two points when it is unique and we specify the number of minimizing geodesics between two points otherwise. Finally, we compute the injection domain that was kept implicit in \cite{Massart20}. After Theorem \ref{thm:bw_geodesics_sym^+(n,k)}, we precisely specify the novelty of our result with respect to the reference work \cite{Massart20}. Then, we give examples to illustrate the possible cases for the number of preimages and logarithms.

\begin{definition}[Bures-Wasserstein metric on $\Sym^+(n,k)$]\cite{Massart20}
The Bures-Was\-serstein metric on $\Sym^+(n,k)$ is the quotient Riemannian metric induced by the submersion $\pi:X\in\R^{n\times k}_*\lmto XX^\top\in\Sym^+(n,k)$ and the Frobenius metric on $\R^{n\times k}$. Let $X\in\R^{n\times k}_*$ such that $XX^\top=\Sigma$ and let $V\in T_\Sigma\Sym^+(n,k)$. The vertical and horizontal spaces are:
\begin{enumerate}
    \itemsep0em
    \item (Vertical space) $\mc{V}_X=\ker d_X\pi=X\,\Skew(k)$,
    \item (Horizontal space) $\mc{H}_X=\{X(X^\top X)^{-1}F+X_\perp K,F\in\Sym(k),K\in\Mat(n-k,k)\}$ where $X_\perp\in\Mat(n,n-k)$ has orthonormal columns ($X_\perp^\top X_\perp=I_{n-k}$) that are orthogonal to the columns of $X$ ($X^\top X_\perp=0$).
\end{enumerate}
\end{definition}

\begin{theorem}[\chixthmhorliftmettitle]\label{thm:horlift_metric}
\chixthmhorliftmet
\begin{NDLR}
See the proof of Theorem \ref{thm:horlift_metric} in Appendix \ref{subsec:proof:thm:horlift_metric}.
\end{NDLR}
\end{theorem}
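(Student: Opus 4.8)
The plan is to work throughout with the Riemannian submersion $\pi:X\in\R^{n\times k}_*\lmto XX^\top\in\Sym^+(n,k)$, whose differential is $d_X\pi(\xi)=\xi X^\top+X\xi^\top=2\,\sym(\xi X^\top)$ for $\xi\in\R^{n\times k}$. For part (1), I would identify $T_\Sigma\Sym^+(n,k)$ with the image of $d_X\pi$. The inclusion $\subseteq$ is immediate: any $V=\xi X^\top+X\xi^\top$ is symmetric, and $X_\perp^\top VX_\perp=0$ because $X^\top X_\perp=0$ kills both terms. For $\supseteq$ I would compare dimensions: the constraint $X_\perp^\top VX_\perp=0$ is a vanishing symmetric $(n-k)\times(n-k)$ block, so $\{V\in\Sym(n)\mid X_\perp^\top VX_\perp=0\}$ has dimension $\tfrac{n(n+1)}2-\tfrac{(n-k)(n-k+1)}2=nk-\tfrac{k(k-1)}2=\dim\R^{n\times k}_*-\dim\Orth(k)=\dim\Sym^+(n,k)$. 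Since the image of $d_X\pi$ is contained in this space and has full dimension, they coincide. (Alternatively one can decompose any such $V$ via $P=X(X^\top X)^{-1}X^\top$ and $P_\perp=X_\perp X_\perp^\top$ as $V=PVP+PVP_\perp+P_\perp VP$, the fourth block $P_\perp VP_\perp=X_\perp(X_\perp^\top VX_\perp)X_\perp^\top$ being zero, and exhibit an explicit preimage.)

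For part (2), I would determine $V^\#_X$ as the unique $\xi\in\mc{H}_X$ with $d_X\pi(\xi)=V$. Writing the general horizontal vector $\xi=X(X^\top X)^{-1}F+X_\perp K$ with $F\in\Sym(k)$ and $K\in\Mat(n-k,k)$, I substitute into $\xi X^\top+X\xi^\top=V$ and project the identity onto the four blocks determined by $X$ and $X_\perp$. Using $X^\top X_\perp=0$ and $X_\perp^\top X_\perp=I_{n-k}$, the system decouples: the $X^\top(\cdot)X$ block gives $(X^\top X)F+F(X^\top X)=X^\top VX$, the $X_\perp^\top(\cdot)X$ block gives $K(X^\top X)=X_\perp^\top VX$, and the $X_\perp^\top(\cdot)X_\perp$ block is automatically $0$ by the tangent-space condition of part (1). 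Solving yields $F=\mc{S}_{X^\top X}(X^\top VX)$ and $K=X_\perp^\top VX(X^\top X)^{-1}$, which is exactly the claimed formula after rewriting $X_\perp K=X_\perp X_\perp^\top VX(X^\top X)^{-1}$ with $X_\perp X_\perp^\top=I_n-X(X^\top X)^{-1}X^\top$.

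For part (3), $g_\Sigma^{BW(n,k)}(V,V)=\tr\big((V^\#_X)^\top V^\#_X\big)$, and the orthogonality relations above collapse the cross terms, leaving $\tr\big(F(X^\top X)^{-1}F\big)+\tr(K^\top K)$. Since the quotient metric does not depend on the chosen square root in the fiber, I would evaluate at the convenient representative $X=UD^{1/2}$, so that $X^\top X=D$ and $X_\perp=U_\perp$ with $U_\perp U_\perp^\top=I_n-UU^\top$. The decisive step is to relate $F=\mc{S}_D\big(D^{1/2}(U^\top VU)D^{1/2}\big)$ to $Z_0=\mc{S}_D(U^\top VU)$: because $\mc{S}_D$ acts entrywise through division by $d_i+d_j$, one obtains $F=D^{1/2}Z_0D^{1/2}$, whence $\tr\big(FD^{-1}F\big)=\tr(Z_0DZ_0)=\tr(S\Sigma S)$ using $S=UZ_0U^\top$ and $\Sigma=UDU^\top$. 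For the remaining trace, $K=U_\perp^\top VUD^{-1/2}$ gives $\tr(K^\top K)=\tr\big(D^{-1}U^\top V(I_n-UU^\top)VU\big)=\tr\big(V\Sigma^-V(I_n-UU^\top)\big)$ after $\Sigma^-=UD^{-1}U^\top$ and a cyclic permutation. Summing the two contributions produces the stated metric.

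The main obstacle is precisely this last bridge: the horizontal lift naturally produces the Sylvester solution $\mc{S}_{X^\top X}(X^\top VX)$ attached to an arbitrary total-space representative $X$, whereas the target formula is expressed through the intrinsic object $S_{\Sigma,V}=U\mc{S}_D(U^\top VU)U^\top$. Reconciling the two relies on the entrywise action of $\mc{S}_D$ together with the freedom to choose $X=UD^{1/2}$. Along the way I would also justify the decomposition-independence asserted in the statement: $UU^\top$ is the orthogonal projector onto $\range\Sigma$ and is therefore intrinsic, while $S_{\Sigma,V}$ is characterized intrinsically as the unique symmetric matrix supported on $\range\Sigma$ solving $S\Sigma+\Sigma S=(UU^\top)V(UU^\top)$ (the restriction of the Sylvester operator to symmetric matrices on $\range\Sigma$ being invertible since $\Sigma$ is positive definite there).
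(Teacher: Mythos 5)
Your proposal is correct and follows essentially the same route as the paper's proof: solve for $F$ and $K$ by projecting $d_X\pi(\xi)=V$ onto the blocks determined by $X$ and $X_\perp$, specialize to $X=UD^{1/2}$, and use the entrywise identity $\mc{S}_D(D^{1/2}BD^{1/2})=D^{1/2}\mc{S}_D(B)D^{1/2}$ to land on $S_{\Sigma,V}$. The only departures are cosmetic: you get the reverse inclusion for the tangent space by a dimension count where the paper checks directly that $d_X\pi(V^\#_X)=V-X_\perp X_\perp^\top V X_\perp X_\perp^\top$ (giving both inclusions at once), and you compute the $k\times k$ Gram matrix $(V^\#_X)^\top V^\#_X$, whose cross terms vanish identically, rather than the $n\times n$ matrix $V^\#_X(V^\#_X)^\top$, whose cross terms vanish only under the trace; your explicit justification that $S_{\Sigma,V}$ is independent of the decomposition is a welcome detail the paper leaves implicit.
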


\begin{theorem}[\chixthmbwgeodsymnktitle]\label{thm:bw_geodesics_sym^+(n,k)}
~\\
Let $\Sigma,\Lambda\in\Sym^+(n,k)$ and $X,Y\in\R^{n\times k}_*$ such that $XX^\top=\Sigma$ and $YY^\top=\Lambda$. Let $U\in\St(n,k)$ and $D\in\Diag^+(k)$ such that $\Sigma=UDU^\top$.
\begin{enumerate}
    \item (Exponential map) For all $V\in T_\Sigma\Sym^+(n,k)$, the geodesic from $\Sigma$ with initial speed $V$ is $\gamma_{(\Sigma,V)}:t\in I_{\Sigma,V}\lmto\Sigma+tV+t^2W_{\Sigma,V}$, where $W_{\Sigma,V}=S_{\Sigma,V}\Sigma S_{\Sigma,V}+S_{\Sigma,V}V(I_n-UU^\top)+(I_n-UU^\top)VS_{\Sigma,V}+(I_n-UU^\top)V\Sigma^+ V(I_n-UU^\top)$ and $S_{\Sigma,V}=U\mc{S}_D(U^\top VU)U^\top$.
    
    \item (Definition interval) Let $F^0_{X,V}=\mc{S}_{X^\top X}((X^\top X)^{-1/2}X^\top VX(X^\top X)^{-1/2})$ and $M^0_{X,V}=(X^\top X)^{-3/2}X^\top V(I_n-X(X^\top X)^{-1}X^\top)VX(X^\top X)^{-3/2}\in\Sym(n)$. Let $\mc{E}_{\Sigma,V}=\{\lambda\in\sp(F^0_{X,V})|\,\ker(\lambda I_k-F^0_{X,V})\cap\ker(M^0_{X,V})\ne\{0\}\}\subseteq\sp(S_{\Sigma,V})$. If $\mc{E}_{\Sigma,V}$ is non-empty, then let $\lambda_+=\max\mc{E}_{\Sigma,V}$ and $\lambda_-=\min\mc{E}_{\Sigma,V}$. The definition interval of the geodesic $\gamma_{(\Sigma,V)}$ is the interval $I_{\Sigma,V}$ defined by:
    \begin{enumerate}[label=$\cdot$]
        \item $I_{\Sigma,V}=(-\frac{1}{\lambda_+},-\frac{1}{\lambda_-})$ if $\lambda_-<0<\lambda_+$,
        \item $I_{\Sigma,V}=(-\infty,-\frac{1}{\lambda_-})$ if $\lambda_-<0$ and $\lambda_+\ls 0$,
        \item $I_{\Sigma,V}=(-\frac{1}{\lambda_+},+\infty)$ if $\lambda_-\gs 0$ and $\lambda_+>0$,
        \item $I_{\Sigma,V}=\R$ if $\mc{E}_{\Sigma,V}$ is empty.
    \end{enumerate}
    
    Applying this to $X=UD^{1/2}$ without loss of generality, $F^0_{X,V}=\mc{S}_D(U^\top VU)$ and $M^0_{X,V}=D^{-1}U^\top V(I_n-UU^\top)VUD^{-1}$ which is a bit more tractable to compute $\mc{E}_{\Sigma,V}$.
    
    \item (Cut time) Let $\lambda_\maxi=\max\sp(F^0_{X,V})$ and $\lambda_\mini=\min\sp(F^0_{X,V})$. Note that if $\mc{E}_{\Sigma,V}\ne\emptyset$, then we have $(\lambda_-,\lambda_+)\subseteq(\lambda_\mini,\lambda_\maxi)$. The cut time is $t_{cut}(\Sigma,V)=-\frac{1}{\lambda_\mini}$ if $\lambda_\mini<0$ or $+\infty$ otherwise. Symmetrically, we have $t_{cut}(\Sigma,-V)=\frac{1}{\lambda_\maxi}$ if $\lambda_\maxi>0$ or $+\infty$ otherwise.
    
    \item (Preimages) We define the indexing set $\mc{I}^{\mc{P}re}_{X,Y}$ by:
    \begin{align*}
        \mc{I}^{\mc{P}re}_{X,Y}=\{R\in\Orth(n)|&H:=X^\top YR^\top\in\Sym(n)\mathrm{~and~}\\
        &\forall\mu<0,\ker(\mu I_k-(X^\top X)^{-1/2}H(X^\top X)^{-1/2})\\
        &\cap\ker(\mu^2 I_k-(X^\top X)^{-1/2}RY^\top YR^\top (X^\top X)^{-1/2})=\{0\}\}.
    \end{align*}
    
    For $R\in\mc{I}^{\mc{P}re}_{X,Y}$, we denote $H=H_{X,Y,R}=X^\top YR^\top$ so that $X^\top Y=HR$. Then the map $R\in\mc{I}^{\mc{P}re}_{X,Y}\lmto V=2\,\sym(XRY^\top)-2\Sigma\in\mc{P}re_\Sigma(\Lambda)$ is a bijection whose inverse is $V\in\mc{P}re_\Sigma(\Lambda)\lmto R=(Y^\top Y)^{-1}Y^\top(X+V^\#_X)\in\mc{I}^{\mc{P}re}_{X,Y}$.
    
    The geodesic joining $\Sigma$ to $\Lambda$ parametrized by $R\in\mc{I}^{\mc{P}re}_{X,Y}$ writes:
    \begin{equation}
        \forall t\in[0,1],\gamma_{\Sigma\to\Lambda}^R(t)=(1-t)^2\Sigma+t^2\Lambda+2t(1-t)\sym(XRY^\top).
    \end{equation}
    
    \item (Logarithms) Let $\mc{I}^{\mc{L}og}_{X,Y}=\{R\in\Orth(n)|H_{X,Y,R}=X^\top YR^\top\in\Cov(n)\}=\{R\in\Orth(n)|H_{X,Y,R}=(X^\top\Lambda X)^{1/2}\}=\{R\in\Orth(n)|\,X^\top Y=(X^\top\Lambda X)^{1/2}R\}\subseteq\mc{I}^{\mc{P}re}_{X,Y}$.
    
    Then, the map $R\in\mc{I}^{\mc{L}og}_{X,Y}\lmto V=2\,\sym(XRY^\top)-2\Sigma\in\mc{L}og_\Sigma(\Lambda)$ is a bijection whose inverse is $V\in\mc{L}og_\Sigma(\Lambda)\lmto R=(Y^\top Y)^{-1}Y^\top(X+V^\#_X)\in\mc{I}^{\mc{L}og}_{X,Y}$.
    
    \item (Logarithm map) Let $r=\rk(\Sigma\Lambda)=\rk(X^\top Y)=\rk(H)$.
    \begin{enumerate}
        \item If $r=k$, then there exists a unique logarithm of $\Lambda$ from $\Sigma$. In this case, the minimizing geodesic joining $\Sigma$ to $\Lambda$ writes:
        \small
        \begin{equation}
            \gamma_{\Sigma\to\Lambda}(t)=(1-t)^2\Sigma+t^2\Lambda+2t(1-t)\sym(\Sigma^{1/2}((\Sigma^{1/2}\Lambda\Sigma^{1/2})^{1/2})^-\Sigma^{1/2}\Lambda).
        \end{equation}
        \normalsize
        \item If $r=k-1$, then there exist exactly two logarithms of $\Lambda$ from $\Sigma$.
        
        \item If $r<k-1$, then there is an infinity of logarithms of $\Lambda$ from $\Sigma$.
    \end{enumerate}
    Therefore, the logarithm map is defined on $\mc{U}_\Sigma=\{\Lambda\in\Sym^+(n,k)|\rk(\Sigma\Lambda)=k\}$ and it writes $\Log_\Sigma:\Lambda\in\mc{U}_\Sigma\lmto 2\,\sym(\Sigma^{1/2}((\Sigma^{1/2}\Lambda\Sigma^{1/2})^{1/2})^-\Sigma^{1/2}\Lambda)-2\Sigma\in T_\Sigma\Sym^+(n,k)$.
\end{enumerate}
\begin{NDLR}
See the proof of Theorem \ref{thm:bw_geodesics_sym^+(n,k)} in Appendix \ref{subsec:proof:thm:bw_geodesics_sym^+(n,k)}.
\end{NDLR}
\end{theorem}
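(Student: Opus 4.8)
The plan is to push everything through the Riemannian submersion $\pi:X\in\R^{n\times k}_*\lmto XX^\top\in\Sym^+(n,k)$ and Theorem \ref{thm:geodesics_quotient}: a geodesic of the base is the projection of a horizontal geodesic upstairs, and since the Frobenius metric on $\R^{n\times k}_*$ is flat, its horizontal geodesics are exactly the affine lines $t\lmto X+tV^\#_X$, where $V^\#_X$ is the horizontal lift supplied by Theorem \ref{thm:horlift_metric}. For Part~1 I would therefore set $\gamma_{(\Sigma,V)}(t)=(X+tV^\#_X)(X+tV^\#_X)^\top$ and expand: the constant term is $\Sigma$, the linear term is $2\,\sym(V^\#_X X^\top)=d_X\pi(V^\#_X)=V$, and the quadratic term is $V^\#_X(V^\#_X)^\top$. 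Choosing the representative $X=UD^{1/2}$ (so that $X^\top X=D$) and using the scaling identity $\mc{S}_D(D^{1/2}AD^{1/2})=D^{1/2}\mc{S}_D(A)D^{1/2}$, the lift collapses to $V^\#_X=U\mc{S}_D(U^\top VU)D^{1/2}+(I_n-UU^\top)VUD^{-1/2}$; multiplying it by its transpose then reproduces the four summands of $W_{\Sigma,V}$ term by term (the outer products give $S_{\Sigma,V}\Sigma S_{\Sigma,V}$ and $(I_n-UU^\top)V\Sigma^- V(I_n-UU^\top)$, the cross terms give $S_{\Sigma,V}V(I_n-UU^\top)$ and its transpose).

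For the definition interval (Part~2) the curve stays in $\Sym^+(n,k)$ exactly while $X+tV^\#_X$ keeps full rank, i.e. while the Gram matrix $G(t)=(X+tV^\#_X)^\top(X+tV^\#_X)$ is invertible. Conjugating by $(X^\top X)^{-1/2}$ and using $X^\top V^\#_X=\mc{S}_{X^\top X}(X^\top VX)$ together with the scaling identity $\mc{S}_D(D^{-1/2}BD^{-1/2})=D^{-1/2}\mc{S}_D(B)D^{-1/2}$, I would show the normalized Gram matrix takes the clean form $(I_k+tF^0_{X,V})^2+t^2 M^0_{X,V}$ with $M^0_{X,V}\gs 0$. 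Since both summands are positive semidefinite, $G(t)$ is singular iff some nonzero $w$ satisfies simultaneously $(I_k+tF^0_{X,V})w=0$ and $M^0_{X,V}w=0$, that is iff $-1/t\in\sp(F^0_{X,V})$ with an eigenvector in $\ker M^0_{X,V}$, equivalently $-1/t\in\mc{E}_{\Sigma,V}$. The rank-drop times are thus exactly $\{-1/\lambda\mid\lambda\in\mc{E}_{\Sigma,V}\}$, and the connected component of $0$ in their complement produces the four cases with boundaries $-1/\lambda_+$ and $-1/\lambda_-$. Reconciling the naive eigenvalue bound with the quadratic correction $M^0_{X,V}$ is the main obstacle, and it is precisely the subtlety \cite{Massart20} addresses. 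For the cut time (Part~3) I would then show the geodesic is globally minimizing up to $-1/\lambda_\mini$ and not beyond, transporting the minimality statement of \cite{Massart20} through the submersion.

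The remaining parts pivot on the endpoint identity $\gamma_{(\Sigma,V)}(1)=(X+V^\#_X)(X+V^\#_X)^\top$. Thus $V\in\mc{P}re_\Sigma(\Lambda)$ iff $(X+V^\#_X)(X+V^\#_X)^\top=YY^\top$, which by the non-symmetric square-root fact recalled in the preliminaries holds iff $X+V^\#_X=YR^\top$ for some $R\in\Orth(k)$. Two constraints then cut out the admissible $R$. First, $V^\#_X=YR^\top-X$ must be horizontal; since the $X_\perp$-part of any matrix is automatically horizontal, this reduces to requiring the column-space component $X(X^\top X)^{-1}(X^\top YR^\top-X^\top X)$ to have symmetric coordinate, i.e. $X^\top YR^\top\in\Sym(k)$, which is the symmetry condition defining $H$. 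Second, the requirement $1\in I_{\Sigma,V}$ translates, through Part~2, into the kernel condition for $\mu<0$. Passing from $V^\#_X$ to $V=2\,\sym(V^\#_X X^\top)=2\,\sym(XRY^\top)-2\Sigma$ and inverting yields the bijection of Part~4. For the logarithms (Part~5) I would minimize $\|V^\#_X\|_2^2=\tr(\Sigma+\Lambda)-2\,\tr(X^\top YR^\top)$ over admissible $R$; maximizing $\tr(H)$ with $H=X^\top YR^\top\gs 0$ forces $H=(X^\top\Lambda X)^{1/2}$, whose value recovers $d^{\mathrm{BW}}(\Sigma,\Lambda)$ from Definition \ref{def:BW_distance} and which automatically meets the $\mu<0$ condition. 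Finally Part~6 is a counting argument: the equation $X^\top Y=(X^\top\Lambda X)^{1/2}R$ pins $R$ down on the $r$-dimensional range of $(X^\top\Lambda X)^{1/2}$ and leaves an $\Orth(k-r)$ ambiguity on its orthogonal complement, so the number of logarithms is $\card\,\Orth(k-r)$, namely $1,2,\infty$ according as $k-r$ equals $0,1$ or is $\gs 2$, i.e. $r=k$, $r=k-1$, or $r<k-1$.
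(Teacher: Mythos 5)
Your overall architecture is the same as the paper's: push everything through the submersion $\pi:X\lmto XX^\top$, expand $(X+tV^\#_X)(X+tV^\#_X)^\top$ using the horizontal lift of Theorem \ref{thm:horlift_metric}, detect rank drops through the Gram matrix, isolate the logarithms among the preimages via positivity of $H$, and count the orthogonal completions of $R$. Two of your steps are in fact more self-contained than the paper's and are correct: the identity $(X^\top X)^{-1/2}G(t)(X^\top X)^{-1/2}=(I_k+tF^0_{X,V})^2+t^2M^0_{X,V}$, from which singularity of a sum of two PSD matrices forces a common kernel vector, is a direct derivation of what the paper imports from \cite[Proposition 3.2]{Massart20}; and your description of the logarithms as a torsor under $\Orth(k-r)$ acting on the completion of the first $r$ rows of $R$ replaces the paper's explicit reduction to diagonal $X^\top Y$ and gives the counts $1,2,\infty$ more conceptually.

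There are, however, two genuine gaps. First, Part 3 is not actually argued. Saying you would ``transport the minimality statement of \cite{Massart20} through the submersion'' does not explain why the cut time is $-1/\lambda_\mini$ with $\lambda_\mini=\min\sp(F^0_{X,V})$ rather than $-1/\lambda_-$ with $\lambda_-=\min\mc{E}_{\Sigma,V}$: these differ exactly when the bottom eigenvector of $F^0_{X,V}$ does not lie in $\ker M^0_{X,V}$, in which case the geodesic survives past $-1/\lambda_\mini$ but stops being minimizing there. The missing step (the paper's argument) is that for $t>0$ in $I_{\Sigma,V}$ one has $X^\top X+tX^\top V^\#_X=(X^\top X)^{1/2}(I_k+tF^0_{X,V})(X^\top X)^{1/2}$, which is precisely the matrix $H$ associated with the pair $(\Sigma,\gamma_{(\Sigma,V)}(t))$; by Part 5 the segment $[0,t]$ is minimizing iff this is PSD, iff $1+t\lambda_\mini\gs 0$. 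Without some version of this chain you have no proof of the one item the paper identifies as entirely new. Second, in Part 6(a) you only do the counting and never derive the closed-form expression $\sym(\Sigma^{1/2}((\Sigma^{1/2}\Lambda\Sigma^{1/2})^{1/2})^-\Sigma^{1/2}\Lambda)$, which requires computing $XRY^\top=XH^{-1}X^\top\Lambda=X(X^\top\Lambda X)^{-1/2}X^\top\Lambda$ and then converting to the pseudo-inverse form via the choice $X=UD^{1/2}$; this formula is part of the claimed statement and does not follow from the uniqueness assertion alone.
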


\begin{remark}
Let us clarify our contributions in Theorem \ref{thm:bw_geodesics_sym^+(n,k)} with respect to the reference paper \cite{Massart20}.
\begin{enumerate}
    \itemsep0em
    \item (Exponential map) The formula is new. Only the exponential map of a horizontal vector in the total space $\R^{n\times k}_*$ was given in \cite{Massart20}.
    \item (Definition interval) The definition interval was formulated in the total space $\R^{n\times k}_*$ in \cite{Massart20}. The novelty here is to formulate it in function of $V$ thanks to the horizontal lift.
    \item (Cut time) This is new.
    \item (Preimages) Note that $\mc{I}^{\mc{P}re}_{X,Y}\subseteq\mc{I}^{\mc{S}ol}_{X,Y}:=\{R\in\Orth(n)|X^\top YR^\top\in\Sym(n)\}$. The set $\mc{I}^{\mc{S}ol}_{X,Y}$ indexes the solutions of the logarithm equation $\Exp_\Sigma(V)=\Lambda$ witout taking into account the condition $1\in I_{\Sigma,V}$. The fact that there may exist $R\in\mc{I}^{\mc{S}ol}_{X,Y}$ that does not lead to a preimage seems to be forgotten in \cite{Massart20}. Indeed, the curve ``$\gamma^R$" may hit the boundary before reaching $\Lambda$. Therefore, among these candidate $R$'s such that $X^\top YR^\top\in\Sym(n)$, we specify the set of $R$'s that really define a geodesic from $\Sigma$ to $\Lambda$ in the manifold $\Sym^+(n,k)$, based on the condition underlying the definition interval.
    \item (Logarithms) It was stated in \cite{Massart20} that if $R$ leads to a logarithm then $H\gs 0$, and that if $H\gs 0$, then $R$ leads to a preimage which is additionally a logarithm. However, it is not stated clearly that in this case, $H$ has to be equal to $(X^\top\Lambda X)^{1/2}$. It is important for the next point though. The expression of the logarithms is new, although very straightforward. It was not given in \cite{Massart20} because they prefer to work in the total space $\R^{n\times k}_*$.
    \item (Logarithm map) It was stated that the logarithm is unique if and only if $r=k$ in \cite{Massart20}. However, it was not stated that there are exactly two logarithms when $r=k-1$ and that there is an infinity of \textit{logarithms} when $r<k-1$. The expression of the minimizing geodesic when it is unique is also new.
\end{enumerate}
In other words, we have a minor contribution on the reformulation in $\Sym^+(n,k)$ of results stated in the total space, and more important contributions on the injection domain (cut time), the expression of the minimizing geodesic when it is unique and the clarification between the three sets $\mc{I}^{\mc{L}og}_{X,Y}\subseteq\mc{I}^{\mc{P}re}_{X,Y}\subseteq\mc{I}^{\mc{S}ol}_{X,Y}:=\{R\in\Orth(n)|X^\top YR^\top\in\Sym(n)\}$. We give several examples below to illustrate the differences between these three sets.
\end{remark}

\begin{remark}
The definitions of the sets $\mc{E}_{\Sigma,V}$ and $\mc{I}^{\mc{P}re}_{\Sigma,V}$ might seem intricate. It is difficult to simplify them though. Nevertheless, the set $\mc{E}_{\Sigma,V}$ is easy to determine numerically. Analogously, it is easy to determine numerically if a candidate $R\in\mc{I}^{\mc{S}ol}_{X,Y}$ given in \cite[Lemma 4.1]{Massart20} belongs to $\mc{I}^{\mc{P}re}_{X,Y}$.
\end{remark}

\begin{examples}
Let us denote $R_\theta^-=\left(\begin{smallmatrix}-\cos\theta & \sin\theta\\\sin\theta & \cos\theta\end{smallmatrix}\right)\in\Orth(2)\cap\Sym(2)$.
\begin{enumerate}
    \item Let $\Sigma=\left(\begin{smallmatrix}1 & 0 & 0\\0 & 1 & 0\\0 & 0 & 0\end{smallmatrix}\right)$ and $\Lambda=\left(\begin{smallmatrix}4 & 0 & 0\\0 & 4 & 0\\0 & 0 & 0\end{smallmatrix}\right)=4\Sigma$ in $\Sym^+(3,2)$ with $r=2$. Then, let $X=\left(\begin{smallmatrix}1 & 0\\0 & 1\\0 & 0\end{smallmatrix}\right)$ and $Y=\left(\begin{smallmatrix}2 & 0\\0 & 2\\0 & 0\end{smallmatrix}\right)$. Then $XX^\top=\Sigma$, $YY^\top=\Lambda$ and $X^\top Y=2I_2$. Thus the candidate pairs for $(H,R)$ are $(2I_2,I_2)$, $(-2I_2,-I_2)$ and $(2R_\theta^-,R_\theta^-)$. One can show that:
    \begin{enumerate}
        \item $R=I_2$ leads to the unique minimizing geodesic $\gamma_{\Sigma\to\Lambda}(t)=(1+t)^2\Sigma\in\Sym^+(3,2)$ for $t\in(-1,+\infty)\supset[0,1]$,
        \item there is no non-minimizing geodesic,
        \item $R=-I_2$ or $R=R_\theta^-$ lead to curves that hit $\Sym^+(3,1)$ at $t=\frac{1}{3}<1$, e.g. $R=-I_2$ leads to the curve $\gamma(t)=(1-3t)^2\Sigma\in\Sym^+(3,2)$ only for $t\in(-\infty,\frac{1}{3})$.
    \end{enumerate}
    
    \item Let $\Sigma=\left(\begin{smallmatrix}1 & 0 & 0\\0 & 1 & 0\\0 & 0 & 0\end{smallmatrix}\right)$ and $\Lambda=\left(\begin{smallmatrix}1 & 0 & 0\\0 & 1 & 1\\0 & 1 & 1\end{smallmatrix}\right)$ in $\Sym^+(3,2)$ with $r=2$ again. Then, let $X=\left(\begin{smallmatrix}1 & 0\\0 & 1\\0 & 0\end{smallmatrix}\right)$ and $Y=\left(\begin{smallmatrix}1 & 0\\0 & 1\\0 & 1\end{smallmatrix}\right)$. Then $X^\top Y=I_2$ so the candidate pairs for $(H,R)$ are $(I_2,I_2)$, $(-I_2,-I_2)$ and $(R_\theta^-,R_\theta^-)$. One can show that:
    \begin{enumerate}
        \item $R=I_2$ leads to the geodesic $\gamma^{I_2}_{\Sigma\to\Lambda}(t)=\left(\begin{smallmatrix}1 & 0 & 0\\0 & 1 & t\\0 & t & t^2\end{smallmatrix}\right)\in\Sym^+(3,2)$ for $t\in\R$ which is minimizing on $\left[-\frac{2}{1+\sqrt{2}},\frac{2}{\sqrt{2}-1}\right]\supset[0,1]$,
        \item $R=R_\theta^-$ for $\theta\ne 0$ lead to non-minimizing geodesics, e.g. $R=R_\pi^-$ leads to the curve $\gamma^{R_\pi^-}_{\Sigma\to\Lambda}(t)=\left(\begin{smallmatrix}1 & 0 & 0\\0 & (1-2t)^2 & -t(1-2t)\\0 & -t(1-2t) & t^2\end{smallmatrix}\right)\in\Sym^+(3,2)$ for $t\in[0,1]$,
        \item $R=R_0^-$ and $R=-I_2$ lead to curves that hit $\Sym^+(3,1)$ at $t=\frac{1}{2}<1$, e.g. $R=R_0^-$ leads to the curve $\gamma(t)=\left(\begin{smallmatrix}(1-2t)^2 & 0 & 0\\ 0 & 1 & t\\ 0 & t & t^2\end{smallmatrix}\right)\in\Sym^+(3,2)$ only for $t\in(\infty,\frac{1}{2})$.
    \end{enumerate}
    
    \item Let $\Sigma=\left(\begin{smallmatrix}1 & 0 & 0\\0 & 1 & 0\\0 & 0 & 0\end{smallmatrix}\right)$ and $\Lambda=\left(\begin{smallmatrix}1 & 0 & 0\\0 & 0 & 0\\0 & 0 & 1\end{smallmatrix}\right)$ in $\Sym^+(3,2)$ with $r=1$. Then, let $X=\left(\begin{smallmatrix}1 & 0\\0 & 1\\0 & 0\end{smallmatrix}\right)$ and $Y=\left(\begin{smallmatrix}1 & 0\\0 & 0\\0 & 1\end{smallmatrix}\right)$. Then $X^\top Y=\left(\begin{smallmatrix}1 & 0\\0 & 0\end{smallmatrix}\right)$ so the candidate values of $R$ are $\Diag(\pm 1,\pm 1)$. One can show that:
    \begin{enumerate}
        \item $R^0_\pm=\Diag(1,\pm 1)$ lead to two minimizing geodesics whose expressions are $\gamma^{R^0_\pm}_{\Sigma\to\Lambda}(t)=\left(\begin{smallmatrix}1 & 0 & 0\\0 & (1-t)^2 & \pm t(1-t)\\0 & \pm t(1-t) & t^2\end{smallmatrix}\right)\in\Sym^+(3,2)$ for $t\in[0,1]$,
        \item there is no non-minimizing geodesic,
        \item $R^1_\pm=\Diag(-1,\pm 1)$ lead to curves that hit $\Sym^+(3,1)$ at $t=\frac{1}{2}<1$, namely $\gamma^{R^1_\pm}(t)=\left(\begin{smallmatrix}(1-2t)^2 & 0 & 0\\0 & (1-t)^2 & \pm t(1-t)\\0 & \pm t(1-t) & t^2\end{smallmatrix}\right)\in\Sym^+(3,2)$ only for $t\in(-\infty,\frac{1}{2})$.
    \end{enumerate}
    
    \item Let $\Sigma=\left(\begin{smallmatrix}I_2 & 0\\0 & 0\end{smallmatrix}\right)$ and $\Lambda=\left(\begin{smallmatrix}0 & 0\\0 & I_2\end{smallmatrix}\right)$ in $\Sym^+(4,2)$ with $r=0$. Then, let $X=\left(\begin{smallmatrix}I_2\\0\end{smallmatrix}\right)$ and $Y=\left(\begin{smallmatrix}0\\ I_2\end{smallmatrix}\right)$. Then $X^\top Y=0$ so every $R\in\Orth(2)$ is a candidate. One can show that any $R\in\Orth(2)$ leads to a minimizing geodesic $\gamma_{\Sigma\to\Lambda}^R(t)=\left(\begin{smallmatrix}(1-t)^2I_2 & t(1-t)R\\t(1-t)R^\top & t^2I_2\end{smallmatrix}\right)\in\Sym^+(4,2)$ for $t\in\R$.
\end{enumerate}
\end{examples}

In the two last sections, we studied the geodesics and the minimizing geodesics within each stratum. In the next section, we turn to the study of the minimizing geodesic segments in the Bures-Wasserstein metric space $(\Cov(n),d^{\mathrm{BW}})$, that is between any two covariance matrices of any rank.

\section{Minimizing geodesics of the Bures-Wasserstein distance on $\Cov(n)$}\label{sec:bures-wasserstein_cov(n)}

In this section, we completely characterize the Bures-Wasserstein minimizing geodesic segments between any two covariance matrices. We show that they have constant rank on the interior of the segment and we give an explicit expression. Moreover, we show that the number of geodesics depends on the ranks of the extremities and we give this number in all cases. More precisely, we show that minimizing geodesics between $\Sigma$ and $\Lambda\in\Cov(n)$ are parametrized by the closed unit ball of $\R^{(k-r)\times(l-r)}$ for the spectral norm, where $k,l,r$ are the respective ranks of $\Sigma,\Lambda,\Sigma\Lambda$. We also give the number of geodesics of minimal rank. Finally, we show that there exists a canonical geodesic with an expression that does not depend on the ranks of the extremities. This expression coincides with the formula in low rank when the minimizing geodesic is unique and with the formula in full rank. The proofs are deferred to the Supplementary Material.

\subsection{Characterization of minimizing geodesics}\label{subsec:bures-wasserstein_characterization_geodesics}

The following lemma states that the rank of a minimizing geodesic segment is constant on the interior of the segment. Then, Theorem \ref{thm:minimizing_geodesics} characterizes the Bures-Wasserstein minimizing geodesic segments.

\begin{lemma}[Rank of minimizing curve]\label{lemma:rank}
Let $\gamma:[0,1]\lto\Cov(n)$ be a minimizing curve from $\Sigma$ to $\Lambda$. Then $\gamma$ has constant rank $p\gs\max(\rk(\Sigma),\rk(\Lambda))$ on $(0,1)$.
\end{lemma}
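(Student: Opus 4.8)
The plan is to combine the semicontinuity of rank on $\Cov(n)$ with the lower-semicontinuity-type control provided by Lemma~\ref{lemma:minimizing_geodesics_strata}, which governs how the isotropy type (equivalently, the rank) of a point on a minimizing curve in a Riemannian orbit space compares to the types at the endpoints. Recall from Section~\ref{sec:bures-wasserstein_geometry} that the strata of $\Cov(n)$ are exactly the fixed-rank sets $\Sym^+(n,k)$, that these are the isotropy strata $\mc{M}_{(H_k)}/\Orth(n)$, and crucially that $(H_k)\ls(H_l)$ if and only if $k\gs l$. Thus the partial order on isotropy types translates into the reverse order on ranks.

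First I would apply Lemma~\ref{lemma:minimizing_geodesics_strata} directly: for the minimizing curve $\gamma:[0,1]\lto\Cov(n)$, writing $(\mc{M}/G)_{(H_t)}$ for the stratum containing $\gamma(t)$, we get $(H_t)\ls(H_0)$ and $(H_t)\ls(H_1)$ for all $t\in(0,1)$. Translating through the equivalence $(H_k)\ls(H_l)\iff k\gs l$, this says precisely that $\rk(\gamma(t))\gs\rk(\gamma(0))=\rk(\Sigma)$ and $\rk(\gamma(t))\gs\rk(\gamma(1))=\rk(\Lambda)$ for every interior $t$. Hence $\rk(\gamma(t))\gs\max(\rk(\Sigma),\rk(\Lambda))$ on $(0,1)$, which establishes the stated lower bound.

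The remaining and main task is to show that the rank is in fact \emph{constant} on the open interval, not merely bounded below. The key step here is a reapplication of the same lemma on subsegments: fix any $s,s'\in(0,1)$ with $s<s'$. Since $\gamma$ is minimizing on $[0,1]$, its restriction $\gamma_{|[s,s']}$ is again a minimizing curve (minimality passes to subsegments by the additivity of length, Lemma~\ref{lemma:length}). Applying Lemma~\ref{lemma:minimizing_geodesics_strata} to this subsegment gives $\rk(\gamma(t))\gs\max(\rk(\gamma(s)),\rk(\gamma(s')))$ for all $t\in(s,s')$. I would use this to rule out any drop in rank at an interior point: if the rank achieved some value $p$ on a dense subset of $(0,1)$ but dropped to $p'<p$ at an interior time $t_0$, one can choose $s<t_0<s'$ with $\gamma(s)$ or $\gamma(s')$ of rank $p$, forcing $\rk(\gamma(t_0))\gs p$, a contradiction.

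The hard part will be organizing this into a clean argument that the rank is genuinely constant rather than merely lacking interior local minima. The natural route is to let $p=\max_{t\in(0,1)}\rk(\gamma(t))$ (finite, as ranks are bounded by $n$) and pick $t^*\in(0,1)$ attaining it; then for any other interior $t$, apply the subsegment version on an interval containing both $t$ and $t^*$ to conclude $\rk(\gamma(t))\gs p$, whence $\rk(\gamma(t))=p$. One subtlety to handle carefully is that $p$ is defined as a maximum over the open interval and I must ensure it is attained (or work with the supremum and a limiting argument); since rank takes finitely many integer values, the supremum is attained, so this is not a genuine obstruction but does need to be stated. Combining the lower bound from the first application with this constancy argument yields that $\gamma$ has constant rank $p\gs\max(\rk(\Sigma),\rk(\Lambda))$ on $(0,1)$, as claimed.
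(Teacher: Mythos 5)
Your argument is correct and follows essentially the same route as the paper: both rely on Lemma~\ref{lemma:minimizing_geodesics_strata} applied to subsegments of $\gamma$ having a rank-maximizing time as an endpoint, together with the order reversal $(H_k)\ls(H_l)\iff k\gs l$. The only cosmetic difference is that the paper takes the maximum over the closed interval $[0,1]$ and applies the lemma once to $[0,t_0]$ and once to $[t_0,1]$, which packages your lower bound and constancy steps into a single argument; just make sure, when you invoke the subsegment version, that the maximizing time $t^*$ sits at an \emph{endpoint} of the chosen subinterval (the lemma only bounds interior ranks by endpoint ranks), as you correctly do in your displayed mechanism with $s<t_0<s'$.
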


\begin{proof}
Let $p=\max_{t\in[0,1]}\rk(\gamma(t))$ and let $t_0\in[0,1]$ such that $\rk(t_0)=p$. By Lemma \ref{lemma:minimizing_geodesics_strata}, for all $t\in(0,t_0)\cup(t_0,1)$, $(H_{\rk(\gamma(t))})\ls H_{\rk(\gamma(t_0))}$ so $\rk(\gamma(t))\gs\rk(\gamma(t_0))=p$ so $\rk(\gamma(t))=p$.
\end{proof}

\begin{theorem}[\chixthmmingeodtitle]\label{thm:minimizing_geodesics}
\chixthmmingeod\\
\begin{NDLR}
See the proof of Theorem \ref{thm:minimizing_geodesics} in Appendix \ref{subsec:proof:thm:minimizing_geodesics}.
\end{NDLR}
\end{theorem}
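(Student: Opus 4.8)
The plan is to work upstairs, in the total space $\Mat(n)$ of the submersion $\pi:X\lmto XX^\top$. Since $\Mat(n)$ is Euclidean and complete, its minimizing geodesics are the affine segments $t\lmto(1-t)A+tB$, and the whole statement amounts to matching these with registrations $R\in\Orth(n)$. First I would record the registration computation. For $R\in\Orth(n)$ the segment $c_R:t\lmto(1-t)X+tYR^\top$ has squared Euclidean length $\|YR^\top-X\|_2^2=\tr(\Sigma)+\tr(\Lambda)-2\tr(H_{X,Y,R})$, where $H_{X,Y,R}=X^\top YR^\top$ satisfies $H_{X,Y,R}H_{X,Y,R}^\top=X^\top\Lambda X$ regardless of $R$. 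By the non-symmetric square root fact, as $R$ ranges over $\Orth(n)$ the matrix $H_{X,Y,R}$ ranges over all $M$ with $MM^\top=X^\top\Lambda X$, and a short block argument shows $\tr(M)\ls\tr((X^\top\Lambda X)^{1/2})$ with equality iff $M=(X^\top\Lambda X)^{1/2}\gs0$. Hence $\|YR^\top-X\|_2=d^{\mathrm{BW}}(\Sigma,\Lambda)$ (by Definition \ref{def:BW_distance}) exactly when $H_{X,Y,R}\in\Cov(n)$, i.e. when $X$ and $YR^\top$ are registered; this simultaneously proves the ``Moreover'' identity $H_{X,Y,R}=(X^\top\Lambda X)^{1/2}$.

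For (ii)$\Rightarrow$(i), I would expand $\pi\circ c_R$, obtaining $\pi(c_R(t))=(1-t)^2\Sigma+t^2\Lambda+2t(1-t)\sym(XRY^\top)=\gamma_{\Sigma\to\Lambda}^R(t)$ since $(YR^\top)(YR^\top)^\top=\Lambda$. When $H_{X,Y,R}\in\Cov(n)$ the endpoints $X=c_R(0)$ and $YR^\top=c_R(1)$ are registered by the first step, and $c_R$ is a minimizing geodesic of Euclidean $\Mat(n)$, so Lemma \ref{lemma:length_quotient} shows $\gamma_{\Sigma\to\Lambda}^R=\pi\circ c_R$ is globally minimizing with length $d^{\mathrm{BW}}(\Sigma,\Lambda)$. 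Constant speed then follows by applying the additivity of length (Lemma \ref{lemma:length}) upstairs and downstairs at once, so $\gamma_{\Sigma\to\Lambda}^R$ is a minimizing geodesic segment, whose constant rank $p\gs\max(k,l)$ on $(0,1)$ is exactly Lemma \ref{lemma:rank}.

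The converse (i)$\Rightarrow$(ii) is the crux and is a lifting argument. Given a minimizing geodesic $\gamma$, Lemma \ref{lemma:rank} gives it constant rank $p$ on $(0,1)$, so $\gamma|_{(0,1)}$ is a geodesic of the single manifold $\Sym^+(n,p)$. Using the Riemannian submersion $\R^{n\times p}_*\lto\Sym^+(n,p)$ and Theorem \ref{thm:geodesics_quotient}, I would horizontally lift it; as a geodesic of the Euclidean total space this lift is an affine segment $t\lmto A+tB$ (padding $\R^{n\times p}$ into $\Mat(n)$ by zero columns), and it remains of full rank $p$ on $(0,1)$ because $\gamma$ does. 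Continuity of $\pi$ and $\gamma$ forces $AA^\top=\Sigma$ and $(A+B)(A+B)^\top=\Lambda$, so replacing $X$ by $A$ in its orbit I may write $A+B=YR^\top$ for some $R\in\Orth(n)$, giving $\gamma=\pi\circ c_R=\gamma_{\Sigma\to\Lambda}^R$. Length preservation of the horizontal lift together with continuity of length yields $\|B\|_2=L(\gamma)=d^{\mathrm{BW}}(\Sigma,\Lambda)$, so $X$ and $YR^\top$ are registered and, by the first step, $H_{X,Y,R}\in\Cov(n)$, which is (ii).

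The hard part will be this last lift across strata: the horizontal lift is clean only inside the interior stratum $\Sym^+(n,p)$, while the endpoints $\Sigma,\Lambda$ sit in lower strata, so I must control both the affine lift and the length bookkeeping up to $t\in\{0,1\}$ and check that the possibly rank-deficient limits $A$ and $A+B$ land in the prescribed orbits $\pi^{-1}(\Sigma)$ and $\pi^{-1}(\Lambda)$. A clean alternative that packages the boundary issue is to observe that $\pi$ is a submetry of the complete space $\Mat(n)$ and invoke the general geodesic-lifting property of submetries, which directly produces the horizontal affine lift of any minimizing geodesic of $\Cov(n)$.
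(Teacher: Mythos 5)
Your proof is correct and follows the same architecture as the paper's: reduce to the interior stratum via Lemma \ref{lemma:rank}, horizontally lift $\gamma_{|(0,1)}$ to an affine segment in the Euclidean total space, recover the endpoints by continuity, and descend with Lemma \ref{lemma:length_quotient}. The one point of genuine divergence is how you establish $H_{X,Y,R}\in\Cov(n)$ in the direction (i)$\Rightarrow$(ii). The paper applies the logarithm characterization of Theorem \ref{thm:bw_geodesics_sym^+(n,k)} to interior subsegments, obtaining $c_0(a)^\top c_0(b)\in\Cov(p)$ for all $[a,b]\subset(0,1)$, and then lets $a\to 0$, $b\to 1$, using closedness of $\Cov(p)$. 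You instead push the length bookkeeping to the endpoints, getting $\|B\|_2=L(\gamma)=d^{\mathrm{BW}}(\Sigma,\Lambda)$, so that the lifted chord is a registered pair, and conclude $H\gs 0$ (and $H=(X^\top\Lambda X)^{1/2}$) from the equality case of the Procrustes trace inequality $\tr(M)\ls\tr\bigl((X^\top\Lambda X)^{1/2}\bigr)$ over all $M$ with $MM^\top=X^\top\Lambda X$. Your route is more self-contained — it needs only Definition \ref{def:BW_distance} and the orbit description of non-symmetric square roots, not the stratum-level logarithm theorem — whereas the paper localizes the positivity argument inside the open stratum and uses only a soft limiting argument at the boundary. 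The boundary issue you flag as the hard part is handled in the paper exactly as you propose (limits of the affine lift, not a general submetry lemma), so no gap remains; your derivation of constant speed from additivity of length upstairs and downstairs is also a slightly more direct version of the paper's continuity argument.
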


\subsection{Number of minimizing geodesics}\label{subsec:bures-wasserstein_number_geodesics}

In this section, we count the number of minimizing geodesic segments between two covariance matrices. We start with an elementary lemma.

\begin{lemma}[\chixlemmaelemalgtitle]\label{lemma:bures-wasserstein_elem_algebra}
\chixlemmaelemalg
\begin{NDLR}
See the proof of Lemma \ref{lemma:bures-wasserstein_elem_algebra} in Appendix \ref{subsec:proof:lemma:bures-wasserstein_elem_algebra}.
\end{NDLR}
\end{lemma}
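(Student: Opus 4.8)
The plan is to prove the two parts almost independently, both reducing to one rank identity for products carrying a Gram-type factor.

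For part 1, I would first isolate the following sublemma: for every $X\in\Mat(n)$ and every conformable matrix $B$, one has $\ker(X^\top B)=\ker(XX^\top B)$, hence $\rk(X^\top B)=\rk(XX^\top B)$. The only nontrivial inclusion is that $XX^\top Bw=0$ forces $X^\top Bw=0$, which follows from the positive-semidefiniteness trick $\|X^\top Bw\|_2^2=(Bw)^\top XX^\top(Bw)=0$. With this sublemma, and using $\Sigma=XX^\top$, $\Lambda=YY^\top$ together with the basic fact recalled in the introduction that $\rk(X)=\rk(XX^\top)$, I would compute
$$\rk(\Sigma\Lambda)=\rk(XX^\top\,YY^\top)=\rk(X^\top\,YY^\top),$$
applying the sublemma with $B=YY^\top$. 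Transposing and applying the sublemma a second time with the roles of $X$ and $Y$ exchanged (now $B=X$) gives
$$\rk(X^\top YY^\top)=\rk(YY^\top X)=\rk(Y^\top X)=\rk(X^\top Y),$$
where I use repeatedly that rank is invariant under transposition. This yields $r=\rk(X^\top Y)$. One must take care to apply each transpose to the correct factor so that the two uses of the sublemma compose to produce exactly $\rk(X^\top Y)$ and not some other product.

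For part 2, I would use the factorization $r=\rk(\Sigma\Lambda)=\rk(X^\top Y)$ from part 1 and read $X^\top Y$ as the composition $\R^n\xrightarrow{Y}\R^n\xrightarrow{X^\top}\R^n$. Since $\rk(Y)=l$, the space $\range(Y)$ has dimension $l$, and since $\rk(X^\top)=\rk(X)=k$, we have $\dim\ker(X^\top)=n-k$. Rank-nullity for the restriction of $X^\top$ to $\range(Y)$ gives $r=l-\dim(\range(Y)\cap\ker(X^\top))$. Bounding $\dim(\range(Y)\cap\ker(X^\top))\ls\dim\ker(X^\top)=n-k$ then yields $r\gs l-(n-k)$, that is $l-r\ls n-k$. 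Equivalently, one may simply invoke Sylvester's rank inequality $\rk(X^\top Y)\gs\rk(X^\top)+\rk(Y)-n=k+l-n$.

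I do not anticipate any real obstacle: the statement is genuinely elementary, and part 2 is immediate once part 1 supplies the identity $r=\rk(X^\top Y)$. The single delicate point is stating the sublemma precisely as an equality of right null spaces (as maps in $w$), so that the chained rank identities in part 1 are legitimate rather than mere inequalities.
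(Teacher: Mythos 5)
Your proof is correct. For part 1 you take a genuinely different, and arguably cleaner, route than the paper: your key sublemma is the kernel identity $\ker(X^\top B)=\ker(XX^\top B)$ (via the positive-semidefiniteness trick $\|X^\top Bw\|_2^2=(Bw)^\top XX^\top Bw$), applied twice with a transposition in between, which yields the chain of \emph{equalities} $\rk(\Sigma\Lambda)=\rk(X^\top YY^\top)=\rk(YY^\top X)=\rk(Y^\top X)=\rk(X^\top Y)$ directly for arbitrary square factors $X,Y\in\Mat(n)$. The paper instead passes through the reduced full-rank factorizations $X_0\in\R^{n\times k}_*$, $Y_0\in\R^{n\times l}_*$ and sandwiches the rank by two opposite inequalities: $\rk(\Sigma\Lambda)\ls\rk(X^\top Y)$ from $\Sigma\Lambda=X(X^\top Y)Y^\top$, and $\rk(X_0^\top Y_0)\ls\rk(\Sigma\Lambda)$ from the identity $X_0^\top Y_0=(X_0^\top X_0)^{-1}X_0^\top\Sigma\Lambda Y_0(Y_0^\top Y_0)^{-1}$, which requires the invertibility of the Gram matrices $X_0^\top X_0$ and $Y_0^\top Y_0$. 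Your version avoids that reduction entirely, at the cost of being careful (as you note) about which factor gets transposed at each step; both are equally elementary. For part 2 the two arguments coincide: the paper applies rank--nullity to the restriction of the map represented by $\Sigma$ to the image of the map represented by $\Lambda$, while you apply it to $X^\top$ restricted to $\range(Y)$, and both are just Sylvester's inequality $\rk(AB)\gs\rk(A)+\rk(B)-n$ instantiated with ranks $k$, $l$ and $r$ (your version leaning on part 1 to identify $\rk(X^\top Y)=r$, the paper's not needing it).
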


\begin{theorem}[\chixthmnbbwgeodtitle]\label{thm:number_bw_minimizing_geodesics}
Let $\Sigma,\Lambda\in\Cov(n)$ with $\rk(\Sigma)=k$ and $\rk(\Lambda)=l$. We assume that $k\gs l$ without loss of generality. We denote $r=\rk(\Sigma\Lambda)$. We have $l-r\ls n-k$.
\begin{enumerate}
    \itemsep0em
    \item There exists a bijection between the set of minimizing geodesics from $\Sigma$ to $\Lambda$ and the closed unit ball of $\R^{(k-r)\times(l-r)}$ for the spectral norm $\bar{\mc{B}}_{\mathrm{S}}(0,1)=\{R_0\in\R^{(k-r)\times(l-r)}|\,\|R_0\|_{\mathrm{S}}\ls 1\}=\{R_0\in\R^{(k-r)\times(l-r)}|\,0\ls R_0^\top R_0\ls I_{l-r}\}$.
    \item The minimizing geodesic is unique if and only if $r=l$. This includes the cases $k=n$.
    \item There is an infinite number of minimizing geodesics if and only if $r<l$.
    \item The minimizing geodesics corresponding to the choices $R_0\in\St(k-r,l-r)$ (including the empty matrix if $r=l$) have rank exactly $k$ on $[0,1)$ (on $[0,1]$ if $l=k$). Note that $\St(k-r,l-r)$ is included in the unit sphere $\mc{S}_{\mathrm{S}}(0,1)=\{R_0\in\R^{(k-r)\times(l-r)}|\,\|R_0\|_{\mathrm{S}}=1\}$.
    \item The minimizing geodesic corresponding to the choice $R_0=0$ (or the empty matrix if $r=l$) writes for all $t\in[0,1]$:
    \small
    \begin{equation}
        \gamma^0_{\Sigma\to\Lambda}(t)=(1-t)^2\Sigma+t^2\Lambda+2t(1-t)\,\sym(\Sigma^{1/2}((\Sigma^{1/2}\Lambda\Sigma^{1/2})^{1/2})^-\Sigma^{1/2}\Lambda).
    \end{equation}
    \normalsize
    If $r=l$, it has rank exactly $k$ on $[0,1)$.
\end{enumerate}
The number of minimizing geodesic segments in $\Sym^+(n,k)$ and in $\Cov(n)$ is summarized in Table \ref{tab:minimizing_geodesics} with $n\gs k\gs l\gs r$.\\
\begin{NDLR}
See the proof of Theorem \ref{thm:number_bw_minimizing_geodesics} in Appendix \ref{subsec:proof:thm:number_bw_minimizing_geodesics}.
\end{NDLR}
\end{theorem}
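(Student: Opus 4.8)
The plan is to deduce Theorem~\ref{thm:number_bw_minimizing_geodesics} from Theorem~\ref{thm:minimizing_geodesics} by analysing the admissible parameters $R$ up to the equivalence ``produce the same geodesic.'' Fix $X,Y\in\R^{n\times n}$ with $XX^\top=\Sigma$ and $YY^\top=\Lambda$. By Theorem~\ref{thm:minimizing_geodesics} the minimizing geodesics are exactly the $\gamma_{\Sigma\to\Lambda}^R$ with $R\in\Orth(n)$ such that $H:=X^\top YR^\top\in\Cov(n)$, and this constraint forces $H=(X^\top\Lambda X)^{1/2}$, a fixed PSD matrix. First I would note that any two admissible $R,R'$ satisfy $X^\top Y(R-R')^\top=0$, so the admissible set is a torsor under $\Orth(n-r)$, the orthogonal freedom in completing the determined rank-$r$ part of $R$ to a full orthogonal matrix (here $r=\rk(X^\top Y)$ by Lemma~\ref{lemma:bures-wasserstein_elem_algebra}(1)). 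Since $\gamma^R$ depends on $R$ only through $\sym(XRY^\top)$, the real task is to compute the image of the map $R\mapsto\sym(XRY^\top)$ on this $\Orth(n-r)$-family and to count its distinct values.

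To make this explicit I would choose $X,Y$ adapted to the principal-angle (CS) decomposition of $(\mc R_\Sigma,\mc R_\Lambda)=(\range\Sigma,\range\Lambda)$. The subspaces $\mc R_\Sigma\cap\mc R_\Lambda^\perp$ and $\mc R_\Lambda\cap\mc R_\Sigma^\perp$ have dimensions $k-r$ and $l-r$ (each equal to a number of vanishing principal cosines, using $r=\rk(U_\Sigma^\top U_\Lambda)$), and they are mutually orthogonal since the first lies in $\mc R_\Sigma$ and the second in $\mc R_\Sigma^\perp$. In these coordinates the determined part of $R$ contributes a fixed summand to $\sym(XRY^\top)$, while the free $\Orth(n-r)$ part enters only through its top-left $(k-r)\times(l-r)$ block $R_0$, which couples $\mc R_\Sigma\cap\mc R_\Lambda^\perp$ with $\mc R_\Lambda\cap\mc R_\Sigma^\perp$ as an off-diagonal block of the symmetric matrix $\gamma^R(t)-(1-t)^2\Sigma-t^2\Lambda=2t(1-t)\sym(XRY^\top)$. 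Because these two subspaces are orthogonal, symmetrisation is injective in $R_0$, so distinct $R_0$ give distinct geodesics; conversely the classical fact that the top-left $(k-r)\times(l-r)$ block of an orthogonal matrix of size $n-r$ ranges over exactly the spectral-norm unit ball ---whose completability requires $n-r-(k-r)\ge l-r$, i.e. the inequality $l-r\le n-k$ of Lemma~\ref{lemma:bures-wasserstein_elem_algebra}(2)--- shows $R_0$ sweeps all of $\bar{\mc B}_{\mathrm S}(0,1)$. This yields the bijection of item~1.

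Items~2 and~3 then reduce to a triviality check on $\bar{\mc B}_{\mathrm S}(0,1)\subset\R^{(k-r)\times(l-r)}$: it is a single point iff $(k-r)(l-r)=0$, and since $n\ge k\ge l\ge r$ this occurs exactly when $l=r$ (note $k=r$ forces $l=r$), otherwise the ball is positive-dimensional, hence infinite; the case $k=n$ is covered because then $\Sigma$ is invertible and $r=\rk(\Sigma\Lambda)=\rk(\Lambda)=l$. For items~4 and~5 I would read off the rank of $\gamma_{\Sigma\to\Lambda}^R(t)$ from the explicit block form: a Schur-complement computation on the $R_0$-coupled block (the scalar factors being $(1-t)^2$, $t^2$, $t(1-t)$) shows that the rank on $(0,1)$ drops to the minimum $\max(k,l)=k$ granted by Lemma~\ref{lemma:rank} precisely when the coupling is isometric, $R_0^\top R_0=I_{l-r}$, i.e. $R_0\in\St(k-r,l-r)$, whereas for a strict contraction $I_{l-r}-R_0^\top R_0$ is invertible and the rank is strictly larger. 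Finally, for item~5 I would substitute $R_0=0$, simplify $\sym(XRY^\top)$ to $\sym(\Sigma^{1/2}((\Sigma^{1/2}\Lambda\Sigma^{1/2})^{1/2})^-\Sigma^{1/2}\Lambda)$ (matching the $r=k$ formula of Theorem~\ref{thm:bw_geodesics_sym^+(n,k)}), and read off rank $k$ from the same block computation when $l=r$, where $R_0$ is the empty matrix.

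The hardest step will be the CS-coordinate bookkeeping of the second paragraph: setting up $X,Y$ so that the admissible-$R$ freedom is visibly confined to the $(k-r)\times(l-r)$ coupling block, and simultaneously verifying that the symmetrised map is injective and that the block genuinely ranges over the \emph{entire} spectral ball (this is exactly where Lemma~\ref{lemma:bures-wasserstein_elem_algebra}(2) is indispensable, since it guarantees the orthogonal completion of every contraction exists). The rank computations of items~4 and~5 form a secondary obstacle, as they require a careful Schur-complement analysis to pin the constant rank at its minimal value $k$, beyond the qualitative constancy already supplied by Lemma~\ref{lemma:rank} and Theorem~\ref{thm:minimizing_geodesics}.
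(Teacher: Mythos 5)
Your proposal is correct and follows essentially the same route as the paper's proof: reduce to the admissible orthogonal parameters $R$ via Theorem \ref{thm:minimizing_geodesics}, pass to square roots $X=[X_r~X_{k-r}~0]$, $Y=[Y_r~Y_{l-r}~0]$ adapted to the singular value decomposition of $X_0^\top Y_0$ (your CS-decomposition of the ranges), isolate the free $(k-r)\times(l-r)$ block $R_0$, and get injectivity from the orthogonality of $\range(X_{k-r})$ and $\range(Y_{l-r})$ together with surjectivity onto the spectral ball from the orthogonal completion permitted by $l-r\ls n-k$. The only divergences are cosmetic: for item 4 the paper simply observes that the lift $(1-t)X+tYR^\top$ has $n-k$ identically null columns when $R_0\in\St(k-r,l-r)$ (so that $R_1$ can be taken null) rather than running a Schur-complement computation, and the converse you assert there (strict contractions give rank strictly larger than $k$) is not required by the statement.
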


\begin{table}[h]
    \centering
    \begin{tabular}{|c|c|c|c|c|}
        \hline
        \multirow{2}{*}{$\Sigma\in$} & \multirow{2}{*}{$\Lambda\in$} & \multirow{2}{*}{$r=\rk(\Sigma\Lambda)$} & \multicolumn{2}{c|}{Number of minimizing geodesics}\\
        \cline{4-5}
         &  &  & $~\mathrm{in~}\Sym^+(n,k)~$ & $\mathrm{in~}\Cov(n)$\\
        \hline
        $\Sym^+(n)$ & $\Sym^+(n)$ & $n$ & $1$ & $1$\\
        \hline
        $\Sym^+(n)$ & $\Sym^+(n,k)$ & $k$ & $1$ & $1$\\
        \hline
        \multirow{3}{*}{$\Sym^+(n,k)$} & \multirow{3}{*}{$\Sym^+(n,k)$} & $k$ & $1$ & $1$\\
        \cline{3-5}
        & & $k-1$ & $2$ & $\infty$\\
        \cline{3-5}
        & & $<k-1$ & $\infty$ & $\infty$\\
        \hline
        \multirow{2}{*}{$\Sym^+(n,k)$} & \multirow{2}{*}{$\Sym^+(n,l)$} & $l$ & $1$ & $1$\\
        \cline{3-5}
        & & $<l$ & $\infty$ & $\infty$\\
        \hline
    \end{tabular}
    \caption{Number of Bures-Wasserstein minimizing geodesics ($n\gs k\gs l\gs r$).}
    \label{tab:minimizing_geodesics}
\end{table}

\section{Conclusion}\label{sec:bures-wasserstein_conclusion}

We have answered several open questions on geodesics of the Bures-Wasserstein distance on covariance matrices. Beyond geodesics, a very important element of Riemannian geometry is the curvature. We know that the space of covariance matrices with the Bures-Wasserstein distance is an Aleksandrov space of non-negative curvature \cite{Takatsu11} and we know the curvature tensor in each stratum \cite{Takatsu10,Takatsu11,Massart19}. However, we lack a comprehensive and global approach of the curvature of the whole metric space. In particular, what is the appropriate notion of curvature to use to go from a stratum to another?

In the community of geometric statistics, most of the stratified spaces that were studied from the viewpoint of geodesics or curvature are very singular (spiders, trees) or a bit complex to start with (BHV space, Wald space) \cite{Feragen20}. Thus, the familiar example of the Bures-Wasserstein Riemannian orbit space appears to be a good basis to generalize concepts defined in Riemannian statistics. Indeed, after studying the geometry of these non-Riemannian spaces, what statistical tools should we define on them to generalize the Euclidean and Riemannian ones? This is probably the main question to investigate for the future.

\section*{Acknowledgements}

This project has received funding from the European Research Council (ERC) under the European Union’s Horizon 2020 research and innovation program (grant G-Statistics agreement No 786854). This work has been supported by the French government, through the UCAJEDI Investments in the Future project managed by the National Research Agency (ANR) with the reference number ANR-15-IDEX-01 and through the 3IA Côte d’Azur Investments in the Future project managed by the National Research Agency (ANR) with the reference number ANR-19-P3IA-0002. The authors warmly thank Anna Calissano for insightful discussions about the Bures-Wasserstein distance and orbit spaces.

\begin{appendices}

\section{Appendix}

In this Appendix, we prove the main results of the paper. For readability, we recall the result before stating the proof.

\subsection{Lemma \ref{lemma:bures-wasserstein_topologies}}\label{subsec:proof:lemma:bures-wasserstein_topologies}

(\chixlemmaeucbwtoptitle) \chixlemmaeucbwtop

\begin{proof}[Proof of Lemma \ref{lemma:bures-wasserstein_topologies}]
The map $\pi:X\in\Mat(n)\lmto XX^\top\in(\Cov(n),d^{\mathrm{E}})$ is continuous so the quotient topology, i.e. the topology induced by the Bures-Wasserstein distance, is finer than the Euclidean topology. Conversely, let $\mc{U}$ be an open set for the Bures-Wasserstein distance. Let $\Sigma\in\mc{U}$. Let $\varepsilon>0$ such that the Bures-Wasserstein ball $\mc{B}^{\mathrm{BW}}(\Sigma,\varepsilon)$ is included in $\mc{U}$. The set $\mc{V}=\pow_2(\mc{B}^{\mathrm{E}}(\Sigma^{1/2},\varepsilon))$ is open for the Euclidean distance because the map $\pow_2:\Sigma\lmto\Sigma^2$ is a homeomorphism of $(\Cov(n),d^{\mathrm{E}})$. Moreover, if $\Lambda\in\mc{V}$, then $d^{\mathrm{BW}}(\Sigma,\Lambda)\ls d^{\mathrm{E}}(\Sigma^{1/2},\Lambda^{1/2})\ls\varepsilon$ so $\Lambda\in\mc{B}^{\mathrm{BW}}(\Sigma,\varepsilon)\subseteq\mc{U}$. So $\mc{V}\subseteq\mc{U}$ is a Euclidean neighborhood of $\Sigma$, so $\mc{U}$ is open for the Euclidean distance. Therefore the two topologies coincide.
\end{proof}

\subsection{Theorem \ref{thm:bw_geodesics_sym^+(n)}}\label{subsec:proof:thm:bw_geodesics_sym^+(n)}

(\chixthmbwgeodsymntitle)\\
Let $\Sigma\in\Sym^+(n)$.
\begin{enumerate}
    \item (Exponential map) \cite{Malago18} For all $V\in T_\Sigma\Sym^+(n)\equiv\Sym(n)$, the geodesic from $\Sigma$ with initial speed $V$ writes $\gamma_{(\Sigma,V)}(t)=\Sigma+tV+t^2\mc{S}_\Sigma(V)\Sigma\mc{S}_\Sigma(V)\in\Sym^+(n)$.
    
    \item (Definition interval) Let $\lambda_\maxi=\max\sp(\mc{S}_\Sigma(V))$ and $\lambda_\mini=\min\sp(\mc{S}_\Sigma(V))$. The definition interval of the geodesic $\gamma_{(\Sigma,V)}$ is the interval $I_{\Sigma,V}$ defined by:
    \begin{enumerate}[label=$\cdot$]
        \item $I_{\Sigma,V}=(-\frac{1}{\lambda_\maxi},-\frac{1}{\lambda_\mini})$ if $\lambda_\mini<0<\lambda_\maxi$,
        \item $I_{\Sigma,V}=(-\infty,-\frac{1}{\lambda_\mini})$ if $\lambda_\mini<0$ and $\lambda_\maxi\ls 0$,
        \item $I_{\Sigma,V}=(-\frac{1}{\lambda_\maxi},+\infty)$ if $\lambda_\mini\gs 0$ and $\lambda_\maxi>0$,
        \item $I_{\Sigma,V}=\R$ if $\lambda_\mini=\lambda_\maxi=0$ (which only happens for $V=0$).
    \end{enumerate}
    
    \item (Cut time) The cut time is $t_{cut}(\Sigma,V)=-\frac{1}{\lambda_\mini}$ if $\lambda_\mini<0$ or $+\infty$ otherwise. The geodesic $\gamma_{(\Sigma,V)}:I_{\Sigma,V}\lto\mc{M}$ is even minimizing on $I_{\Sigma,V}$.
    
    \item (Logarithm map) For all $\Lambda\in\Sym^+(n)$, there exists a unique preimage $V\in\mc{P}re_\Sigma(\Lambda)$. It writes $V=2\,\sym(\Sigma^{1/2}(\Sigma^{1/2}\Lambda\Sigma^{1/2})^{1/2}\Sigma^{-1/2})-2\Sigma$, where we denote $\sym(M)=\frac{1}{2}(M+M^\top)$. The geodesic joining $\Sigma$ to $\Lambda$ writes:
    \begin{equation*}
        \gamma_{\Sigma\to\Lambda}(t)=(1-t)^2\Sigma+t^2\Lambda+2t(1-t)\,\sym(\Sigma^{1/2}(\Sigma^{1/2}\Lambda\Sigma^{1/2})^{1/2}\Sigma^{-1/2}).
    \end{equation*}
    
    Moreover, it is a logarithm: $V\in\mc{L}og_x(y)$. Thus the logarithm map is defined on $\mc{U}_\Sigma=\Sym^+(n)$ and it writes:
    \begin{equation*}
        \Log_\Sigma:\fun{\Sym^+(n)}{T_\Sigma\Sym^+(n)}{\Lambda}{2\sym(\Sigma^{1/2}(\Sigma^{1/2}\Lambda\Sigma^{1/2})^{1/2}\Sigma^{-1/2})-2\Sigma}.
    \end{equation*}
\end{enumerate}

\begin{proof}[Proof of Theorem \ref{thm:bw_geodesics_sym^+(n)}]
We prove statement 3 in the end because it requires statement 4.
\begin{enumerate}
    \item (Exponential map) The expression of the exponential map comes from \cite{Malago18}.
    
    \item (Definition domain) The domain $I_{\Sigma,V}$ is described in \cite{Malago18} as the connected component of $0$ in $J_{\Sigma,V}=\{t\in\R|I_n+t\mc{S}_\Sigma(V)\in\Sym^+(n)\}$. Since $t\in J_{\Sigma,V}$ if and only if $0\notin\{1+t\lambda|\lambda\in\sp(\mc{S}_\Sigma(V))\}$ if and only if $t\notin\{-\frac{1}{\lambda}|\lambda\in\sp(\mc{S}_\Sigma(V))\}$, we have $\max(-\infty,0]\cap\{-\frac{1}{\lambda}|\lambda\in\sp(\mc{S}_\Sigma(V))\}=-\frac{1}{\lambda_\maxi}$ if $\lambda_\maxi>0$ and $\min[0,+\infty)\cap\{-\frac{1}{\lambda}|\lambda\in\sp(\mc{S}_\Sigma(V))\}=-\frac{1}{\lambda_\mini}$ if $\lambda_\mini<0$. Therefore, we have the following cases:
    \begin{enumerate}[label=$\cdot$]
        \item if $\lambda_\mini<0<\lambda_\maxi$, then $I_{\Sigma,V}=(-\frac{1}{\lambda_\maxi},-\frac{1}{\lambda_\mini})$,
        \item if $\lambda_\mini<0$ and $\lambda_\maxi\ls0$, then $(-\infty,0]\subseteq J_{\Sigma,V}$ so $I_{\Sigma,V}=(-\infty,-\frac{1}{\lambda_\mini})$,
        \item if $\lambda_\mini\gs0$ and $\lambda_\maxi>0$, then $[0,+\infty)\subseteq J_{\Sigma,V}$ so $I_{\Sigma,V}=(-\frac{1}{\lambda_\maxi},+\infty)$,
        \item if $\lambda_\mini\gs 0$ and $\lambda_\maxi\ls 0$, which means $\lambda_\mini=\lambda_\maxi=0$, then $V=0$ and $I_{\Sigma,V}=J_{\Sigma,V}=\R$.
    \end{enumerate}

    \item[4.] (Logarithm map) The existence of a preimage $V\in\mc{P}re_\Sigma(V)$ and even a logarithm $V\in\mc{L}og_\Sigma(V)$ (because it satisfies $\|V\|=d(\Sigma,\Lambda)$) is due to \cite{Bhatia19}. The geodesic joining $\Sigma$ to $\Lambda$ (Equation \ref{eq:geodesic_full-rank}) is derived in \cite{Bhatia19} and it suffices to derive the expression at $t=0$ to compute $V=\dot\gamma_{\Sigma\to\Lambda}(0)=2\,\sym(\Sigma^{1/2}(\Sigma^{1/2}\Lambda\Sigma^{1/2})^{1/2}\Sigma^{-1/2})-2\Sigma$. 

    The uniqueness of the preimage comes from \cite[Proposition 4.4]{Massart20}. Indeed, it is stated that there exists a unique $W\in\mc{H}_{\Sigma^{1/2}}$ such that:
    \begin{enumerate}
        \item for all $t\in[0,1]$, $\Sigma^{1/2}(\Sigma^{1/2}+tW)\in\GL(n)$,
        \item $(\Sigma^{1/2}+W)(\Sigma^{1/2}+W)^\top=\Lambda$.
    \end{enumerate}
    Therefore, there exists a unique $V=d_{\Sigma^{1/2}}\pi(W)=\Sigma^{1/2}W^\top+W\Sigma^{1/2}\in T_\Sigma\Sym^+(n)$ (and $W=V^\#_{\Sigma^{1/2}}$) such that $1\in I_{\Sigma,V}$ (that is, for all $t\in[0,1]$, $\Exp_\Sigma(tV)\in\Sym^+(n)$) and $\Exp_\Sigma(V)=\Lambda$, i.e. $V\in\mc{P}re_\Sigma(\Lambda)$. Thus the logarithm map $\Log_\Sigma$ is defined on $\mc{U}_\Sigma=\Sym^+(n)$.

    \item[3.] (Cut time) Let us prove that $\gamma_{(\Sigma,V)}$ is minimizing on $I_{\Sigma,V}$. This will prove in particular that $t_{cut}(\Sigma,V)=\sup I_{\Sigma,V}$. Let $t,t'\in I_{\Sigma,V}$, $t<0<t'$, let $\Lambda=\Exp_\Sigma(tV)$ and $\Lambda'=\Exp_\Sigma(t'V)$. Changing the base point of the geodesic, we have $\Lambda'=\Exp_\Lambda((t'-t)V')$ with $V'=-\dot\gamma_{(\Sigma,V)}(t)\in T_\Lambda\Sym^+(n)$. Since for all $s\in[0,1]$, $(1-s)t+st'\in I_{\Sigma,V}$ and $\Exp_\Lambda(s(t'-t)V')=\Exp_\Sigma(((1-s)t+st')V)\in\Sym^+(n)$, we have $1\in I_{\Lambda,(t'-t)V'}$ so $(t'-t)V'\in\mc{P}re_\Lambda(\Lambda')$. By uniqueness of the preimage of $\Lambda'$ from $\Lambda$, $\Log_\Lambda(\Lambda')=(t'-t)V'$ and $\gamma_{\Lambda,(t'-t)V'}$ is minimizing on $[0,1]$. Equivalently, $\gamma_{(\Sigma,V)}$ is minimizing on $[t,t']$ so it is minimizing on $I_{\Sigma,V}$.
\end{enumerate}
\end{proof}

\subsection{Theorem \ref{thm:horlift_metric}}\label{subsec:proof:thm:horlift_metric}

(\chixthmhorliftmettitle)\\
\chixthmhorliftmet

\begin{proof}[Proof of Theorem \ref{thm:horlift_metric}]
\begin{enumerate}
    \item[1\&2.] We prove the expression of the tangent space and the horizontal lift together. Let $V\in T_\Sigma\Sym^+(n,k)$. The horizontal lift is defined by:
    \begin{enumerate}[label=$\cdot$]
        \itemsep0em
        \item (lift) $V=d_X\pi(V^\#)=X(V^\#)^\top+V^\# X^\top$,
        \item (horizontal) $V^\#=X(X^\top X)^{-1}F+X_\perp K$ where $F\in\Sym(r)$ and $K\in\Mat(n-r,r)$.
    \end{enumerate}
    When we plug the second equality in the first one and we multiply by $X^\top$ on the left and $X$ on the right, since $X^\top X_\perp=0$, we get immediately $X^\top VX=X^\top XF+FX^\top X$ so $F=\mc{S}_{X^\top X}(X^\top VX)$. By multiplying by $X_\perp^\top$ on the left instead, we get $X_\perp^\top VX=KX^\top X$ so $K=X_\perp^\top VX(X^\top X)^{-1}$. Since the matrix $(X(X^\top X)^{-1/2}\,;\,X_\perp)$ is orthogonal, we have $X(X^\top X)^{-1}X^\top+X_\perp X_\perp^\top=I_n$.
    
    We compute $d_X\pi(V^\#)$ to check that it is equal to $V$:
    \begin{align*}
        d_X\pi(V^\#)&=X[(X^\top X)^{-1}F+F(X^\top X)^{-1}]X^\top\\
        &\quad+2\,\sym(X_\perp X_\perp^\top VX(X^\top X)^{-1}X^\top)\\
        &=X(X^\top X)^{-1}X^\top VX(X^\top X)^{-1}X^\top\\
        &\quad+2\,\sym(X_\perp X_\perp^\top V(I_n-X_\perp X_\perp^\top))\\
        &=(I_n-X_\perp X_\perp^\top)V(I_n-X_\perp X_\perp^\top)\\
        &\quad+X_\perp X_\perp^\top V+VX_\perp X_\perp^\top-2X_\perp X_\perp^\top VX_\perp X_\perp^\top\\
        &=V-X_\perp X_\perp^\top VX_\perp X_\perp^\top.
    \end{align*}
    Thus, $X_\perp X_\perp^\top VX_\perp X_\perp^\top=0$ so $X_\perp^\top VX_\perp=0$. Conversely, if $X_\perp^\top VX_\perp=0$, then $V$ is the image by $d_X\pi$ of a horizontal vector so $V\in T_\Sigma\Sym^+(n,k)$. Hence $T_\Sigma\Sym^+(n,k)=\{V\in\Sym(n)|X_\perp^\top VX_\perp=0\}$.
    
    \item[3.] The quotient metric is defined by $g^{\mathrm{BW}(n,k)}_\Sigma(V,V)=\tr(V^\#_X(V^\#_X)^\top)$ so we only need to compute $V^\#(V^\#)^\top$ for any $X$, for example $X=UD^{1/2}$, and its trace.
    \begin{align*}
        V^\#&=UD^{1/2}D^{-1}\mc{S}_D(D^{1/2}U^\top VUD^{1/2})\\
        &\quad+(I_n-UD^{1/2}D^{-1}D^{1/2}U^\top)VUD^{1/2}D^{-1}\\
        &=U\mc{S}_D(U^\top VU)D^{1/2}+(I_n-UU^\top)VUD^{-1/2}\\
        &=SUD^{1/2}+(I_n-UU^\top)VUD^{-1/2},\\
        V^\#(V^\#)^\top&=SUDU^\top S+SV(I_n-UU^\top)+(I_n-UU^\top)VS\\
        &\quad+(I_n-UU^\top)V\Sigma^- V(I_n-UU^\top).
    \end{align*}
    Hence $\tr(V^\#(V^\#)^\top)=\tr(S\Sigma S+V\Sigma^- V(I_n-UU^\top))$.
\end{enumerate}
\end{proof}

\subsection{Theorem \ref{thm:bw_geodesics_sym^+(n,k)}}\label{subsec:proof:thm:bw_geodesics_sym^+(n,k)}

(\chixthmbwgeodsymnktitle) Let $\Sigma,\Lambda\in\Sym^+(n,k)$ and $X,Y\in\R^{n\times k}_*$ such that $XX^\top=\Sigma$ and $YY^\top=\Lambda$. Let $U\in\St(n,k)$ and $D\in\Diag^+(k)$ such that $\Sigma=UDU^\top$.
\begin{enumerate}
    \item (Exponential map) For all $V\in T_\Sigma\Sym^+(n,k)$, the geodesic from $\Sigma$ with initial speed $V$ is $\gamma_{(\Sigma,V)}:t\in I_{\Sigma,V}\lmto\Sigma+tV+t^2W_{\Sigma,V}$, where $W_{\Sigma,V}=S_{\Sigma,V}\Sigma S_{\Sigma,V}+S_{\Sigma,V}V(I_n-UU^\top)+(I_n-UU^\top)VS_{\Sigma,V}+(I_n-UU^\top)V\Sigma^+ V(I_n-UU^\top)$ and $S_{\Sigma,V}=U\mc{S}_D(U^\top VU)U^\top$.
    
    \item (Definition interval) Let $F^0_{X,V}=\mc{S}_{X^\top X}((X^\top X)^{-1/2}X^\top VX(X^\top X)^{-1/2})$ and $M^0_{X,V}=(X^\top X)^{-3/2}X^\top V(I_n-X(X^\top X)^{-1}X^\top)VX(X^\top X)^{-3/2}\in\Sym(n)$. Let $\mc{E}_{\Sigma,V}=\{\lambda\in\sp(F^0_{X,V})|\,\ker(\lambda I_k-F^0_{X,V})\cap\ker(M^0_{X,V})\ne\{0\}\}\subseteq\sp(S_{\Sigma,V})$. If $\mc{E}_{\Sigma,V}$ is non-empty, then let $\lambda_+=\max\mc{E}_{\Sigma,V}$ and $\lambda_-=\min\mc{E}_{\Sigma,V}$. The definition interval of the geodesic $\gamma_{(\Sigma,V)}$ is the interval $I_{\Sigma,V}$ defined by:
    \begin{enumerate}[label=$\cdot$]
        \item $I_{\Sigma,V}=(-\frac{1}{\lambda_+},-\frac{1}{\lambda_-})$ if $\lambda_-<0<\lambda_+$,
        \item $I_{\Sigma,V}=(-\infty,-\frac{1}{\lambda_-})$ if $\lambda_-<0$ and $\lambda_+\ls 0$,
        \item $I_{\Sigma,V}=(-\frac{1}{\lambda_+},+\infty)$ if $\lambda_-\gs 0$ and $\lambda_+>0$,
        \item $I_{\Sigma,V}=\R$ if $\mc{E}_{\Sigma,V}$ is empty.
    \end{enumerate}
    
    Applying this to $X=UD^{1/2}$ without loss of generality, $F^0_{X,V}=\mc{S}_D(U^\top VU)$ and $M^0_{X,V}=D^{-1}U^\top V(I_n-UU^\top)VUD^{-1}$ which is a bit more tractable to compute $\mc{E}_{\Sigma,V}$.
    
    \item (Cut time) Let $\lambda_\maxi=\max\sp(F^0_{X,V})$ and $\lambda_\mini=\min\sp(F^0_{X,V})$. Note that if $\mc{E}_{\Sigma,V}\ne\emptyset$, then we have $(\lambda_-,\lambda_+)\subseteq(\lambda_\mini,\lambda_\maxi)$. The cut time is $t_{cut}(\Sigma,V)=-\frac{1}{\lambda_\mini}$ if $\lambda_\mini<0$ or $+\infty$ otherwise. Symmetrically, we have $t_{cut}(\Sigma,-V)=\frac{1}{\lambda_\maxi}$ if $\lambda_\maxi>0$ or $+\infty$ otherwise.
    
    \item (Preimages) We define the indexing set $\mc{I}^{\mc{P}re}_{X,Y}$ by:
    \begin{align*}
        \mc{I}^{\mc{P}re}_{X,Y}=\{R\in\Orth(n)|&H:=X^\top YR^\top\in\Sym(n)\mathrm{~and~}\\
        &\forall\mu<0,\ker(\mu I_k-(X^\top X)^{-1/2}H(X^\top X)^{-1/2})\\
        &\cap\ker(\mu^2 I_k-(X^\top X)^{-1/2}RY^\top YR^\top (X^\top X)^{-1/2})=\{0\}\}.
    \end{align*}

    For $R\in\mc{I}^{\mc{P}re}_{X,Y}$, we denote $H=H_{X,Y,R}=X^\top YR^\top$ so that $X^\top Y=HR$. Then the map $R\in\mc{I}^{\mc{P}re}_{X,Y}\lmto V=2\,\sym(XRY^\top)-2\Sigma\in\mc{P}re_\Sigma(\Lambda)$ is a bijection whose inverse is $V\in\mc{P}re_\Sigma(\Lambda)\lmto R=(Y^\top Y)^{-1}Y^\top(X+V^\#_X)\in\mc{I}^{\mc{P}re}_{X,Y}$.
    
    The geodesic joining $\Sigma$ to $\Lambda$ parametrized by $R\in\mc{I}^{\mc{P}re}_{X,Y}$ writes:
    \begin{equation*}
        \forall t\in[0,1],\gamma_{\Sigma\to\Lambda}^R(t)=(1-t)^2\Sigma+t^2\Lambda+2t(1-t)\sym(XRY^\top).
    \end{equation*}
    
    \item (Logarithms) Let $\mc{I}^{\mc{L}og}_{X,Y}=\{R\in\Orth(n)|H_{X,Y,R}=X^\top YR^\top\in\Cov(n)\}=\{R\in\Orth(n)|H_{X,Y,R}=(X^\top\Lambda X)^{1/2}\}=\{R\in\Orth(n)|\,X^\top Y=(X^\top\Lambda X)^{1/2}R\}\subseteq\mc{I}^{\mc{P}re}_{X,Y}$.
    
    Then, the map $R\in\mc{I}^{\mc{L}og}_{X,Y}\lmto V=2\,\sym(XRY^\top)-2\Sigma\in\mc{L}og_\Sigma(\Lambda)$ is a bijection whose inverse is $V\in\mc{L}og_\Sigma(\Lambda)\lmto R=(Y^\top Y)^{-1}Y^\top(X+V^\#_X)\in\mc{I}^{\mc{L}og}_{X,Y}$.
    
    \item (Logarithm map) Let $r=\rk(\Sigma\Lambda)=\rk(X^\top Y)=\rk(H)$.
    \begin{enumerate}
        \item If $r=k$, then there exists a unique logarithm of $\Lambda$ from $\Sigma$. In this case, the minimizing geodesic joining $\Sigma$ to $\Lambda$ writes:
        \small
        \begin{equation}
            \gamma_{\Sigma\to\Lambda}(t)=(1-t)^2\Sigma+t^2\Lambda+2t(1-t)\sym(\Sigma^{1/2}((\Sigma^{1/2}\Lambda\Sigma^{1/2})^{1/2})^-\Sigma^{1/2}\Lambda).
        \end{equation}
        \normalsize
        \item If $r=k-1$, then there exist exactly two logarithms of $\Lambda$ from $\Sigma$.
        
        \item If $r<k-1$, then there is an infinity of logarithms of $\Lambda$ from $\Sigma$.
    \end{enumerate}
    Therefore, the logarithm map is defined on $\mc{U}_\Sigma=\{\Lambda\in\Sym^+(n,k)|\rk(\Sigma\Lambda)=k\}$ and it writes $\Log_\Sigma:\Lambda\in\mc{U}_\Sigma\lmto 2\,\sym(\Sigma^{1/2}((\Sigma^{1/2}\Lambda\Sigma^{1/2})^{1/2})^-\Sigma^{1/2}\Lambda)-2\Sigma\in T_\Sigma\Sym^+(n,k)$.
\end{enumerate}

\begin{proof}[Proof of Theorem \ref{thm:bw_geodesics_sym^+(n,k)}]
We prove statement 3 in the end because it requires statements 4 and 5.
\begin{enumerate}
    \item (Exponential map) By Theorem \ref{thm:geodesics_quotient} \cite{ONeill66}, the exponential map is simply $\Exp_\Sigma(tV)=\pi(\Exp_X(tV^\#_X))=(X+tV^\#_X)(X+tV^\#_X)^\top=\Sigma+tV+V^\#_X(V^\#_X)^\top$. The matrix $W=V^\#_X(V^\#_X)^\top$ was already computed in the proof of Theorem \ref{thm:horlift_metric}.
    
    \item (Definition domain) As in $\Sym^+(n)$, let us first determine $J_{\Sigma,V}=\{t\in\R|\rk(\Sigma+tV+t^2W_{\Sigma,V})=k\}$. According to \cite[Proposition 3.2]{Massart20} applied to $tV^\#_X=t[X(X^\top X)^{-1}F_{X,V}+X_\perp K_{X,V}]$ with $F=F_{X,V}=\mc{S}_{X^\top X}(X^\top VX)$ and $K=K_{X,V}=X_\perp^\top VX(X^\top X)^{-1}$, we have $t\in J_{\Sigma,V}$ if and only if $\ker(I_k+t(X^\top X)^{-1}F)\cap\ker(K)=\{0\}$. Let $\mc{E}_{\Sigma,V}=\{\lambda\in\sp((X^\top X)^{-1}F)|\,\ker(\lambda I_k-(X^\top X)^{-1}F)\cap\ker(K)\ne\{0\}\}$. It is clear that $J_{\Sigma,V}=\R\backslash{\{-\frac{1}{\lambda}|\lambda\in\mc{E}_{\Sigma,V}\}}$. Then $I_{\Sigma,V}$ is the connected component of $0$ in $J_{\Sigma,V}$. Its computation is analogous to the one in the proof of Theorem \ref{thm:bw_geodesics_sym^+(n)}.
    
    To get a condition in $\mc{E}_{\Sigma,V}$ that directly depends on $V$, note that the condition rewrites $\ker(\lambda I_k-(X^\top X)^{-1/2}F(X^\top X)^{-1/2})\cap\ker(K(X^\top X)^{-1/2})\ne\{0\}$ and $\ker(K(X^\top X)^{-1/2})=\ker((X^\top X)^{-1/2}K^\top K(X^\top X)^{-1/2})$ with $K^\top K=(X^\top X)^{-1}X^\top V(I_n-X(X^\top X)^{-1}X^\top)VX(X^\top X)^{-1}$. Note that $\mc{E}_{\Sigma,V}$ is independent from the choice of $X$ because $F_{XR,V}=R^\top F_{X,V}R$ and $K_{XR,V}=R^\top K_{X,V}R$ for all $R\in\Orth(n)$ so the condition does not depend on $X$.

    \item[4.] (Preimages) In \cite[Propositions 4.4 \& 4.5]{Massart20}, the solutions of the equation $\Exp_\Sigma(V)=\Lambda$ are not computed in the definition domain $\mc{D}_\Sigma$ of the exponential map but in the wider set $\{V\in T_\Sigma\Sym^+(n,k)|\Sigma+V+W_{\Sigma,V}\in\Sym^+(n,k)\}$. Thus, the geodesic $\gamma_{(\Sigma,V)}$ may leave the manifold $\Sym^+(n,k)$ before reaching $\Lambda$. Therefore, we complete their work with the additional condition $1\in I_{\Sigma,V}$ to characterize the preimages of $\Lambda$ from $\Sigma$.
    
    From \cite{Massart20}, we know that preimages $V$ necessarily satisfy $V^\#_X=YR^\top-X$ with $X^\top Y=HR$, $H\in\Sym(n)$, $R\in\Orth(n)$. Thus, $V=XRY^\top+YR^\top X^\top-2XX^\top$ so $X^\top VX=HX^\top X+X^\top XH-2(X^\top X)^2$ so $F=\mc{S}_{X^\top X}(X^\top VX)=H-X^\top X$. Moreover, $K=X_\perp^\top VX(X^\top X)^{-1}=X_\perp^\top YR^\top$. Denoting $A=(X^\top X)^{-1/2}H(X^\top X)^{-1/2}$ and $B=(X^\top X)^{-1/2}RY^\top YR^\top(X^\top X)^{-1/2}$, we have $(X^\top X)^{-1/2}F(X^\top X)^{-1/2}=A-I_k$ and $(X^\top X)^{-1/2}K^\top K(X^\top X)^{-1/2}=(X^\top X)^{-1/2}RY^\top X_\perp X_\perp^\top YR^\top(X^\top X)^{-1/2}=B-A^2$.
    
    We can now compute $\mc{E}_{\Sigma,V}$ and $I_{\Sigma,V}$. For all $\lambda\in\R$, for all $Z\in\R^n$:
    \begin{align*}
        &Z\in\ker(\lambda I_k-S^0_{X,V})\cap\ker(M^0_{X,V})\\
        &\cns Z\in\ker((\lambda+1)I_k-A)\cap\ker(B-A^2)\\
        &\cns AZ=(\lambda+1)Z\mathrm{~and~}BZ=A^2Z\\
        &\cns AZ=(\lambda+1)Z\mathrm{~and~}BZ=(\lambda+1)^2Z\\
        &\cns Z\in\ker((\lambda+1)I_k-A)\cap\ker((\lambda+1)^2I_k-B).
    \end{align*}
    Therefore:
    \begin{align*}
        \mc{E}_{\Sigma,V}&=\{\lambda\in\sp(A-I_k)|\ker((\lambda+1)I_k-A)\cap\ker((\lambda+1)^2I_k-B)\ne\{0\}\}\\
        &=\{\mu-1\in\sp(A-I_k)|\ker(\mu I_k-A)\cap\ker(\mu^2I_k-B)\ne\{0\}\}.
    \end{align*}
    Thus, denoting $\lambda_-=\min\mc{E}_{\Sigma,V},$ the condition $1\in I_{\Sigma,V}$ rewrites:
    \begin{align*}
        1\in I_{\Sigma,V}&\cns\lambda_-\gs 0\mathrm{~or~}-\frac{1}{\lambda_-}\gs 1\cns\lambda_-\gs-1\\
        &\cns\forall\mu\in\sp(A),\ker(\mu I_k-A)\cap\ker(\mu^2I_k-B)\ne\{0\}\nec\mu\gs0\\
        &\cns\forall\mu<0,\ker(\mu I_k-A)\cap\ker(\mu^2I_k-B)=\{0\}.
    \end{align*}
    
    To conclude, with the notations of statement 4, $V=2\,\sym(XRY^\top)-2\Sigma\in\mc{P}re_\Sigma(\Lambda)$ if and only if $R\in\mc{I}^{\mc{P}re}_{X,Y}$.
    
    \item[5.] (Logarithms) In \cite{Massart20}, it is stated that the shortest vectors $V=d\pi(V^\#_X)$ with $V^\#_X=YR^\top-X$ are those for which $(H,R)$ is a polar decomposition of $X^\top Y$, i.e. $H\gs 0$. Therefore, we necessarily have $H=(X^\top\Lambda X)^{1/2}$. It is well known that they even satisfy $\|V\|=\|V^\#\|=d^{\mathrm{BW}}(\Sigma,\Lambda)$ (see Definition \ref{def:BW_distance}). 
    Moreover, if $H\gs 0$, the condition $1\in I_{\Sigma,V}$ is automatically satisfied, as stated in \cite[Corollary 3.3 (5)]{Massart20}. So with the notations of statement 5, the logarithms are indexed by $\mc{I}^{\mc{L}og}_{X,Y}$.
    
    \item[6.] (Logarithm map) We have $\rk(\Sigma\Lambda)\ls\rk(X^\top Y)$ since $\Sigma\Lambda=X(X^\top Y)Y^\top$. We also have $X^\top Y=(X^\top X)^{-1}X^\top(\Sigma\Lambda)Y(Y^\top Y)^{-1}$ so $\rk(X^\top Y)\ls\rk(\Sigma\Lambda)$. Finally, $\rk(\Sigma\Lambda)=\rk(X^\top Y)=\rk(H)$. We denote it $r=\rk(\Sigma\Lambda)$.
    
    \begin{enumerate}
        \item As stated in \cite{Massart20}, if $r=k$, then there exists a unique logarithm of $\Sigma$ from $\Lambda$. Moreover, we can compute an explicit expression. Indeed, $R=H^{-1}X^\top Y$ so $XRY^\top=XH^{-1}X^\top\Lambda=X(X^\top\Lambda X)^{-1/2}X^\top\Lambda$. Since the choice of $X$ is free, let us take $X=UD^{1/2}$ where $\Sigma=UDU^\top$ with $U\in\St(n,k)$ and $D\in\Diag^+(k)$. Therefore:
        \begin{align*}
            XRY^\top&=UD^{1/2}(D^{1/2}U^\top\Lambda UD^{1/2})^{-1/2}D^{1/2}U^\top\Lambda\\
            &=UD^{1/2}U^\top U((D^{1/2}U^\top\Lambda UD^{1/2})^{1/2})^{-1}U^\top UD^{1/2}U^\top\Lambda\\
            &=\Sigma^{1/2}(U(D^{1/2}U^\top\Lambda UD^{1/2})^{1/2}U^\top)^-\Sigma^{1/2}\Lambda\\
            &=\Sigma^{1/2}((UD^{1/2}U^\top\Lambda UD^{1/2}U^\top)^{1/2})^-\Sigma^{1/2}\Lambda\\
            &=\Sigma^{1/2}((\Sigma^{1/2}\Lambda\Sigma^{1/2})^{1/2})^-\Sigma^{1/2}\Lambda.
        \end{align*}
        So the unique minimizing geodesic joining $\Sigma$ to $\Lambda$ writes:
        \begin{equation*}
            \gamma_{(\Sigma,\Lambda)}(t)=(1-t)^2\Sigma+t^2\Lambda+2t(1-t)\sym(\Sigma^{1/2}((\Sigma^{1/2}\Lambda\Sigma^{1/2})^{1/2})^-\Sigma^{1/2}\Lambda).
        \end{equation*}
        
        \item If $r=k-1$, without loss of generality, let us assume that $X^\top Y\in\Diag(k)$, $X^\top Y=\Diag(d_1,...,d_{k-1},0)$. Then $H\gs 0$ and $H^2=(X^\top Y)^2$ imposes that $H=\Diag(|d_1|,...,|d_{k-1}|,0)$. Therefore, there are only two matrices $R_\pm\in\Orth(n)$ defined by $R_\pm=\Diag(\mathrm{sgn}(d_1),...,\mathrm{sgn}(d_{k-1}),\pm 1)$ such that $X^\top Y=HR_\pm$. Thus there are exactly two logarithms of $\Lambda$ from $\Sigma$.
        
        \item If $r<k-1$, similarly we can assume without loss of generality that $X^\top Y=\Diag(d_1,...,d_r,0,...,0)$. Then $H=\Diag(|d_1|,...,|d_r|,0,...,0)$ and $R=\Diag(\varepsilon,R_0)$ is a block-diagonal matrix with $R_0\in\Orth(k-r)$ and $\varepsilon=\Diag(\mathrm{sgn}(d_1),...,\mathrm{sgn}(d_r))\in\Diag(r)$. Thus there is an infinity of logarithms of $\Lambda$ from $\Sigma$.
    \end{enumerate}
    Thus the logarithm map is defined on $\mc{U}_\Sigma=\{\Lambda\in\Sym^+(n,k)|\,\rk(\Sigma\Lambda)=k\}$, as stated in \cite{Massart20}.
    
    \item[3.] (Cut time) Let $t\in I_{\Sigma,V}\cap\R_+$. Let $\Lambda=\gamma_{(\Sigma,V)}(t)$, $Y\in\R^{n\times k}_*$ such that $YY^\top=\Lambda$, $(H,R)\in\Sym(k)\times\Orth(k)$ such that $X^\top Y=HR$ and $V^\#_X=YR^\top-X$. Then, $X^\top X+tX^\top V^\#_X=X^\top YR^\top=H$. Besides, $X^\top X+tX^\top V^\#=(X^\top X)^{1/2}(I_n+tS^0_{X,V})(X^\top X)^{1/2}$. Let $\lambda_\mini=\min\sp(S^0_{X,V})=\min\sp(S_{\Sigma,V})$. Therefore, $\gamma_{(\Sigma,V)}$ is minimizing on $[0,t]$ if and only if $H\gs 0$ if and only if $I_n+tS^0_{X,V}\gs 0$ if and only if $1+t\lambda_\mini\gs 0$. If $\lambda_\mini\gs 0$, the condition is empty so $t_{cut}(\Sigma,V)=+\infty$. If $\lambda_\mini<0$, the condition writes $t\ls-\frac{1}{\lambda_\mini}$ so $t_{cut}(\Sigma,V)=-\frac{1}{\lambda_\mini}$.
\end{enumerate}
\end{proof}

\subsection{Theorem \ref{thm:minimizing_geodesics}}\label{subsec:proof:thm:minimizing_geodesics}

(\chixthmmingeodtitle) \chixthmmingeod

\begin{proof}[Proof of Theorem \ref{thm:minimizing_geodesics}]
~\\
(Necessity) Let $\gamma:[0,1]\lto\Cov(n)$ be a minimizing geodesic segment from $\Sigma=\gamma(0)$ to $\Lambda=\gamma(1)$. Let $p=\max_{t\in[0,1]}\rk(\gamma(t))$. By Lemma \ref{lemma:rank}, 
$\gamma$ is of constant rank $p\gs\max(k,l)$ on $(0,1)$. In other words, $\gamma_{|(0,1)}:(0,1)\lto\Sym^+(n,p)$ is a minimizing geodesic of $\Sym^+(n,p)$. Let $c_0:(0,1)\lto\R^{n\times p}_*$ be a horizontal lift of $\gamma_{|(0,1)}$. Necessarily, $c_0(t)=(1-t)X_0+tY_0$ with $X_0,Y_0\in\R^{n\times p}$ with $X_0X_0^\top=\Sigma$ and $Y_0Y_0^\top=\Lambda$ since $(X_0,\Sigma)=\lim_{t\to 0}(c_0(t),\gamma(t))$ and $(Y_0,\Lambda)=\lim_{t\to 1}(c_0(t),\gamma(t))$. Let us show that $X_0^\top Y_0\in\Cov(p)$. 

For all $[a,b]\subset(0,1)$, the tangent vectors $V^\#_{c_0(a)}=(b-a)(Y_0-X_0)\in T_{c_0(a)}\R^{n\times p}_*=\R^{n\times p}$ and $V=c_0(a)(V^\#_{c_0(a)})^\top+V^\#_{c_0(a)}c_0(a)^\top\in\mc{L}og_{\gamma(a)}(\gamma(b))\subset T_{\gamma(a)}\Sym^+(n,p)$ uniquely determine a pair of matrices $R_{a,b}\in\mc{I}^{\mc{L}og}_{c_0(a),c_0(b)}$ and $H_{a,b}=H_{c_0(a),c_0(b),R_{a,b}}=c_0(a)^\top c_0(b)\in\Cov(p)$. We compute $H_{a,b}$:
\begin{align*}
    H_{a,b}&=c_0(a)^\top c_0(b)=[(1-a)X_0+aY_0]^\top[(1-b)X_0+bY_0]\\
    &=(1-a)(1-b)X_0^\top X_0+abY_0^\top Y_0+(1-a)bX_0^\top Y_0+a(1-b)Y_0^\top X_0.
\end{align*}
Therefore, $\lim_{\substack{a\to 0\\b\to 1}}H_{a,b}=X_0^\top Y_0$ so $X_0^\top Y_0\in\Cov(p)$.

Since $[X_0~0][X_0~0]^\top=\Sigma$ and $[Y_0~0][Y_0~0]^\top=\Lambda$, there exist $P,Q\in\Orth(n)$ such that $X=[X_0~0]P$ and $Y=[Y_0~0]Q$. Thus the curve $c:t\in[0,1]\lmto(1-t)X+tYR^\top\in\Mat(n)$ with $R=P^\top Q\in\Orth(n)$ satisfies $\gamma(t)=c(t)c(t)^\top$ for all $t\in[0,1]$. Indeed, it is equal to $c_0(t)c_0(t)^\top$ on $(0,1)$ and the equality is clear for $t\in\{0,1\}$. Moreover, $H_{X,Y,R}=X^\top YR^\top=U^\top\Diag(X_0^\top Y_0,0)U\in\Cov(n)$ so $H_{X,Y,R}=((X^\top Y)(X^\top Y)^\top)^{1/2}=(X^\top\Lambda X)^{1/2}$. Finally, $\gamma(t)=c(t)c(t)^\top=(1-t)^2\Sigma+t^2\Lambda+2t(1-t)\,\sym(XRY^\top)$.\\

(Sufficiency) Let $\gamma_{(\Sigma,\Lambda)}^R(t)=(1-t)^2\Sigma+t^2\Lambda+2t(1-t)\sym(XRY^\top)$ with $H=H_{X,Y,R}=X^\top YR^\top\in\Cov(n)$ and let us prove that it is a minimizing geodesic segment. We define $W=YR^\top-X$ and $c(t)=X+tW=(1-t)X+tYR^\top$ for $t\in[0,1]$. The curve $c$ is a geodesic of $\Mat(n)$ such that $c(t)c(t)^\top=\gamma_{(\Sigma,\Lambda)}^R(t)$ for all $t\in[0,1]$. Moreover, $L(c)=\|W\|=\tr(XX^\top+YY^\top-2X^\top YR^\top)^{1/2}=\tr(\Sigma+\Lambda-2H)^{1/2}$. Let $Q\in\Orth(n)$ such that $X=\Sigma^{1/2}Q$. Since $H\gs 0$, $H=(X^\top\Lambda X)^{1/2}=Q^\top(\Sigma^{1/2}\Lambda\Sigma^{1/2})^{1/2}Q$. Therefore, $L(c)=\|YR^\top-X\|=d^{\mathrm{BW}}(\Sigma,\Lambda)$. In other words, $c:[0,1]\lto\Cov(n)$ is a minimizing curve between two registered points $X$ and $YR^\top$ so by Lemma \ref{lemma:length_quotient}, 
its projection $\gamma:[0,1]\lto\Cov(n)$ is a minimizing curve and $L(\gamma)=L(c)=d^{\mathrm{BW}}(\Sigma,\Lambda)$.

By Lemma \ref{lemma:rank} 
again, $\gamma$ has constant rank $p\gs\max(k,l)$ on $(0,1)$ so $\gamma_{|(0,1)}:(0,1)\lto\Sym^+(n,p)$ is a minimizing curve of $\Sym^+(n,p)$. Since $c_{|(0,1)}$ has constant speed, so does $\gamma_{|(0,1)}$. By continuity of the length, $\gamma$ has constant speed on $[0,1]$ so $\gamma:[0,1]\lto\Cov(n)$ is a minimizing geodesic segment.
\end{proof}

\subsection{Lemma \ref{lemma:bures-wasserstein_elem_algebra}}\label{subsec:proof:lemma:bures-wasserstein_elem_algebra}

(\chixlemmaelemalgtitle) \chixlemmaelemalg

\begin{proof}[Proof of Lemma \ref{lemma:bures-wasserstein_elem_algebra}]
\begin{enumerate}
    \item Let $X_0\in\R^{n\times k}_*$ and $Y_0\in\R^{n\times l}_*$ such that $X_0X_0^\top=\Sigma$ and $Y_0Y_0^\top=\Lambda$. Thus there exist $P,Q\in\Orth(n)$ such that $X=[X_0~0]P$ and $Y=[Y_0~0]Q$. Since $\Sigma\Lambda=X(X^\top Y)Y^\top$, we have $r\ls\rk(X^\top Y)=\rk(X_0^\top Y_0)$. Since $X_0^\top Y_0=(X_0^\top X_0)^{-1}X_0^\top\Sigma\Lambda Y_0(Y_0^\top Y_0)^{-1}$, we have $\rk(X_0^\top Y_0)\ls\rk(\Sigma\Lambda)$. Finally, $r=\rk(X_0^\top Y_0)=\rk(X^\top Y)$.
    
    \item Let $f,g:\R^n\lto\R^n$ be linear endomorphisms respectively represented by $\Sigma$ and $\Lambda$ is the canonical basis. From the rank-nullity theorem applied to the linear map $f_{|\im(g)}:\im(g)\lto\R^n$, i.e. the restriction of $f$ to $\im(g)$, and since $\im(f_{|\im(g)})=\im(f\circ g)$ and $\ker(f_{|\im(g)})\subseteq\ker(f)$, we have $\rk(g)=\rk(f_{|\im(g)})+\dim\ker(f_{|\im(g)})\ls\rk(f\circ g)+\dim(\ker f)=\rk(f\circ g)+n-\rk(f)$. This writes $l-r\ls n-k$.
\end{enumerate}
\end{proof}

\subsection{Theorem \ref{thm:number_bw_minimizing_geodesics}}\label{subsec:proof:thm:number_bw_minimizing_geodesics}

(\chixthmnbbwgeodtitle) Let $\Sigma,\Lambda\in\Cov(n)$ with $\rk(\Sigma)=k$ and $\rk(\Lambda)=l$. We assume that $k\gs l$ without loss of generality. We denote $r=\rk(\Sigma\Lambda)$. We have $l-r\ls n-k$.
\begin{enumerate}
    \itemsep0em
    \item There exists a bijection between the set of minimizing geodesics from $\Sigma$ to $\Lambda$ and the closed unit ball of $\R^{(k-r)\times(l-r)}$ for the spectral norm $\bar{\mc{B}}_{\mathrm{S}}(0,1)=\{R_0\in\R^{(k-r)\times(l-r)}|\,\|R_0\|_{\mathrm{S}}\ls 1\}=\{R_0\in\R^{(k-r)\times(l-r)}|\,0\ls R_0^\top R_0\ls I_{l-r}\}$.
    \item The minimizing geodesic is unique if and only if $r=l$. This includes the cases $k=n$.
    \item There is an infinite number of minimizing geodesics if and only if $r<l$.
    \item The minimizing geodesics corresponding to the choices $R_0\in\St(k-r,l-r)$ (including the empty matrix if $r=l$) have rank exactly $k$ on $[0,1)$ (on $[0,1]$ if $l=k$). Note that $\St(k-r,l-r)$ is included in the unit sphere $\mc{S}_{\mathrm{S}}(0,1)=\{R_0\in\R^{(k-r)\times(l-r)}|\,\|R_0\|_{\mathrm{S}}=1\}$.
    \item The minimizing geodesic corresponding to the choice $R_0=0$ (or the empty matrix if $r=l$) writes for all $t\in[0,1]$:
    \small
    \begin{equation*}
        \gamma^0_{\Sigma\to\Lambda}(t)=(1-t)^2\Sigma+t^2\Lambda+2t(1-t)\,\sym(\Sigma^{1/2}((\Sigma^{1/2}\Lambda\Sigma^{1/2})^{1/2})^-\Sigma^{1/2}\Lambda).
    \end{equation*}
    \normalsize
    If $r=l$, it has rank exactly $k$ on $[0,1)$.
\end{enumerate}
The number of minimizing geodesic segments in $\Sym^+(n,k)$ and in $\Cov(n)$ is summarized in Table \ref{tab:minimizing_geodesics} 
with $n\gs k\gs l\gs r$.
\begin{center}
\begin{tabular}{|c|c|c|c|c|}
    \hline
    \multirow{2}{*}{$\Sigma\in$} & \multirow{2}{*}{$\Lambda\in$} & \multirow{2}{*}{$r=\rk(\Sigma\Lambda)$} & \multicolumn{2}{c|}{Number of minimizing geodesics}\\
    \cline{4-5}
     &  &  & $~\mathrm{in~}\Sym^+(n,k)~$ & $\mathrm{in~}\Cov(n)$\\
    \hline
    $\Sym^+(n)$ & $\Sym^+(n)$ & $n$ & $1$ & $1$\\
    \hline
    $\Sym^+(n)$ & $\Sym^+(n,k)$ & $k$ & $1$ & $1$\\
    \hline
    \multirow{3}{*}{$\Sym^+(n,k)$} & \multirow{3}{*}{$\Sym^+(n,k)$} & $k$ & $1$ & $1$\\
    \cline{3-5}
    & & $k-1$ & $2$ & $\infty$\\
    \cline{3-5}
    & & $<k-1$ & $\infty$ & $\infty$\\
    \hline
    \multirow{2}{*}{$\Sym^+(n,k)$} & \multirow{2}{*}{$\Sym^+(n,l)$} & $l$ & $1$ & $1$\\
    \cline{3-5}
    & & $<l$ & $\infty$ & $\infty$\\
    \hline
\end{tabular}\\
\vspace{3mm}
\hbox{Table \ref{tab:minimizing_geodesics}:
Number of Bures-Wasserstein minimizing geodesics ($n\gs k\gs l\gs r$)}
\end{center}

\begin{proof}[Proof of Theorem \ref{thm:number_bw_minimizing_geodesics}]
\begin{enumerate}
    \item Without loss of generality, let us choose $X,Y\in\Mat(n)$ with an convenient form. We choose $X_0\in\R^{n\times k}_*$ and $Y_0\in\R^{n\times l}_*$ as in the proof of the previous lemma. Given a singular value decomposition of $X_0^\top Y_0=U_kDV_l^\top$ with $U_k\in\Orth(k)$, $V_l\in\Orth(l)$ and $D=\Diag(D_r,0)$ with $D_r\in\Diag^+(r)$, we define $P=\Diag(U_k,I_{n-k})\in\Orth(n)$ and $Q=\Diag(V_l,I_{n-l})\in\Orth(n)$. We choose $X=[X_0~0]P=[X_k~0]$ with $X_k=X_0U_k\in\R^{n\times k}_*$ and $Y=[Y_0~0]Q=[Y_l~0]$ with $Y_l=Y_0V_l\in\R^{n\times l}_*$. Therefore we have $XX^\top=\Sigma$, $YY^\top=\Lambda$, $X=[X_k~0]$, $Y=[Y_l~0]$ and $X^\top Y=\Diag(D_r,0)$. We denote $X=[X_r~X_{k-r}~0]$ and $Y=[Y_r~Y_{l-r}~0]$ with $X_r,Y_r\in\R^{n\times r}_*$, $X_{k-r}\in\R^{n\times(k-r)}$ and $Y_{l-r}\in\R^{n\times(l-r)}$.
    
    Necessarily, $H=(X^\top YY^\top X)^{1/2}=\Diag(D_r,0)$. The possible $R\in\Orth(n)$ such that $HR=X^\top Y$ are $R=\Diag(I_r,R_{n-r})$ with $R_{n-r}=[R_{l-r}~R_{n-l}]\in\Orth(n-r)$ where $R_{l-r}\in\St(n-r,l-r)$ and $R_{n-l}\in\St(n-r,n-l)$. We denote $R_{l-r}=\begin{pmatrix}R_0\\R_1\end{pmatrix}$ with $R_0\in\R^{(k-r)\times(l-r)}$ and $R_1\in\R^{(n-k)\times(l-r)}$ with $R_0^\top R_0+R_1^\top R_1=I_{l-r}$. Note that both $R_0$ and $R_1$ have more rows than columns. 
    
    A simple calculus gives $XRY^\top=X_rY_r^\top+X_{k-r}R_0Y_{l-r}^\top$. Given $R,R'$ satisfying $HR=HR'=X^\top Y$, we have $XRY^\top=XR'Y^\top$ if and only if $R_0=R'_0$ since $X_{k-r}^\top X_{k-r}$ and $Y_{l-r}^\top Y_{l-r}$ are invertible. We even have $\sym(XRY^\top)=\sym(XR'Y^\top)$ if and only if $R_0=R'_0$. Indeed if $\sym(XRY^\top)=\sym(XR'Y^\top)$, then $X_{k-r}(R_0-R_0')Y_{l-r}^\top=Y_{l-r}(R_0'-R_0)^\top X_{k-r}^\top$. Since $X_{k-r}^\top Y_{l-r}=0$, it suffices to multiply on the left by $X_{k-r}^\top$ and on the right by $Y_{l-r}$ to conclude that $R_0=R'_0$. Thus there is a bijection between minimizing geodesic segments and submatrices $R_0\in\R^{(k-r)\times(l-r)}$ of $R_{l-r}=\begin{pmatrix}R_0\\R_1\end{pmatrix}\in\St(n-r,l-r)$. Since $R_1$ has more rows than columns, any $R_0\in\R^{(k-r)\times(l-r)}$ such that $R_0^\top R_0\ls I_{l-r}$ can be completed by an appropriate $R_1=\begin{pmatrix}(I_{l-r}-R_0^\top R_0)^{1/2}\\\mathbf{0}_{n-k-(l-r),l-r}\end{pmatrix}$.
    
    Therefore, the minimizing geodesic segments are in bijection with the matrices $R_0\in\R^{(k-r)\times(l-r)}$ such that $R_0^\top R_0\ls I_{l-r}$, that is the closed unit ball for the spectral norm $\bar{\mc{B}}_{\mathrm{S}}(0,1)$.
    
    \item When $r=l$, the component $Y_{l-r}$ of $Y$ is the empty matrix. In other words, the dependence of the minimizing geodesic on $R_0$ vanishes so the minimizing geodesic is unique. In particular when $k=n$, $r=\rk(\Sigma\Lambda)=\rk(\Lambda)=l$.
    
    \item On the contrary, when $r<l$ (thus $n>k\gs l$), there is an infinite number of convenient $R_0$'s. For example, $R_0=\begin{pmatrix}\cos\theta & \mathbf{0}_{1,l-r-1}\\\mathbf{0}_{k-r-1,1} & \mathbf{0}_{k-r-1,l-r-1}\end{pmatrix}$ and $R_1=\begin{pmatrix}\sin\theta & \mathbf{0}_{1,l-r-1}\\\mathbf{0}_{l-r-1,1} & I_{l-r-1}\\\mathbf{0}_{n-k-(l-r),1} & \mathbf{0}_{n-k-(l-r),l-r-1}\end{pmatrix}$.
    
    \item Since $R_0$ has more rows than columns, $R_1$ may be null which means than $R_0\in\St(k-r,l-r)$, that is $R_0^\top R_0=I_{l-r}$. A simple calculus shows than $YR^\top=[Y_r~Y_{l-r}R_0^\top~\mathbf{0}_{n-k}]$. Therefore, the curve $c(t)=(1-t)X+tYR^\top$ has its $n-k$ columns identically null so it has rank less than $k$. But it also has rank at least $k$ because $\rk(\Sigma)=\rk(X)=k$. So $c$ and $\gamma^{R_0}_{\Sigma\to\Lambda}$ are of rank exactly $k$ on $[0,1)$ (and on $[0,1]$ if $l=\rk(\Lambda)=k$).
    
    \item At the other extremity, there is $R_0=0$ (and $R_1\in\St(n-k,l-r)$). For $r=l$, it corresponds to the empty matrix. In this case, $XRY^\top=X_rY_r^\top$. Inspired by the case of the unique geodesic in $\Sym^+(n,k)$ (with $k=l=r$), we notice that $XH^-X^\top YY^\top=XH^-HY^\top=X\Diag(I_r,0)Y^\top=X_rY_r^\top=XRY^\top$. Therefore, denoting $X=UD^{1/2}V^\top$ with $U,V\in\St(n,k)$ and $D\in\Diag^+(k)$, we have:
    \begin{align*}
        XRY^\top&=XH^-X^\top\Lambda\\
        &=UD^{1/2}V^\top((VD^{1/2}U^\top\Lambda UD^{1/2}V^\top)^{1/2})^-VD^{1/2}U^\top\\
        &=UD^{1/2}((D^{1/2}U^\top\Lambda UD^{1/2})^{1/2})^-D^{1/2}U^\top\\
        &=UD^{1/2}U^\top((UD^{1/2}U^\top\Lambda UD^{1/2}U^\top)^{1/2})^-UD^{1/2}U^\top\\
        &=\Sigma^{1/2}((\Sigma^{1/2}\Lambda\Sigma^{1/2})^{1/2})^-\Sigma^{1/2}\Lambda.
    \end{align*}
    Thus the minimizing geodesic writes:
    \begin{equation*}
        \gamma^0_{\Sigma\to\Lambda}(t)=(1-t)^2\Sigma+t^2\Lambda+2t(1-t)\,\sym(\Sigma^{1/2}((\Sigma^{1/2}\Lambda\Sigma^{1/2})^{1/2})^-\Sigma^{1/2}\Lambda).
    \end{equation*}
    When the geodesic is unique, i.e. when $r=l$, i.e. when $R_0$ is the empty matrix, it has rank exactly $k$ on $[0,1)$.
\end{enumerate}
\end{proof}

\end{appendices}

\bibliographystyle{siamplain}
\bibliography{references}

\begin{thebibliography}{10}

\bibitem{Alekseevsky03}
{\sc D.~Alekseevsky, A.~Kriegl, M.~Losik, and P.~W. Michor}, {\em {The
  Riemannian geometry of orbit spaces. The metric, geodesics, and integrable
  systems}}, Publicationes Mathematicae, 62 (2001), pp.~1--30.

\bibitem{Arsigny06}
{\sc V.~Arsigny, P.~Fillard, X.~Pennec, and N.~Ayache}, {\em {Log-Euclidean
  metrics for fast and simple calculus on diffusion tensors}}, {Magnetic
  Resonance in Medicine}, 56 (2006), pp.~411--421.

\bibitem{Bhatia19}
{\sc R.~Bhatia, T.~Jain, and Y.~Lim}, {\em On the {Bures}–{Wasserstein}
  distance between positive definite matrices}, Expositiones Mathematicae, 37
  (2019), pp.~165--191.

\bibitem{Billera01}
{\sc L.~J. Billera, S.~P. Holmes, and K.~Vogtmann}, {\em Geometry of the
  {Space} of {Phylogenetic} {Trees}}, Advances in Applied Mathematics, 27
  (2001), pp.~733--767.

\bibitem{Bourbaki71}
{\sc N.~Bourbaki}, {\em General {Topology}: {Chapters} 1–4}, Springer,
  Berlin, Heidelberg, 1971.

\bibitem{Bridson99}
{\sc M.~R. Bridson and A.~Haefliger}, {\em Metric {Spaces} of {Non}-{Positive}
  {Curvature}}, vol.~319 of Grundlehren der mathematischen {Wissenschaften},
  Springer Berlin Heidelberg, Berlin, Heidelberg, 1999.

\bibitem{Calissano20}
{\sc A.~Calissano, A.~Feragen, and S.~Vantini}, {\em {Populations of Unlabeled
  Networks: Graph Space Geometry and Geodesic Principal Components}}, 2020.
\newblock MOX Report.

\bibitem{DoCarmo92}
{\sc M.~P. do~Carmo}, {\em Riemannian {Geometry}}, Mathematics: {Theory} and
  {Applications}, Birkhäuser, 1992.

\bibitem{Dowson82}
{\sc D.~Dowson and B.~Landau}, {\em The {F}réchet distance between
  multivariate normal distributions}, Journal of Multivariate Analysis, 12
  (1982), pp.~450--455.

\bibitem{Dryden09}
{\sc I.~L. Dryden, A.~Koloydenko, and D.~Zhou}, {\em {Non-Euclidean statistics
  for covariance matrices, with applications to diffusion tensor imaging}}, The
  Annals of Applied Statistics, 3 (2009), pp.~1102--1123.

\bibitem{Feragen11}
{\sc A.~Feragen, F.~Lauze, P.~Lo, M.~de~Bruijne, and M.~Nielsen}, {\em
  Geometries on spaces of treelike shapes}, in Proceedings of the 10th {Asian}
  conference on {Computer} vision - {Volume} {Part} {II}, {ACCV}'10, Berlin,
  Heidelberg, Nov. 2010, Springer-Verlag, pp.~160--173.

\bibitem{Feragen20}
{\sc A.~Feragen and T.~Nye}, {\em Chapter 8. {Statistics} on stratified
  spaces}, in Riemannian {Geometric} {Statistics} in {Medical} {Image}
  {Analysis}, X.~Pennec, S.~Sommer, and T.~Fletcher, eds., Academic Press, Jan.
  2020, pp.~299--342.

\bibitem{Fillard07}
{\sc P.~Fillard, X.~Pennec, V.~Arsigny, and N.~Ayache}, {\em {Clinical DT-MRI
  estimation, smoothing, and fiber tracking with log-Euclidean metrics.}},
  {IEEE Transactions on Medical Imaging}, 26 (2007), pp.~1472--82.

\bibitem{Fletcher07}
{\sc P.~T. Fletcher and S.~Joshi}, {\em {Riemannian Geometry for the
  Statistical Analysis of Diffusion Tensor Data}}, Signal Processing, 87
  (2007), p.~250–262.

\bibitem{Garba21}
{\sc M.~K. Garba, T.~M.~W. Nye, J.~Lueg, and S.~F. Huckemann}, {\em Information
  geometry for phylogenetic trees}, Journal of Mathematical Biology, 82 (2021),
  p.~19.

\bibitem{HaQuang22}
{\sc M.~H{\`a}~Quang}, {\em Alpha procrustes metrics between positive definite
  operators: a unifying formulation for the bures-wasserstein and
  log-euclidean/log-hilbert-schmidt metrics}, Linear Algebra and its
  Applications, 636 (2022), pp.~25--68.

\bibitem{HaQuang14}
{\sc M.~H\`{a}~Quang, M.~San~Biagio, and V.~Murino}, {\em Log-hilbert-schmidt
  metric between positive definite operators on hilbert spaces}, Advances in
  neural information processing systems, 27 (2014).

\bibitem{Kendall84}
{\sc D.~G. Kendall}, {\em Shape {Manifolds}, {Procrustean} {Metrics}, and
  {Complex} {Projective} {Spaces}}, Bulletin of the London Mathematical
  Society, 16 (1984), pp.~81--121.

\bibitem{Lee12}
{\sc J.~M. Lee}, {\em Introduction to Smooth Manifolds}, Springer, New York,
  NY, 2nd~ed., 2012.

\bibitem{Lenglet06-JMIV}
{\sc C.~Lenglet, M.~Rousson, R.~Deriche, and O.~Faugeras}, {\em Statistics on
  the {Manifold} of {Multivariate} {Normal} {Distributions}: {Theory} and
  {Application} to {Diffusion} {Tensor} {MRI} {Processing}}, Journal of
  Mathematical Imaging and Vision, 25 (2006), pp.~423--444.

\bibitem{Li17}
{\sc P.~Li, Q.~Wang, H.~Zeng, and L.~Zhang}, {\em {Local Log-Euclidean
  Multivariate Gaussian Descriptor and Its Application to Image
  Classification}}, IEEE Transactions on Pattern Analysis and Machine
  Intelligence, 39 (2017), pp.~803--817.

\bibitem{Lin19}
{\sc Z.~Lin}, {\em Riemannian {Geometry} of {Symmetric} {Positive} {Definite}
  {Matrices} via {Cholesky} {Decomposition}}, SIAM Journal on Matrix Analysis
  and Applications, 40 (2019), pp.~1353--1370.

\bibitem{Malago18}
{\sc L.~Malagò, L.~Montrucchio, and G.~Pistone}, {\em Wasserstein {Riemannian}
  geometry of {Gaussian} densities}, Information Geometry, 1 (2018),
  pp.~137--179.

\bibitem{Massart20}
{\sc E.~Massart and P.-A. Absil}, {\em Quotient geometry with simple geodesics
  for the manifold of fixed-rank positive-semidefinite matrices}, SIAM Journal
  on Matrix Analysis and Applications, 41 (2020), pp.~171--198.

\bibitem{Massart19}
{\sc E.~Massart, J.~M. Hendrickx, and P.-A. Absil}, {\em Curvature of the
  {Manifold} of {Fixed}-{Rank} {Positive}-{Semidefinite} {Matrices} {Endowed}
  with the {Bures}–{Wasserstein} {Metric}}, in Proceedings of GSI 2019 - 4th
  conference on Geometric Science of Information, F.~Nielsen and F.~Barbaresco,
  eds., vol.~11712 of {Lecture Notes in Computer Science}, Toulouse, France,
  2019, Springer International Publishing, pp.~739--748.

\bibitem{Michor08}
{\sc P.~W. Michor}, {\em Topics in {Differential} {Geometry}}, vol.~93 of
  Graduate {Studies} in {Mathematics}, American Mathematical Soc., 2008.

\bibitem{Michor00}
{\sc P.~W. Michor, D.~Petz, and A.~Andai}, {\em {The Curvature of the
  Bogoliubov-Kubo-Mori Scalar Product on Matrices}}, {Infinite Dimensional
  Analysis, Quantum Probability and Related Topics}, 3 (2000), pp.~1--14.

\bibitem{Moakher05}
{\sc M.~Moakher}, {\em A {Differential} {Geometric} {Approach} to the
  {Geometric} {Mean} of {Symmetric} {Positive}-{Definite} {Matrices}}, SIAM
  Journal on Matrix Analysis and Applications, 26 (2005), pp.~735--747.

\bibitem{Olkin82}
{\sc I.~Olkin and F.~Pukelsheim}, {\em The distance between two random vectors
  with given dispersion matrices}, Linear Algebra and its Applications, 48
  (1982), pp.~257--263.

\bibitem{ONeill66}
{\sc B.~O'Neill}, {\em {The fundamental equations of a submersion.}}, Michigan
  Mathematical Journal, 13 (1966), pp.~459--469.

\bibitem{Paulin14}
{\sc F.~Paulin}, {\em {Groupes et géométries}}, 2014.

\bibitem{Pennec06}
{\sc X.~Pennec, P.~Fillard, and N.~Ayache}, {\em A {Riemannian} {Framework} for
  {Tensor} {Computing}}, International Journal of Computer Vision, 66 (2006),
  pp.~41--66.

\bibitem{Petz93}
{\sc D.~Petz and G.~Toth}, {\em The {Bogoliubov} inner product in quantum
  statistics}, Letters in Mathematical Physics, 27 (1993), pp.~205--216.

\bibitem{Siegel43}
{\sc C.~L. Siegel}, {\em {Symplectic Geometry}}, {American J. of Mathematics},
  65 (1943), pp.~1--86.

\bibitem{Skovgaard84}
{\sc L.~T. Skovgaard}, {\em {A Riemannian Geometry of the Multivariate Normal
  Model}}, Scandinavian Journal of Statistics, 11 (1984), pp.~211--223.

\bibitem{Takatsu10}
{\sc A.~Takatsu}, {\em {On Wasserstein geometry of Gaussian measures}}, in
  Probabilistic Approach to Geometry, M.~Kotani, M.~Hino, and T.~Kumagai, eds.,
  vol.~57 of Advanced Studies in Pure Mathematics, Kyoto University, Japan,
  2010, Mathematical Society of Japan, pp.~463--472.

\bibitem{Takatsu11}
{\sc A.~Takatsu}, {\em {Wasserstein geometry of Gaussian measures}}, Osaka
  Journal of Mathematics, 48 (2011), pp.~1005--1026.

\bibitem{Thanwerdas21-GSI}
{\sc Y.~Thanwerdas and X.~Pennec}, {\em {Geodesics and Curvature of the
  Quotient-Affine Metrics on Full-Rank Correlation Matrices}}, in {Proceedings
  of GSI 2021 - 5th conference on Geometric Science of Information}, vol.~12829
  of Lecture Notes in Computer Science, Paris, France, July 2021, Springer
  International Publishing, pp.~93--102.

\bibitem{Thanwerdas22-LAA}
{\sc Y.~Thanwerdas and X.~Pennec}, {\em O(n)-invariant {Riemannian} metrics on
  {SPD} matrices}.
\newblock Preprint, Sept. 2022.

\bibitem{Thanwerdas22-SIMAX}
{\sc Y.~Thanwerdas and X.~Pennec}, {\em Theoretically and computationally
  convenient geometries on full-rank correlation matrices}.
\newblock Preprint, Jan. 2022.

\bibitem{Oostrum20}
{\sc J.~van Oostrum}, {\em {Bures-Wasserstein Geometry}}, 01 2020.
\newblock ArXiv e-prints, submitted.

\end{thebibliography}

\end{document}